%% file: euclidean_splitting_arxiv.tex
\documentclass[titlepage]{amsart}

\input{macros_kmx.tex}

\usepackage{amsaddr}

\usepackage{verbatim}

\usepackage{tikz}

\definecolor{fg}{rgb}{0.136,0.556,0.136}
\usepackage{xfrac}

\usepackage[makeroom]{cancel}
\newcommand{\g}{\nabla }

\title[Sobolev mappings and product structure]
{ Sobolev mappings of Euclidean space and product structure}

\author[B. Kleiner]{Bruce Kleiner}
\thanks{BK was supported by NSF grants DMS-1405899 and DMS-1711556 and a Simons Collaboration grant.}
\address{Courant Institute of Mathematical Sciences, New York University, USA}

\author[S. M\"uller]{Stefan M\"uller}
\thanks{SM has been supported by the Deutsche Forschungsgemeinschaft (DFG, German Research Foundation) through
the Hausdorff Center for Mathematics (GZ EXC 59 and 2047/1, Projekt-ID 390685813) and the 
collaborative research centre  {\em The mathematics of emerging effects} (CRC 1060, Projekt-ID 211504053).  This work was initiated during a sabbatical of SM at the Courant Institute and SM would like to thank  R.V. Kohn and the Courant Institute
members and staff for 
their  hospitality and a very inspiring atmosphere.}
\address{Institute for Applied Mathematics, University of Bonn, Germany}

\author[L. Sz{\'{e}}kelyhidi]{ L\'{a}szl\'{o} Sz{\'{e}}kelyhidi, Jr.}
\thanks{   LSz gratefully acknowledges the support of the Deutsche Forschungsgemeinschaft (DFG, German Research Foundation) through GZ SZ 325/2-1.}
\address{Max Planck Institute for Mathematics in the Sciences, Leipzig, Germany}

\author[X. Xie]{Xiangdong Xie}
\thanks{XX has been supported by Simons Foundation grant \#315130.}
\address{Department of Mathematics and Statistics, Bowling Green State University, USA}

\begin{document}

\begin{abstract}  
 We consider bounded open connected sets $\Omega_1, \Omega_2 \subset \R^n$ and 
 Sobolev maps $f: \Omega_1 \times \Omega_2 \subset \R^n \times \R^n$ such that for almost every $x \in \Omega_1 \times \Omega_2$
the weak differential $\g f(x)$ is invertible  and  preserves or swaps the spaces $\R^n \times \{0\}$ and $\{0\} \times \R^n$.  
We show that if 
 $n \ge 2$ and $f \in W^{1,2}$  
then $f$ is split, i.e., $f(x_1, x_2) = (f_1(x_1), f_2(x_2))$
or $f(x_1, x_2) = (f_2(x_2), f_1(x_1))$. 

We also show that this conclusion fails in general for $n=1$, even if we assume in addition that $f$ is bi-Lipschitz
and area preserving. 
These results complement the work \cite{kmsx_counterexample}, where we showed that the conclusion fails for $n \ge 2$ if the Sobolev space $W^{1,2}$ is replaced by $W^{1,p}$
for any $p < 2$. 

We also discuss results for approximately split maps, i.e.~for  sequences of  maps $f_k$
such that $\g f_k$ approaches the set of linear invertible split maps in suitable $L^p$ spaces.

This work is partly motivated by the question whether Sobolev maps defined on products of Carnot groups are split, see \cite{KMX1new}.
\end{abstract}

\maketitle

\setcounter{tocdepth}{2}
\tableofcontents

\section{Introduction}~

If $X_1,X_2\subset \R^n$ are subsets, we say that a mapping  $f:X_1\times X_2\ra \R^{2n}$ is split (or preserves product structure) if there exist functions $f_1 : X_1 \to  \R^n$ and $f_2 : X_2 \to \R^n$
  such that either $f(x,y) = (f_1(x),f_2(y))$ for all $(x,y) \in X_1\times X_2$
  or $ f(x,y)=(f_2(y), f_1(x)) $ for all   $(x,y) \in X_1\times X_2$. As in our previous work  \cite{kmsx_counterexample}, we are interested in the following question about mappings $f:\Om_1\times\Om_2\ra \R^{2n}$, where $\Om_1,\Om_2\subset \R^n$ are connected open subsets and $f$ is assumed to be either Lipschitz, bi-Lipschitz, or in $W^{1,p}_{\loc}$ for some $1\leq p<\infty$.

\begin{question}  \label{qu:global_split}  
If the (approximate) differential $\g f(x)$ is  split and invertible for almost every $x\in \Om$, is $f$ split?  More generally, if the differential is ``approximately split'', must $f$ itself be ``approximately split''?
\end{question}
\noindent
Our motivation for considering this question comes   from   geometric group theory, geometric mapping theory, and the theory of nonlinear partial differential equations; see the end of the introduction for discussion of this context. 

From now on we fix two connected open subsets $\Om_1,\Om_2\subset \R^n$, and let $\Om:=\Om_1\times\Om_2$. 

Note that Question~\ref{qu:global_split} is trivial for $C^1$ maps: if $f:\Om\ra \R^{2n}$ is $C^1$  and the differential $\g f(x)$ is bijective and split everywhere, then $f$ is clearly split, 
since $\g f:\Om\ra \R^{2n\times 2n}$ is a continuous map taking values in the set of split and bijective linear maps, which consists of two components -- the block diagonal and the block anti-diagonal invertible matrices. On the other hand, if $f:\Om\ra \R^{2n}$ is Lipschitz then its differential is only measurable, so in principle oscillations between the two types of behavior might arise.  In fact, for $n = 1$ it is easy to find Lipschitz maps such that $\g f$  is bijective and split a.e., but $f$ is not split. For instance consider the `folding map'
$$f(x_1,x_2)=  \frac12  \binom{x+y + h(x-y)}{x+y - h(x-y)}$$
where $h : \R\to  \R$ is a Lipschitz function with $h' =  \pm 1$ a.e.
 (for a specific example one may take $h(t) = |t|$). 
 Then  
$$
\g f(x) = \begin{pmatrix} 1 & 0 \\ 0 & 1 \end{pmatrix} \quad \text{or}
\quad 
\g f(x) = \begin{pmatrix} 0 & 1 \\ 1 & 0 \end{pmatrix} 
$$
for almost every $x\in \R^2$ but  $f$ is not split unless $h' \equiv 1$ or $h' \equiv -1$. 
This example reflects the fact that for $n = 1$ the set of split, bijective linear maps $\R^2\ra \R^2$ contains rank-one connections
 between the diagonal and antidiagonal matrices; that is, there exists a diagonal matrix and an antidiagonal matrix whose difference has rank one.  When $n\geq 2$, no such rank-one connections exist between the block diagonal and block antidiagonal invertible matrices, so no analogs of the folding examples exist, and hence one might expect a positive answer to Question~\ref{qu:global_split} for Lipschitz maps. 
For a similar reason one might expect a positive answer to Question~\ref{qu:global_split} for a bi-Lipschitz mapping $f:\Om\ra\R^2$: the sign of $\det \g f(x)$ is constant almost everywhere (because it agrees with the local degree of $f$) and there are no rank-one connections between diagonal and antidiagonal matrices whose determinant has the same sign. 

\subsection{Our results}

Our first result confirms the expectation of rigidity in the $n\geq 2$ case, even for Sobolev mappings, as announced in \cite{kmsx_counterexample}:
\begin{theorem}  \label{th:split}
 Suppose $n\geq 2$ and  $f  \in  W^{1,2}_{\loc}(\Omega;\R^n)$. If the weak differential $\g f(x)$  is split and bijective for a.e.~$x \in \Omega$, then $f$ is split.
 \end{theorem}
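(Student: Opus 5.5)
Write $\g f=\begin{pmatrix}A&B\\C&D\end{pmatrix}$ in $n\times n$ blocks, where $A=\g_{x_1}f^1$, $B=\g_{x_2}f^1$, $C=\g_{x_1}f^2$, $D=\g_{x_2}f^2$ and $f=(f^1,f^2)$. The hypothesis says that a.e.\ either $B=C=0$ with $A,D$ invertible (diagonal type), or $A=D=0$ with $B,C$ invertible (antidiagonal type). Let $E\subset\Om$ be the set where $\g f$ is diagonal. The plan has two steps. First show that $\chi_E$ is a.e.\ equal to a constant. Then, in the case $|\Om\setminus E|=0$, show that $f^1$ depends only on $x_1$ and $f^2$ only on $x_2$; the antidiagonal case is symmetric. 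The second step is elementary. If $\g_{x_2}f^1=0$ a.e., then for a.e.\ $x_1$ the slice $f^1(x_1,\cdot)\in W^{1,1}_{\loc}(\Om_2)$ has zero gradient. Since $\Om_2$ is connected, the slice is constant, so $f^1(x_1,x_2)=f_1(x_1)$ (Fubini, and the characterization of Sobolev functions by a.e.\ absolutely continuous lines). The same argument applies to $f^2$.

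The main work is the first step, i.e.\ ruling out oscillation between the two types. I would use a minor/null-Lagrangian argument that relies on $W^{1,2}$. For $n\ge2$, pick indices $i,j$ in the first factor and consider $2\times2$ minors of $\g f^1$ with respect to variables $(x_1^a, x_2^b)$ (one from each factor). On the diagonal set these mixed minors of $f^1$ vanish because $B=0$. On the antidiagonal set they vanish because $A=0$. So every "mixed" minor $\det \partial_{(x_1^a,x_2^b)}(f^1_i,f^1_j)$ vanishes a.e. Now use the pure minors. For $k,l$ in the first factor, the minor $M^{11}_{kl}=\det\partial_{(x_1^a,x_1^{a'})}(f^1_k,f^1_l)$ is a $2\times2$ minor of $A$. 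Similarly $M^{22}$ is a minor of $B$ taken in the $x_2$ variables. Since $f\in W^{1,2}$, these minors are in $L^1$ and equal their distributional versions $\partial_a(f_k\partial_{a'}f_l)-\partial_{a'}(f_k\partial_a f_l)$. Null-Lagrangian compatibility (the Piola identity / commutation of distributional Jacobians) then gives differential relations linking them. The key quantity I would look at is the function $g=\sum_{k<l}|M^{11}_{kl}|^2$ or, more cleanly, indicator-like functions built from the nonvanishing of $\det A$ versus $\det B$. Let me be concrete. Take $\phi=\chi_E$. On $E$ some $2\times2$ minor of $A$ (using $n\ge2$) is nonzero, and all those of $B$ vanish. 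On the complement the reverse holds. I would show $\partial_{x_2^b}\phi=0$ and $\partial_{x_1^a}\phi=0$ distributionally. The route is to use the identity $\partial_{x_2^b}\bigl(\text{minor of }A\bigr)=$ divergence-form expressions in the $x_1$ variables of mixed minors, which vanish. This gives that the distributional Jacobian $\det\partial_{x_1}(f^1_k,f^1_l)$ (an $L^1$ function, by $W^{1,2}$) is independent of $x_2$. The analogous statement holds for the $B$-minors and $x_1$.

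Granting this, for a.e.\ $x_1$ the set of $x_2$ with some $A$-minor nonzero is either all or none of $\Om_2$, up to null sets. This is not immediate, since minors can vanish on parts of $E$. To handle that, I would use the whole family of $2\times2$ minors of $A$ together with invertibility: on $E$, $A$ is invertible, so at least one $2\times 2$ minor of $A$ is nonzero. Hence $E=\{x:\exists$ an $A$-minor $\neq0\}$, and this set is a product $S_1\times\Om_2$ up to null sets. Symmetrically, $\Om\setminus E=\Om_1\times S_2$. Two disjoint products covering $\Om$ in measure force one of them to be null, which is the claim. The main obstacle is rigorously justifying the independence of these minors in the transverse variables under only $W^{1,2}$. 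This is exactly where $p=2$ enters: it is the threshold at which $2\times2$ minors are $L^1$ and distributional and pointwise Jacobians coincide. I would first approximate by mollification and pass to limits in the weak formulation of $\det$.
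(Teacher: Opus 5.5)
Your proposal is correct and is essentially the paper's proof. Both rely on the weak closedness of the pulled-back $2$-form $df_k\wedge df_l$ for $W^{1,2}$ maps, obtained by mollification as in Lemma~\ref{le:pullback_closed}. The mixed $(x_1,x_2)$ coefficients vanish a.e.\ on $L$, so the $2\times 2$ minors of $A$ are independent of $x_2$ and those of $B$ are independent of $x_1$; this step uses connectedness of $\Omega_2$, resp.\ $\Omega_1$. Finally, two a.e.\ disjoint products $S_1\times\Omega_2$ and $\Omega_1\times S_2$ that cover $\Omega$ force one of them to be null.

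The only difference is cosmetic. The paper uses the single pair $(f_1,f_2)$ and brings in bijectivity at the end through a rank argument. You instead use invertibility of $A$ on $E$, and of $B$ off $E$, to identify $E$ directly with the set where some $A$-minor is nonzero. This is equally valid, and rows $1,2$ alone already suffice for it.
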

The Sobolev exponent $2$ is sharp:  for every $p < 2$ there exists a $W^{1,p}_{\loc}$-mapping $f:\Om\ra\R^{2n}$ such that $\g f(x)$ is split and  $\det \g f(x) = 1$ for almost every $x\in \Om$,
yet $f$ is not split, see  \cite[Theorem 1.2]{kmsx_counterexample}.  This exhibits the subtle dependence of rigidity/flexibility on the \emph{a priori} regularity assumptions, established for other nonlinear PDEs     \cite{nash54, scheffer74, tartar79, murat81, gromov_pdr, scheffer93,muller_sverak96,dacorogna_marcellini99, muller99,muller_sverak03,Kirchheim:2002wc,astala_faraco_szekelyhidi08,delellis_szekelyhidi09,CDS2012,SzekelyhidiJr:2014tu,delellis_szekelyhidi16,isett18,MR4944781,buckmaster_vicol19}.  

The assumption that $\g f$ is bijective almost everywhere cannot be dropped.
Consider, for example, the Lipschitz  map given by $f_1(x) = x_1 + x_{n+1} + |x_1 - x_{n+1}|$, $f_2 = \ldots = f_{2n} = 0$. This map  satisfies
$\g f \in \{ 2 e_1 \otimes e_1, 2 e_1 \otimes e_{n+1}\}$ a.e., but is not globally split.

\bigskip

For the remainder of the introduction, we assume in addition that our connected open subsets $\Om_1,\Om_2\subset \R^n$ are bounded.

In the $n=1$ case, Question~\ref{qu:global_split} turns out to have a negative answer even for bilipshitz homeomorphisms.

\begin{theorem}  \label{th:nosplit} If  $n = 1$, then 
 there exists a bi-Lipschitz homeomorphism $f : \Om  \to  \R \times  \R$ such that:
\begin{enumerate}[label=(\alph*)]
\item $\g f(x)$ 
is split and bijective for a.e.~$x$;
\item There is a null set $N$ such that $\g f(x)$ takes only five values for $x\not\in N$;
\item  $f$ is area preserving: $\det \g f = 1$ for a.e.~$x$;
\item $f$ agrees with a non-split  affine map on $\partial \Omega$; in particular $f$ is not split.
\end{enumerate}
\end{theorem}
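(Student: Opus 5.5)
The plan is to build $f$ by a convex-integration construction. The rank-one connected "folding" maps from the introduction cannot be used, because in the area-preserving case there are no rank-one connections between the two components. Write
$$
K_+=\Bigl\{\begin{pmatrix} a&0\\0&1/a\end{pmatrix}\Bigr\},\qquad K_-=\Bigl\{\begin{pmatrix} 0&b\\-1/b&0\end{pmatrix}\Bigr\}
$$
for the split matrices of determinant $1$. If $A\in K_+$ and $B\in K_-$, then
$$
\det(A-B)=1+1=2\neq 0,
$$
so $A$ and $B$ are not rank-one connected. A piecewise affine map with finitely many pieces can therefore never mix the two types, and any non-split example must have infinitely many pieces accumulating on a null set. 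This is exactly the situation of Tartar's $T_4$ configurations. I would first look for four matrices $A_1,\dots,A_4\subset K_+\cup K_-$, using both components, that form a $T_4$ configuration. Their rank-one convex hull then contains a nondegenerate "inner" region of matrices that are neither diagonal nor antidiagonal. I would search explicitly in the four-parameter family (two diagonal, two antidiagonal, alternating), using the standard characterization of $T_4$'s: there are points $P_1,\dots,P_4$ and scalars $\kappa_i>1$ with $P_{i+1}-P_i$ rank-one and $A_i=P_i+\kappa_i(P_{i+1}-P_i)$. I would also arrange that the $P_i$ have positive determinant; the determinant is affine along rank-one lines, so this holds automatically on the segments between points of determinant $1$.

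Second, I would fix a boundary matrix $F$ in the interior of the rank-one convex hull with $\det F=1$, or more generally $\det F>0$, and take the square $\Omega$ (or $\Omega_1\times\Omega_2$, two intervals) as domain. I would then run the standard iterative $T_4$ construction in the style of Scheffer and Kirchheim. At step $k$ one has a piecewise affine Lipschitz map with gradients in $\{A_1,\dots,A_4\}\cup\{P_1,\dots,P_4\}$, equal to $Fx$ on $\partial\Omega$. Each region with gradient $P_i$ is replaced by laminate-type pieces: a piece with gradient $A_i$ and a new piece with gradient $P_{i+1}$, glued along rank-one interfaces and with small boundary-layer corrections. The fraction of area still carrying a $P$-gradient decays geometrically, so the maps converge in $W^{1,\infty}$ weak-* and pointwise a.e. of the gradient. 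The limit has $\nabla f\in\{A_1,\dots,A_4\}$ off a null set, is Lipschitz, and equals $Fx$ on $\partial\Omega$; since $F$ is not split, $f$ is not split. I expect the five values in (b) come from one extra matrix needed in the boundary-layer or cutoff pieces, or from the initial step from $F$ to the $P_i$. I would allow one additional split determinant-one matrix for this, chosen so that all gluing interfaces remain rank-one compatible. This gives (a)--(d).

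Third, bi-Lipschitz. Every approximant is piecewise affine with positive-determinant gradients (the $P_i$ and $A_i$), so it is a local homeomorphism away from vertices. It agrees with the affine homeomorphism $x\mapsto Fx$ on $\partial\Omega$, so by the degree argument it is a homeomorphism onto the convex polygon $F(\Omega)$. Passing to the limit, $f$ is Lipschitz, agrees with $Fx$ on the boundary, and satisfies $\det\nabla f=1$ a.e. Because it is a uniform limit of homeomorphisms with uniformly bounded distortion, I would conclude that $f$ is a homeomorphism. The inverse has gradient $(\nabla f)^{-1}$ a.e., which takes values in the finite set of inverses of the five matrices, so it is bounded. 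Since the target $F(\Omega)$ is convex, a uniform bound on the gradient of $f^{-1}$ gives that $f^{-1}$ is Lipschitz.

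I expect the main obstacle to be the first step: actually exhibiting a $T_4$ (or $T_5$) configuration inside the one-parameter-per-component set $K_+\cup K_-$ whose interior contains a matrix that is not split. This is a concrete but delicate algebraic search. I would start from a symmetric ansatz invariant under $(x_1,x_2)\mapsto(x_2,x_1)$ to reduce the number of parameters, and if no $T_4$ exists, I would pass to a $T_5$ configuration, which would also explain the count of five values in (b). A secondary difficulty is making the geometric iteration close up exactly, including the boundary layers, while keeping orientation positive at every step.
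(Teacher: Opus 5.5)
Your second step does not work, and this is a genuine gap. No iteration can produce a non-affine Lipschitz map with $\nabla f\in\{A_1,\dots,A_4\}$ a.e. A Lipschitz map whose gradient takes values a.e.\ in a four-point set without rank-one connections has constant gradient (Chleb\'{\i}k--Kirchheim \cite{Chlebik:2002va}); this is why the paper says a four-element set cannot work in Proposition~\ref{prop:nosplit}.

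Your sketch breaks at the ``small boundary-layer corrections''. Replacing an inner-point gradient by a laminate while keeping affine boundary values forces cut-off layers, whose gradients are perturbations of the laminate gradients. Generic such perturbations lie outside $\{A_1,\dots,A_4\}^{rc}$, so there is nothing to split them into. That hull is contained in $\Sigma$ intersected with the at most three-dimensional affine span of the $A_i$, so it has empty interior relative to $\Sigma$. In particular there is no interior point $F$ of the hull, which your second step presupposes. The errors never disappear, and the $T_4$ iteration gives only approximate solutions with $\dist(\nabla f_j,K)\to0$ and weakly-$*$ converging gradients. That is Proposition~\ref{prop:noapproxsplit}, not Proposition~\ref{prop:nosplit}. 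A minor slip: with your normalization $A_i=P_i+\kappa_i(P_{i+1}-P_i)$, it is $P_{i+1}$ that lies on $[P_i,A_i]$, so the splitting is $P_{i+1}\to\{P_i,A_i\}$.

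What an exact solution needs is an in-approximation of $K$ relative to $\Sigma$ (Definition~\ref{de:in_approx_constraint}). These are sets $U_i$ open in $\Sigma$ with $U_i\subset U_{i+1}^{rc}$ and $\sup_{X\in U_i}\dist(X,K)\to0$. The M\"uller--\v{S}ver\'ak theorem on convex integration with constraints (Theorem~\ref{th:constraint}) then gives a Lipschitz $f$ with $\nabla f\in K$ a.e.\ and $f(x)=Ax$ on $\partial\Omega$, for any $A\in U_1$.

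The paper's five-point set is in fact your $T_4$ set (Lemma~\ref{lem:T4}, $c\ge3>1+\sqrt2$) together with $X_5=I$. The fifth matrix does not absorb boundary layers; it makes $\{X_1,\dots,X_5\}$ a \emph{large} $T_5$ set (Proposition~\ref{T5p:ex}):
\begin{itemize}
\item the orderings $[1\,2\,3\,5\,4]$, $[1\,2\,4\,5\,3]$, $[1\,2\,5\,3\,4]$ are $T_5$ configurations, checked via the $A^\mu$ criterion (Proposition~\ref{pr:criterion_TN}, Lemma~\ref{T5l:mu});
\item at each $X_k$ the three rank-one directions $P_k^{\sigma_j}-X_k$ are linearly independent (Proposition~\ref{pr:criterion_rank3}).
\end{itemize}
These directions are tangent to $\Sigma$, so they span $T_{X_k}\Sigma$. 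F\"orster--Sz\'ekelyhidi (Theorem~\ref{th:large_T5}) use exactly this to build the in-approximation. So your fallback ``if no $T_4$ exists, pass to $T_5$'' is aimed at the wrong property: $T_4$'s do exist here, and what matters is the independence condition, not the mere existence of a $T_N$ configuration. The paper also mentions an alternative: thicken the $T_4$ set to its $\varepsilon$-neighbourhood in $L\cap\Sigma$ and adapt M\"uller--\v{S}ver\'ak's stability argument. That gives exact solutions but with infinitely many gradient values, so it loses (b).

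Your bi-Lipschitz step is fine in spirit, but it is cleaner to argue on the limit map directly. The bound $|\nabla f|^2\le\mathcal K\det\nabla f$ makes $f$ quasiregular with affine boundary values, hence a homeomorphism \cite{ball_invertibility81,LeonettiNesi,BojarskiDOnofrioetal}. Then $\nabla f^{-1}=(\nabla f)^{-1}\circ f^{-1}$ is bounded on the parallelogram $A\Omega$.
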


Theorem~\ref{th:nosplit} implies:
\begin{corollary}
\label{cor_global_nosplit}
 There exists a non-split bi-Lipschitz homeomorphism $f:\R\times\R\ra\R\times\R$ satisfying assertions (a)-(c) of Theorem~\ref{th:nosplit}.
\end{corollary}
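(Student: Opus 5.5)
The plan is to extend the map from Theorem~\ref{th:nosplit} by the affine map it agrees with on the boundary. Concretely, I would take $\Omega_1,\Omega_2$ to be bounded intervals, say $\Omega=(0,1)^2$ after translation and scaling, which is permitted since the theorem holds for the fixed bounded connected sets. Let $f:\Omega\to\R^2$ be the bi-Lipschitz homeomorphism provided there, and let $A:\R^2\to\R^2$ be the non-split affine map with $f=A$ on $\partial\Omega$ (more precisely, $f$ extends continuously to $\bar\Omega$ since it is Lipschitz). Set $F=f$ on $\bar\Omega$ and $F=A$ on $\R^2\setminus\Omega$.

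First I would check regularity and the a.e.\ properties of $F$. The two definitions agree on $\partial\Omega$, so $F$ is continuous. It is also Lipschitz: for a segment crossing $\partial\Omega$, split it at boundary points and apply the Lipschitz bounds piecewise, using the convexity of $\Omega$. Outside $\bar\Omega$ we have $\nabla F=\nabla A$, and $\partial\Omega$ is null. Properties (a)--(c) therefore reduce to showing that $\nabla A$ is split, bijective, and has determinant $1$.

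This is the step I expect to be the main obstacle. I would derive it from the fact that $\nabla f$ takes values a.e.\ in a finite set $K$ of split matrices with determinant $1$. By the divergence theorem, $\nabla A=\frac{1}{|\Omega|}\int_\Omega\nabla f$, since $f=A$ on $\partial\Omega$. So $\nabla A$ is a convex combination of elements of $K$, but convex combinations need not be split. Two routes are available:
\begin{itemize}
\item If the construction in the paper shows that $\nabla A$ is itself one of the five values, then this is immediate.
\item Otherwise, I would replace $\Omega$ by a rescaled copy and instead build the extension by tiling. The boundary condition on the square lets me place translated and rescaled copies $x\mapsto \lambda f((x-a)/\lambda)+\text{const}$ on a grid of squares, with constants chosen so that neighbouring copies agree on shared edges. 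This works because each copy equals the same affine map $A$ on its boundary, up to translation.
\end{itemize}
The tiled map has gradient in $K$ a.e.\ everywhere, so (a)--(c) hold globally regardless of what $\nabla A$ is.

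Next I need $F$ to be a bi-Lipschitz homeomorphism in both routes. In the tiled version, $F$ differs from $A$ by a $\Z^2$-periodic bounded function $g$. It is a local homeomorphism, since each copy is injective on its closed square and these glue along edges consistently with $A$. It is proper because $|F-A|$ is bounded, and $A$ is invertible: the degree argument gives $\det\nabla A>0$, as $f$ is orientation preserving with boundary values $A$. A proper local homeomorphism of $\R^2$ is a covering map, hence a homeomorphism. The lower Lipschitz bound follows at large scales from $|F(x)-F(y)|\ge c|x-y|-2\|g\|_\infty$, and at small scales from the local bi-Lipschitz property of $F$ near each point, with uniform constants by periodicity.

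Finally, $F$ is not split. On $\partial\Omega$, $F$ coincides with $f$, hence with the non-split affine map $A$, up to the fixed additive constant on the base tile. A split map restricted to a square boundary cannot agree with a non-split affine map: the boundary determines $A$, and a split $F$ would force $F(x,y)-F(x',y)$ to be independent of $y$, and similarly in the other variable. Comparing the corners of $\Omega$, this forces $\nabla A$ to be diagonal or antidiagonal. If $\nabla A$ happened to be split, it would contradict (d) of Theorem~\ref{th:nosplit} as stated for $f$.
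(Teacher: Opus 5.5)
Your tiling construction is the right one. The paper states the corollary as a direct consequence of Theorem~\ref{th:nosplit} without further argument, and this periodic extension is what that deduction requires. With $\Omega=(0,1)^2$, put $F(x)=f(x-a)+Aa$ on $a+\bar\Omega$ for $a\in\mathbb{Z}^2$. The pieces agree with $x\mapsto Ax$ on all tile edges, $F-A$ is $\mathbb{Z}^2$-periodic, and (a)--(c) hold because $\nabla F\in K$ off a null set.

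Your first construction, however, is not an open alternative. By (d) the boundary map is non-split, so $A\notin L$; in Proposition~\ref{prop:nosplit} one even has $A\in\Sigma\setminus L$. Extending by $A$ therefore violates (a) on $\R^2\setminus\bar\Omega$. Drop it, together with the averaging identity. The only fact about $A$ you need is $\det A=1$. It follows from $A\in\Sigma$ (or from $\det A=\frac{1}{|\Omega|}\int_\Omega\det\nabla f$) and gives invertibility without a degree argument.

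Two steps of your write-up are wrong or incomplete as stated.

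Your non-splitness argument is vacuous: for \emph{every} affine map, $F(x,y)-F(x',y)$ is independent of $y$, so comparing corners gives no constraint on $A$. What splitness actually forces is this. For a split map of the first kind, the horizontal differences $F(x,y)-F(x',y)$ lie in $\R\times\{0\}$ and the vertical ones in $\{0\}\times\R$; for the second kind it is the other way round. On the edges $y=0$ and $x=0$ this forces $A$ to be diagonal, resp.\ antidiagonal. Simpler still: the restriction of a split map of $\R\times\R$ to $\Omega_1\times\Omega_2$ is split, while $F|_\Omega=f$ is not, by (d).

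For the lower Lipschitz bound, your scale splitting leaves the intermediate range $r_0\le|x-y|\le R$ uncovered. This is fixable by compactness modulo $\mathbb{Z}^2$ plus injectivity. Also, the local bi-Lipschitz property at tile edges and corners is asserted rather than shown. It is cleaner to tile the inverse too. The map $f$ sends $\bar\Omega$ homeomorphically onto $A\bar\Omega$, with Lipschitz inverse equal to $A^{-1}$ on $A(\partial\Omega)$. So define $G(y)=f^{-1}(y-Aa)+a$ on the parallelograms $Aa+A\bar\Omega$, which tile $\R^2$. Then $G$ is well defined, and it is Lipschitz by the same segment-splitting argument you used for $F$, since the image tiles are convex. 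It satisfies $G\circ F=F\circ G=\mathrm{id}$. This gives bijectivity and the bi-Lipschitz bound at once, with no covering-space argument.
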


\medskip
We now consider the stability of the rigidity assertion in the $n\geq 2$ case,  establishing a quantitative no-oscillation result for approximately split maps.  Denote by $L\subset \R^{2n\times 2n}$ the set of split matrices and let $L_1$ and $L_2$ be the subsets
of matrices which map $\R^{n} \times \{0\}$ to itself or to $\{0\} \times \R^n$, respectively. In other words, in $n\times n$ block-matrix form we have $L=L_1\cup L_2$ with 
\begin{equation*}
    L_1=\left\{ \begin{pmatrix} A&0\\0&D\end{pmatrix}:\, A,D\in \R^{n\times n}\right\},\quad  L_2=\left\{ \begin{pmatrix} 0&B\\C&0\end{pmatrix}:\, B,C\in \R^{n\times n}\right\}.
\end{equation*}

Let $f_j:\Om\ra \R^{2n}$ be a sequence of maps  which is bounded in $W^{1, 2n}$.  We show that  if $\g f_j$ converges to $L$
 and  $\det \g f_j$ is controlled from below, 
then  $\g f_j$ converges to $L_1$ or to $L_2$.  In particular any   weak limit  $f$ is  split. 
Throughout this paper
$$ \text{ we use the half-arrow $\rightharpoonup$ to denote weak convergence.}$$
\begin{theorem}    \label{th:approximate_solutions} 
Suppose that $n \ge  2$ and
\begin{eqnarray}  \label{eq:weak_convergence_fj}
f_j &\rightharpoonup &f    \quad \text{in $W^{1, 2n}(\Omega, \R^{2n})$},\\
 \label{eq:convergence_to_split}
\dist( \g f_j, L) &\to& 0 \quad \text{in $L^{1}(\Omega)$},
\end{eqnarray}
and
\begin{equation}  \label{eq:lower_bound_det}
\lim_{\delta \downarrow 0} \limsup_{j \to \infty}  | \{ x \in \Omega : \det \g f_j(x) < \delta \}| = 0.
\end{equation}
Then  $\g f \in L$ a.e. and hence  $f$ is globally split. 
Morevoer
\begin{equation}  \label{eq:approx_strong_to_Li}
\dist( \g f_j, L_i) \to 0 \quad \text{in $L^{q}(\Omega)$  \quad for $i=1$ or for  $i=2$.}
\end{equation}
and all $q < 2n$. 
\end{theorem}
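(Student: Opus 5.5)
The plan is to pass to the gradient Young measure generated by $(\g f_j)$ and reduce everything to a rigidity statement for homogeneous gradient Young measures supported on $L$. After extracting a subsequence, $\g f_j$ generates a $W^{1,2n}$-gradient Young measure $(\nu_x)_{x\in\Omega}$. Three facts follow from the hypotheses.
\begin{itemize}
\item By \eqref{eq:convergence_to_split}, $\operatorname{supp}\nu_x\subset L$ for a.e.~$x$.
\item By \eqref{eq:lower_bound_det}, $\nu_x(\{\det<\delta\})\to0$ as $\delta\downarrow0$. Hence $\nu_x$ is carried by $L\cap\{\det>0\}$, which is the disjoint union of the two relatively closed cones $L_i^+:=L_i\cap\{\det>0\}$.
\item Since $(\g f_j)$ is bounded in $L^{2n}$, every minor of order $\le 2n$ is weakly continuous, at least in $L^1$ or in the sense of distributions after the usual equi-integrability/truncation argument. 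So $\langle\nu_x,M\rangle=M(\g f(x))$ for every minor $M$, and in particular $\langle\nu_x,\mathrm{id}\rangle=\g f(x)$.
\end{itemize}
Write $\nu_x=\lambda(x)\mu^1_x+(1-\lambda(x))\mu^2_x$ with $\mu^i_x$ probability measures on $L_i^+$. The heart of the proof is the claim that $\lambda(x)\in\{0,1\}$ for a.e.~$x$.

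To prove the claim, localize: by the standard blow-up/localization principle, $\nu_x$ is for a.e.~$x$ a homogeneous gradient Young measure, generated by maps that agree with the affine map $\g f(x)\cdot$ on the boundary of a cube. The idea is to exploit the absence of rank-one connections between $L_1$ and $L_2$ when $n\ge2$, in a form robust enough to survive Young measures. I would look for a polynomial null Lagrangian $\Phi$, i.e.~a linear combination of minors, that is positive on $L_1^+$, negative on $L_2^+$, and satisfies $\Phi(F)\Phi(G)<0$-type inequalities that are incompatible with the commutation relations for a mixed measure. Natural candidates are built from the $2\times2$ minors pairing a row of one block with a column of the other, together with $\det$ and the block determinants. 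The commutation $\langle\nu,\Phi\rangle=\Phi(\bar F)$ for several such $\Phi$ should force $\lambda\in\{0,1\}$.

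If no single null Lagrangian suffices, the fallback is to approximate. I would replace the generating sequence by Lipschitz truncations and project the gradients onto $L$, producing nearly exact solutions. I would then run the $W^{1,2}$ rigidity argument behind Theorem~\ref{th:split} quantitatively. This is possible because $W^{1,2n}\subset W^{1,2}$ locally and $2n\ge 4>2$, which leaves room. This step, excluding mixed Young measures, is the main obstacle.

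Granting the claim, $\nu_x$ is supported in $L_{i(x)}^+$ for a measurable index function $i(x)$. Therefore $\g f(x)=\langle\nu_x,\mathrm{id}\rangle\in L_{i(x)}$, because each $L_i$ is a linear space. Moreover $\det\g f(x)=\langle\nu_x,\det\rangle>0$, since $\nu_x$ is carried by $\{\det>0\}$. So $\g f$ is split and bijective a.e., and Theorem~\ref{th:split} applies (as $f\in W^{1,2n}\subset W^{1,2}_{\loc}$): $f$ is split. By connectedness of $\Omega$, the component is then globally fixed, either $\g f\in L_1$ a.e.~or $\g f\in L_2$ a.e. Since $\g f(x)$ is invertible, it lies in $L_{i(x)}$ and in no other component, so $i(x)\equiv i$ is constant.

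Finally, $\nu_x$ is supported in $L_i$ for a.e.~$x$, so $\int_\Omega\min(\dist(\g f_j,L_i),1)\,dx\to\int\langle\nu_x,\min(\dist(\cdot,L_i),1)\rangle\,dx=0$. Thus $\dist(\g f_j,L_i)\to0$ in measure. Since $\dist(\g f_j,L_i)\le|\g f_j|$ is bounded in $L^{2n}$, it is equi-integrable in $L^q$ for every $q<2n$, and Vitali's theorem gives \eqref{eq:approx_strong_to_Li} for the whole sequence. The whole sequence converges because the limit index $i$ is determined by $f$ alone, so the conclusion is independent of the subsequence.
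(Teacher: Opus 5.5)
\textbf{Verdict: there is a genuine gap.} Your Young-measure set-up is fine, and so is everything after ``granting the claim''. The barycenter step is also available to you: $\nu_x$ commutes with the $2\times2$ minors vanishing on $L$, and $\det\g f(x)=\langle\nu_x,\det\rangle>0$, so Lemma~\ref{le:minor_facts} gives $\g f(x)\in L$. The gap is the claim $\lambda(x)\in\{0,1\}$, which is the whole content of the theorem and which you do not prove.

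\textbf{Why your primary route fails.} Commutation with all minors, plus $\operatorname{supp}\nu_x\subset L\cap\{\det>0\}$, does not exclude mixing. On $L_1$ the minors of $F$ are, up to sign, exactly the products $m_a(A)m_b(D)$ of minors of the diagonal blocks. Let $\psi(D)$ be a null Lagrangian with $\psi\ge0$ on $\{\det D>0\}$. Then $\psi$ is constant along rank-one lines inside level sets of $\det$, so $\psi=c_0+c_1\det D$ with $c_0,c_1\ge0$. Applying this with $A$ fixed, and symmetrically with $D$ fixed, shows that any combination of minors that is nonnegative on $L_1^+$ equals $w_0+w_1\det$ on $L_1$, with $w_0,w_1\ge0$. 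By the bipolar theorem, any prescribed vector of moments of the minors not vanishing on $L_1$ with positive mass and positive $\det$-moment is realized by a finitely supported measure on $L_1^+$; the same holds for $L_2^+$. Now:
\begin{enumerate}
\item Take $\mu^2$ on $L_2^+$ with mass $1$, zero moments for all minors vanishing on $L_1$, and small $\det$-moment $q>0$.
\item Fix $\lambda\in(0,1)$ and $G=\operatorname{diag}(\hat A,\hat D)$ with $\det G>0$.
\item Solve $\lambda\langle\mu^1,M\rangle=M(G)-(1-\lambda)\langle\mu^2,M\rangle$ for all minors $M$ not vanishing on $L_1$. This is possible because the right-hand side has mass $\lambda$ and $\det$-component $\det G-(1-\lambda)q>0$.
\end{enumerate}
Then $\lambda\mu^1+(1-\lambda)\mu^2$ commutes with every minor and charges both $L_1^+$ and $L_2^+$.

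\textbf{Why the fallback is not an argument.} Projecting $\g f_j$ onto $L$ destroys the gradient structure. More to the point, the proof of Theorem~\ref{th:split} only uses $\det\g f\neq0$. Example~\ref{ex:det_pm_approximate} satisfies every hypothesis except \eqref{eq:lower_bound_det} (its determinants take both signs), and it does mix. So a working argument must use the sign of the determinant, which a quantitative $2\times2$-minor argument does not see.

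\textbf{The missing idea.} You need a quantity that is weakly continuous along your sequence but is not a null Lagrangian: $\det B\det C$, which equals $(-1)^n\det$ on $L_2$ and vanishes on $L_1$ (and likewise $\det A\det D$). The paper obtains it as follows.
\begin{enumerate}
\item Replace $f_j$ by an equi-integrable sequence with the same Young measure (Lemma~\ref{le:FMP}). Then $\dist(\g f_j,L)\to0$ in $L^{2n}$, so the $n\times n$ minors vanishing on $L$ tend to $0$ in $L^2$.
\item Apply Lemma~\ref{le:pullback_closed} to $dy_1\wedge\dots\wedge dy_n$ and $dy_{n+1}\wedge\dots\wedge dy_{2n}$. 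This gives $\partial_{x_k}\det A_j,\ \partial_{x_k}\det C_j\to0$ in $H^{-1}$ for $k>n$, and $\partial_{x_k}\det B_j,\ \partial_{x_k}\det D_j\to0$ in $H^{-1}$ for $k\le n$.
\item The quadratic form $a_1a_2$ vanishes on the wave cone, so Murat--Tartar gives $\det B_j\det C_j\overset{*}{\rightharpoonup}\det B\det C$ (Lemma~\ref{le:cc}).
\end{enumerate}
In your language this reads $\langle\nu_x,\det B\det C\rangle=\det\bar B\det\bar C$, where $\bar B,\bar C$ are the blocks of $\g f(x)$. If $\g f(x)\in L_1$, the right-hand side vanishes. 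But $(-1)^n\det B\det C=\det>0$ on $L_2^+$ and $\det B\det C=0$ on $L_1$, so $\nu_x(L_2^+)=0$. If $\g f(x)\in L_2$, argue the same way with $\det A\det D$. With this lemma inserted, your outline closes.
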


\begin{remark}  \label{re:almost_split}~
\ben \item Weak convergence in $W^{1,2n}$ cannot be replaced by weak convergence in $W^{1,p}$ for any $p < 2n$,
even if we replace  \eqref{eq:convergence_to_split} and  \eqref{eq:lower_bound_det} by the stronger conditions
$\dist(\g u_j, L) \to 0$ in $L^s(\Omega)$ for all $s < \infty$ and 
$|\{ \det \g u_j \ne 1\}| \to 0$,
see \cite[Theorem 1.4]{kmsx_counterexample}.
\item  Condition \eqref{eq:lower_bound_det} is in particular satisfied if $\det \g f_j \ge \delta > 0$ almost everywhere. 
Condition \eqref{eq:lower_bound_det}  cannot be replaced by either of the two conditions $\det \g f_j > 0$ or $|\det \g f_j| \ge \delta >0$,  see Examples~\ref{ex:counterexample_det_bigger_zero} and \ref{ex:det_pm_approximate}. 
\item  The proof shows that  for the conclusion that $\g f$ is (globally) split, 
it actually suffices to assume $f_j \rightharpoonup f$ (i.e.~weakly) in $W^{1, 2}(\Omega, \R^{2n})$, $\dist(\g f_j, L) \to 0$ in $L^1(\Omega)$
and $\det \g f \ne 0$ a.e. 
Without weak convergence in $W^{1,2n}$ one can, however, 
 in general, not get information on $\det \g f$ from $\det \g f_j$.
\item \label{it:almost_split_vs_split}
  If $\Omega_1$ and $\Omega_2$ have Lipschitz boundary, it follows from  the compact Sobolev embedding that the $f_j$ are close to the  split map $f$  in
$L^{q}$ for all       $q < \infty$.  One might wonder whether there exist split maps $g_j$ which are close to $f_j$ in $W^{1,1}$.
The following example shows that this is in general not the case. Let $n=2$,  $\Omega_i = (0,1)^2$, let $h \in C^1(\R)$ be $1$-periodic,  
and let $\varphi \in C^1([0,1])$. Consider the maps $f_j:\Omega\to\R^4$ 
\begin{equation*}
f_j(x)=\left(x_1+\tfrac1j h(jx_2) \varphi(x_3),x_2,x_3,x_4\right).
\end{equation*}
Then $\det \g f_j = 1$ and $\dist(\g f_j, L) \le C j^{-1}$.
If $g$ is a globally split map, then $\partial_2 g_1$ is independent of $x_3$.
Using the estimate 
$$
\| \varphi - \bar \varphi\|_{L^1(0,1)}  \le 2  \|\varphi - c \|_{L^1(0,1)}  \quad \forall c \in \R$$
where  
$$\bar \varphi = \int_0^1 \varphi(t) \, dt, $$
we see that 
\begin{equation*}
\begin{split}
  \| \g f_j - \g g\|_{L^1((0,1)^4)} &\ge \|h'\varphi-\partial_2g_1\|_{L^1((0,1)^4)}\\
  &\ge \frac12\|h'\|_{L^1(0,1)}\|\varphi - \bar \varphi\|_{L^1(0,1)}
  \end{split}
\end{equation*}
for all  split maps $g$.
\een
\end{remark}

\bigskip

\begin{remark}
We remark in passing that Theorem \ref{th:approximate_solutions} for approximately split maps can be stated  and proved very concisely in the language of  gradient 
Young measures.  These measures capture the one-point statistics of a sequence of gradients. More precisely, a map  $\nu$ from $\Omega$ to the set  $\mathcal P(\R^{2n \times 2n})$ of probability measures on $\R^{2n \times 2n}$
is a $W^{1,p}$ gradient Young measure if there exists a sequence of $W^{1,p}$ maps $f_j: \Omega \subset \R^{2n} \to \R^{2n}$ such
that  $|\g f_j|^p \rightharpoonup g$ in $L^1(\Omega)$ and,
$$  \psi \circ \g f_j \rightharpoonup \bar \psi \quad 
\text{in $L^1(\Omega)$ \quad  with  \quad $\bar \psi(x) = \int_{\R^{2n \times 2n}} \psi(X) \, d\nu_x(X)$}
$$
for a.e. $x \in \Omega$ and 
 for every continuous function $\psi: \R^{2n \times 2n} \to \R$ which satisfies
 $$ \psi(X) \le C (1 + |X|^p)$$
 for some $C > 0$.

We say that a gradient Young measure  $\nu$ is supported in a Borel set $A \subset \R^{2n \times 2n}$ if $\nu(x)(\R^{2n \times 2n} \setminus  A) = 0$ for a.e.\ $x \in \Omega$.
A $W^{1,p}$ gradient Young measure $\nu$ is called homogeneous if there exists a probability measure $\mu$ such that $\nu_x = \mu$ for a.e.
$x \in \Omega$. In this case, by abuse of notation, one also calls $\mu$ a homogeneous $W^{1,p}$ gradient Young measure. With this preparation we can restate Theorem \ref{th:approximate_solutions} as
\begin{theorem}   \label{th:hgym} 
Let $n \ge 2$ and let $\Sigma_+ = \{ X \in \R^{2n \times 2n} : \det X > 0 \}$.
If $\Omega \subset \R^{2n}$ is bounded, open, and connected and $\nu: \Omega \to \mathcal P(\R^{2n \times 2n})$ is a $W^{1, 2n}$
   gradient Young measure which is supported in $L \cap \Sigma_+$,
then $\nu$ is supported in $L_1  \cap \Sigma_+$
 or in $L_2  \cap \Sigma_+$.
\end{theorem}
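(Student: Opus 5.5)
We will deduce Theorem~\ref{th:hgym} from Theorem~\ref{th:approximate_solutions}, which we regard as already established, by translating the hypotheses on $\nu$ into hypotheses on a generating sequence.

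\emph{Step 1: a generating sequence.} Since $\nu$ is a $W^{1,2n}$ gradient Young measure, there are maps $f_j\in W^{1,2n}(\Omega;\R^{2n})$ with $|\g f_j|^{2n}$ equi-integrable that generate $\nu$. After subtracting averages and using boundedness of $\Omega$ (with Poincar\'e applied locally, if $\Omega$ is not Lipschitz), we may assume that $f_j\rightharpoonup f$ weakly in $W^{1,2n}$. This gives \eqref{eq:weak_convergence_fj}, and the barycenter of $\nu_x$ is $\g f(x)$.

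\emph{Step 2: verifying \eqref{eq:convergence_to_split} and \eqref{eq:lower_bound_det}.} Take $\psi(X)=\dist(X,L)$. It is continuous with linear growth, so $\dist(\g f_j,L)\rightharpoonup \int \dist(\cdot,L)\,d\nu_x=0$ weakly in $L^1$. Because the functions are nonnegative, weak convergence to $0$ implies strong convergence in $L^1$. For the determinant condition, fix $\delta>0$ and choose a continuous cutoff $\chi_\delta$ with $\chi_\delta=1$ on $\{\det<\delta\}$, $\chi_\delta=0$ on $\{\det\ge 2\delta\}$ and $0\le\chi_\delta\le1$. Then
\[
\limsup_{j\to\infty}|\{\det\g f_j<\delta\}|\le\int_\Omega\nu_x(\{\det X<2\delta\})\,dx .
\]
The support in $L\cap\Sigma_+$ gives $\nu_x(\{\det\le 0\})=0$, so the right-hand side tends to $0$ as $\delta\downarrow0$ by monotone convergence. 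Thus \eqref{eq:lower_bound_det} holds.

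\emph{Step 3: conclusion.} Theorem~\ref{th:approximate_solutions} now gives $\dist(\g f_j,L_i)\to0$ in $L^1$ for one fixed $i$. Testing with $\psi=\dist(\cdot,L_i)$ yields $\int_\Omega\int\dist(X,L_i)\,d\nu_x\,dx=0$, so $\nu_x$ is supported in $L_i$ for a.e.\ $x$. Combined with the support in $\Sigma_+$, this shows $\nu$ is supported in $L_i\cap\Sigma_+$.

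The only delicate point is Step 2. The support condition only says $\nu_x(\{\det\le0\})=0$, so some care is needed that no positive mass of $\nu_x$ sits near $\{\det=0\}$ in a way that violates the uniform condition \eqref{eq:lower_bound_det}. The estimate above handles this, because the $\limsup$ in $j$ is controlled pointwise in $x$ by $\nu_x$, and only afterwards is the limit $\delta\downarrow0$ taken under the integral.

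Conversely, Theorem~\ref{th:approximate_solutions} follows from Theorem~\ref{th:hgym} along subsequences, since any Young measure generated by such $f_j$ is supported in $L\cap\overline{\Sigma_+}$ with no mass on $\{\det=0\}$, hence in $L\cap\Sigma_+$. This is why the statement is described as a restatement.
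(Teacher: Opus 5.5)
Your reduction is the one the paper intends: the paper offers Theorem~\ref{th:hgym} only as a restatement of Theorem~\ref{th:approximate_solutions}. Your Steps 2 and 3 are correct. Testing with $\psi=\dist(\cdot,L)$ gives \eqref{eq:convergence_to_split}. The cutoff argument, with $\delta\downarrow 0$ taken under the $x$-integral, gives \eqref{eq:lower_bound_det}. Testing with $\dist(\cdot,L_i)$ recovers the support of $\nu_x$.

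The gap is in applying Theorem~\ref{th:approximate_solutions} itself. That theorem is stated and proved only on product domains $\Omega_1\times\Omega_2$ with $\Omega_1,\Omega_2\subset\R^n$ connected. Its Step 1 goes through Theorem~\ref{th:split}, whose proof uses that $a_{ij}$ depends only on $x'\in\Omega_1$ and $b_{ij}$ only on $x''\in\Omega_2$. In Theorem~\ref{th:hgym}, however, $\Omega\subset\R^{2n}$ is an arbitrary bounded connected open set, so you cannot invoke Theorem~\ref{th:approximate_solutions} on $\Omega$ directly.

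Your Step 1 has a related problem. On a bounded connected but irregular domain (rooms and corridors) the $L^{2n}$ Poincar\'e inequality can fail. So subtracting constants need not make $f_j$ bounded in $L^{2n}(\Omega)$. \emph{Poincar\'e applied locally} only gives weak convergence in $W^{1,2n}_{\loc}$, which is not hypothesis \eqref{eq:weak_convergence_fj}.

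Both issues go away if you localise and then use connectedness.
\begin{enumerate}
\item Take an open cube $Q=Q'\times Q''$ with $\overline{Q}\subset\Omega$ and subtract from $f_j$ its mean over $Q$. By Poincar\'e on $Q$, a subsequence converges weakly in $W^{1,2n}(Q)$, and it still generates $\nu|_Q$.
\item Your Steps 2 and 3 on $Q$, together with Theorem~\ref{th:approximate_solutions} for the product domain $Q$, show that $\nu_x$ is supported in $L_{i(Q)}$ for a.e.\ $x\in Q$.
\item The index $i(Q)$ is uniquely determined. Moreover $i(Q)=i(\tilde Q)$ whenever $Q\cap\tilde Q\neq\emptyset$. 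Otherwise $\nu_x$ would be concentrated on $L_1\cap L_2=\{0\}$ on the nonempty open set $Q\cap\tilde Q$, contradicting $\nu_x(\Sigma_+)=1$.
\item Hence the union of cubes with $i(Q)=1$ and the union of cubes with $i(Q)=2$ are disjoint open sets covering $\Omega$. Connectedness forces one of them to be empty.
\end{enumerate}
With this chaining step added, your argument proves the theorem.
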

In concurrence with Remark \ref{re:almost_split}(1) we can restate the sharpness of the exponent $2n$ as follows.
Let $\Sigma_1 = \{ X  \in \R^{2n \times 2n}: \det X = 1\} \subset \Sigma_+$. Then there exists a probability measure $\mu$ which is supported in $L \cap \Sigma_1$, such that $\mu$ is a   $W^{1,p}$ gradient Young measure for all $p < 2n$, and satisfies
$$ \mu(L_1) > 0 \quad \text{and} \quad \mu(L_2) > 0.$$
\end{remark}

\bigskip

\subsection{Context}
Question~\ref{qu:global_split} originated from rigidity questions, which arose in geometric group theory and geometric mapping theory.  We give a brief indication of this connection here, describing only the simplest case; for more details and context see \cite{KMX1new}. Let $\H$ denote the Heisenberg group equipped with the Carnot-Carath\'eodory metric and the usual bi-invariant  measure. Recall that $\H$ has topological dimension $3$ and homogeneous dimension $4$; in particular the volume of a metric ball of radius $r$ is given by $c r^4$.  The simplest question about products is the following:

\begin{question}
\label{que_heisenberg_product}
 If $f:\H\times\H\ra \H\times\H$ is a bi-Lipschitz homeomorphism, must $f$ be split?  
 Here $\H \times \H$ is equipped with the product metric and product measure.
\end{question}

\noindent
The map $f$ is Pansu differentiable almost everywhere \cite{pansu};  by definition, the Pansu differential $D_P f(x)$
is a (graded) group automorphism of $\H \times \H$,  and it is a little exercise in linear algebra to show that $D_P f(x)$ either preserves the first and the second factor or swaps them.  Thus the Pansu differential is split, i.e. it preserves product structure, and Question~\ref{que_heisenberg_product} reduces to a problem formally identical to Question~\ref{qu:global_split}, except that $\R^n$ is replaced by the Heisenberg group, and the usual differential is replaced by the Pansu differential.
It was shown in  \cite{KMX1new} that $D_P f$ cannot oscillate between these two behaviours and hence $f$ is split. 
This assertion also holds for Sobolev mappings: if $f:\H\times \H\ra \H\times \H$ is a $W^{1,p}_{\loc}$ 
-mapping for $p\geq 3$, and the approximate Pansu differential $D_Pf(x)$ is invertible almost everywhere, then $f$ is split \cite{kmx_approximation_low_p}.  It is not known whether this conclusion also holds for $p < 3$.

\smallskip

We have indicated above how Question~\ref{qu:global_split} arose from a rigidity question in the setting of Carnot groups.  It turns out that 
our discussion of Question~\ref{qu:global_split} also yields mappings between  Carnot groups;  these are of  interest in connection with rigidity of Iwasawa groups, see Remark~\ref{rem_f1_f2_rigidity} and \cite{kmx_iwasawa}. 

Let $\fh$ denote the Lie algebra of $\H$ with standard basis $X_1,X_2,X_3$, and let $\fh=V_1\oplus V_2$ be the grading, where $V_1=\Span\{X_1,X_2\}$, $V_2=\Span\{X_3\}$.  We identify $V_1$ with $\R^2$ by $X_i\leftrightarrow e_i\in \R^2$.   Given a Lipschitz map $f:\H\ra\H$, we may also view the Pansu differential as a graded Lie algebra homomorphism $Df(x):\fh\ra \fh$; restricting to the horizontal subspace $V_1\subset\fh$, we obtain the {\bf horizontal differential} $d_Hf(x):=Df(x)\restr_{V_1}:V_1\ra V_1$.   Combining Corollary~\ref{cor_global_nosplit} with a lifting argument  yields bi-Lipschitz mappings of the Heisenberg group whose horizontal differential splits, but have oscillatory behavior.

\begin{corollary}
\label{cor_heisenberg_example} 
There is a bi-Lipschitz homeomorphism $\hat f:\H\ra \H$  such that for a.e. $x\in\H$, the horizontal differential $d_H\hat f(x):\R\times\R\simeq V_1\ra V_1\simeq \R\times \R$ is split, but $\hat f$ does not preserve the left coset foliations for the $1$-parameter subgroups generated by $X_1$ and $X_2$ (i.e. $d_H\hat f$ exhibits oscillatory behavior). 
\end{corollary}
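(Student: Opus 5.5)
Starting from the map $f:\Om\ra\R\times\R$ of Corollary~\ref{cor_global_nosplit} (with $n=1$), I will lift it to $\hat f:\H\ra\H$. Since $\g f$ is split and $\det\g f=1$ a.e., the pullback of the standard contact/symplectic form satisfies $f^*(dx\wedge dy)=\det(\g f)\,dx\wedge dy=dx\wedge dy$ a.e. Hence the $1$-form $f^*\alpha-\alpha$, with $\alpha=\tfrac12(x\,dy-y\,dx)$, is weakly closed. On $\R^2$ it is therefore the differential of a Lipschitz function $\phi$; this follows from the weak Poincar\'e lemma for $L^\infty$ forms, since $f^*\alpha$ has $L^\infty_{\loc}$ coefficients because $f$ is Lipschitz. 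I then define
$$\hat f(x,y,t)=(f(x,y),\,t+\phi(x,y))$$
in exponential coordinates. Formally this is a contact map preserving $dt-\alpha$. Its horizontal differential is $\g f(x,y)$ at a.e.\ point, because the Pansu differential of such a lift is the graded automorphism whose first layer is $\g f$. By part (a) of Theorem~\ref{th:nosplit}, $d_H\hat f$ is therefore split a.e.

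For the metric properties, $\hat f$ is a lift of a Lipschitz map of $\R^2$ via a horizontal lift. Horizontal curves go to horizontal curves, and their lengths are distorted by at most $\mathrm{Lip}(f)$, so $\hat f$ is Lipschitz for the Carnot--Carath\'eodory metric. The same argument applied to $f^{-1}$, which is also bi-Lipschitz with split, area preserving differential, produces a lift $\widehat{f^{-1}}$. After normalizing additive constants in $\phi$, this lift is the inverse of $\hat f$, because both maps commute with the center and cover mutually inverse maps. Hence $\hat f$ is a bi-Lipschitz homeomorphism.

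Finally, $\hat f$ maps left cosets of $\exp(\R X_1)$ onto curves projecting to $f$-images of horizontal lines. If $\hat f$ preserved the coset foliations (possibly swapping them), then $f$ would map horizontal and vertical lines to horizontal or vertical lines, and $f$ would be split. This contradicts part (d) of Theorem~\ref{th:nosplit}. I expect the main obstacle to be the regularity of the lifting step, namely justifying exactness of $f^*\alpha-\alpha$ and the Pansu differential computation for merely Lipschitz $f$. I would handle this by mollifying $f$, or by checking the claims along a.e.\ horizontal line via Fubini.
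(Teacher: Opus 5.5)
Your proposal is correct and follows the paper's proof essentially step for step. The paper likewise lifts $f$ as $(x_1,x_2,x_3)\mapsto(f(x_1,x_2),x_3+u(x_1,x_2))$, where $u$ is a primitive of the $1$-form $\theta_3-\hat h^*\theta_3$ for the naive lift $\hat h=(f,x_3)$; that form descends to $\R^2$ and is closed by area preservation. It then proves the $d_{CC}$-Lipschitz bound by pushing horizontal curves forward, identifies the horizontal Pansu differential with $\nabla f$, and obtains bi-Lipschitz by lifting $f^{-1}$. Two refinements: the paper handles the a.e.\ issue by Fubini over left translates of an arbitrary horizontal curve, not over horizontal lines. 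And for the inverse, covering mutually inverse maps and commuting with the center only makes $\hat f\circ\widehat{f^{-1}}$ a vertical shear $(x,y,t)\mapsto(x,y,t+w(x,y))$; $w$ is constant because the composite also preserves the contact form, which is exactly the uniqueness statement Lemma~\ref{lem_lifting}(5) that the paper invokes.
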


\bigskip

\subsection{Organisation} In Section~\ref{se:split} we  prove the results for $n \ge 2$. 
In Section~\ref{se:nosplit}  we construct a non-split map using the theory of convex integration.
We show that there exists five  split $2 \times 2$  matrices $X_1, \ldots, X_5$ with determinant one,
a non-split $2 \times 2$ matrix $A$ and a Lipschitz map $f: \Omega \to \Omega$ such that
$\g f \in \{X_1, \ldots, X_5\}$ a.e. and $f(x) = Ax$ on $\partial \Omega$. 
By a result of F\"orster and the third author \cite{szekelyhidi_forster18},  such maps exist provided
that the five matrices $X_1, \ldots, X_5$ form a so-called large $T_5$ configuration. 
In Appendix~\ref{sec_proof_heisenberg_corollary} we provide more details on the context in the Heisenberg setting and give the proof of Corollary \ref{cor_heisenberg_example}.

\section{Proof of  splitting for $n \ge 2$.}  \label{se:split}

In this section we prove the results for $n \ge 2$: Theorem~\ref{th:split},  Theorem~\ref{th:approximate_solutions}, 
and Theorem~\ref{th:hgym}.

\subsection{Split maps}
The proof  of 
Theorem~\ref{th:split} is based on the
fact that minors (subdeterminants) of the gradient of a map $f : U \subset
\R^m \to \R^d$ satisfy certain compatiblity relations. For example, the
$1\times 1$ minors of the differential of  a $C^2$ map  satisfy 
$ \frac{\partial}{\partial x_m} \frac{\partial f_i}{\partial x_j} =  \frac{\partial}{\partial x_j} \frac{\partial f_i}{\partial x_m}$.
For higher order
minors the compatibility conditions can be very efficiently encoded in the language of differential forms. 
Recall that for a $k$-form $\alpha  = \sum_{i_1, \ldots, i_k} a_{i_1 \ldots i_k}(y) \, dy_{i_1} \wedge \ldots \wedge dy_{i_k}$
on $\R^d$ the pullback by $f$ is defined as the following $k$-form on $U$
$$
(f^* \alpha)(x) = \sum_{i_1, \ldots, i_k} a_{i_1 \ldots i_k}(f(x))  \sum_{j_1, \ldots, j_k} 
\frac{\partial f_{i_1}}{\partial x_{j_1}} dx_{j_1} \wedge \ldots \wedge \frac{\partial f_{i_k}}{\partial x_{j_k}} dx_{j_k}.
$$
Note that by antisymmetry of the wedge product the right hand side depends only on the $k \times  k$ minors of $\g f$. 
The compatibility condition on the minors is expressed by the fact that pullback commutes with exterior differentiation. Specifically,
 we use the following result.
 
\begin{lemma}  \label{le:pullback_closed}
 Let $U \subset  \R^m$, let $\alpha$  be a smooth $k$-form on $\R^d$ and let $f \in W^{1,k}(U;\R^d)$. If $\alpha$  is closed, then $f^* \alpha$ is weakly closed, i.e., for every smooth $m -k-1$-form
  $\beta$ which is compactly supported in $U$, we have
  \begin{equation}
  \int_U f^* \alpha \wedge d\beta = 0.
  \end{equation}
 More generally, if $\alpha$ is a general smooth $k$-form on $\R^d$ and $f \in W^{1,k+1}(U;\R^d)$
  then (weak)  exterior differentiation and pullback commute, i.e.,
  \begin{equation}
  \int_U f^* \alpha \wedge d\beta = (-1)^{k+1} \int_U  f^*d\alpha \wedge \beta.
  \end{equation}
  \end{lemma}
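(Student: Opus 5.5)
The plan is to prove both statements by approximation: verify them for smooth maps, where they are classical, and then pass to the limit in $W^{1,k}$ (resp. $W^{1,k+1}$). The argument is local, so it suffices to fix the compactly supported test form $\beta$ and work on an open set $U' \Subset U$ containing $\operatorname{supp}\beta$. Mollifying $f$ gives $f_\epsilon \in C^\infty(\overline{U'};\R^d)$ with $f_\epsilon \to f$ in $W^{1,k}(U')$. After passing to a subsequence we may also assume $f_\epsilon \to f$ a.e. and that $|\g f_\epsilon|^k$ is dominated by a fixed integrable function; this dominated subsequence version of the converse of dominated convergence is standard.

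For smooth $f_\epsilon$ the identity $d(f_\epsilon^*\alpha) = f_\epsilon^* d\alpha$ is classical. With Stokes' theorem, using that $\beta$ has compact support, we get
\[
0 = \int_U d\bigl(f_\epsilon^*\alpha\wedge\beta\bigr) = \int_U f_\epsilon^*d\alpha\wedge\beta + (-1)^k\int_U f_\epsilon^*\alpha\wedge d\beta ,
\]
which is exactly the claimed identity for $f_\epsilon$. When $d\alpha=0$, the first term vanishes identically.

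The main step is the passage to the limit. The coefficients of $f_\epsilon^*\alpha$ are sums of terms $a_I(f_\epsilon)\,M(\g f_\epsilon)$, where $M$ ranges over $k\times k$ minors, so each term is a product of $k$ entries of $\g f_\epsilon$. There is one genuine difficulty: $\alpha$ is smooth on $\R^d$ but need not be bounded, while $f$ is not assumed bounded. I would handle this in two stages.

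\emph{Bounded coefficients.} First assume $\alpha$ has bounded coefficients, for instance compactly supported, or use a cutoff $\alpha\chi(\cdot/R)$. Then $a_I(f_\epsilon)\to a_I(f)$ a.e. with a uniform bound. Each minor converges in $L^1$: by multilinearity and Hölder's inequality with $k$ factors in $L^k$, $M(\g f_\epsilon)\to M(\g f)$ in $L^1$. Dominated convergence (along the subsequence) then gives $f_\epsilon^*\alpha\to f^*\alpha$ in $L^1(U')$, and since $d\beta$ is bounded the first claim follows.

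For the second claim, $f_\epsilon^* d\alpha$ involves $(k+1)$-minors, which is why the hypothesis is strengthened to $f\in W^{1,k+1}$. The same argument applies with $k+1$ factors in $L^{k+1}$, noting that $W^{1,k+1}\subset W^{1,k}$ locally.

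\emph{General $\alpha$.} To remove the boundedness assumption, replace $\alpha$ by $\chi_R\alpha$ with $\chi_R(y)=\chi(y/R)$, where $\chi$ is a cutoff equal to $1$ on the unit ball, and let $R\to\infty$. We have $d(\chi_R\alpha)=\chi_R d\alpha + d\chi_R\wedge\alpha$, and $|d\chi_R|\le C/R$ is supported where $|f|\ge R$. The pulled-back terms are bounded by $\sup_{B_{2R}}|\alpha|\cdot|\g f|^{k}$ on the set $\{|f|>R\}$, which need not be small if $\alpha$ grows. Here I expect to need either an interpretation of the lemma in which $\alpha$ has bounded coefficients (the relevant application presumably uses constant-coefficient forms like $dy_{i_1}\wedge\dots\wedge dy_{i_k}$, where everything is trivial), or a truncation of $f$ itself. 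Thus the integrability of $a_I(f)\,M(\g f)$ for unbounded $\alpha$ is the one point I expect to require care. For constant-coefficient closed forms, which suffice for Theorem~\ref{th:split}, the argument above is complete.
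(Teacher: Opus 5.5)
Your argument is essentially the paper's proof: prove $\int f^*\alpha\wedge d\beta=(-1)^{k+1}\int f^*d\alpha\wedge\beta$ for smooth maps using $d f^*\alpha=f^*d\alpha$ and Stokes, then pass to the limit along smooth approximations in $W^{1,k}_{\loc}$ (resp.\ $W^{1,k+1}_{\loc}$), using that the pullbacks involve only $k\times k$ (resp.\ $(k+1)\times(k+1)$) minors. Your treatment of the coefficients $a_I(f_\epsilon)$ is more careful than the paper's one-line limit, and the caveat you raise is genuine: for rapidly growing coefficients $f^*\alpha$ need not lie in $L^1_{\loc}$, so the lemma implicitly needs bounded or suitably controlled coefficients. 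One correction to your last remark: constant-coefficient forms are not the only ones the paper uses, since Lemma~\ref{le:weak_continuity} applies the second identity to $y^{i_1}\,dy^{i_2}\wedge\cdots\wedge dy^{i_k}$, which has linear coefficients; your scheme still covers that case by H\"older, because $f_\epsilon\to f$ in $L^k$ and the $(k-1)\times(k-1)$ minors converge in $L^{k/(k-1)}$.
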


\begin{proof} If $f$ is smooth and $\alpha$  is a smooth $k$-form on $\R^d$ then 
we have $d f^* \alpha = f^* d \alpha$.  Indeed, this follows easily by induction, 
starting with $k = 1$ and using the identities $f^*(\alpha \wedge \gamma) = f^* \alpha \wedge f^* \gamma$
 and     $d(\alpha \wedge \gamma) = d\alpha \wedge \gamma +  (-1)^k  \alpha \wedge d\gamma$ for a $k$-form
 $\alpha$ and an $\ell$-form $\gamma$.  
 Thus for maps $f$  which are smooth on the support of $\beta$   we have by Stokes'  theorem

\begin{align*}&  \, \int_U f^*\alpha \wedge d\beta  \\= & (-1)^k \int_U d(f^*\alpha \wedge \beta)  - (-1)^k \int df^*\alpha \wedge \beta \\
=  & \,  0 +  (-1)^{k+1} \int f^*d\alpha \wedge \beta 
\end{align*}
Now  $f^*\alpha$  depends only on the $k \times k$  minors of $\g f$ and $f^*d\alpha$ depends only on the $(k+1) \times (k+1)$ minors of $\g f$.   Thus the assertions follow,
 since a $W^{1,p}$ map can be approximated by $C^\infty$  maps in $W^{1,p}_{\rm loc}$.
 \end{proof}

\begin{proof}[Proof of Theorem~\ref{th:split}]
Recall that $L \subset \R^{2n \times 2n}$ denotes the set of split matrices, $L_1 \subset L$
is the set of split matrices which preserve $\R^n \times \{0\}$,  and $L_2 \subset L$ is the 
set of split matrices which map $\R^n  \times \{0\}$ to $\{0\} \times \R^n$. 
Since $\g f \ne 0$ a.e., there exists  a measurable function 
$\chi: \Omega \to \{0, 1\}$ such 
that 
\begin{equation} \label{eq:define_chi_new}
\chi(x) = \begin{cases} 1 & \text{if $\g f(x) \in L_1$,} \\
0 & \text{if $\g f(x) \in L_2$.}
\end{cases}
\end{equation}
We claim that 
\begin{equation} \label{eq:claim_chi_new}
\chi = 0 \quad \text{almost everywhere} \quad \text{or} \quad \chi = 1 \quad \text{almost everywhere.}
\end{equation}
From this claim one easily deduces that $f$ is split. 

The pullback of the form   $\alpha = dy_1 \wedge dy_2$   
   is given by
\begin{align*} f^*\alpha =  df_1 \wedge df_2 = \sum_{1 \le  i < j \le 2n}  M_{ij}(\g f) dx_i \wedge dx_j
\end{align*}
where 
$$ M_{ij} = \det \begin{pmatrix} \partial_i f_1 & \partial_j f_1 \\
\partial_i f_2 & \partial_j f_2
\end{pmatrix}.
$$
Since $\g f(x) \in L$ a.e. the terms with $i \le n$ and $j \ge n+1$ vanish a.e. Thus
\begin{equation}
f^*\alpha = \sum_{1 \le i < j \le n} a_{ij} dx_i \wedge dx_j + \sum_{n+1 \le i < j \le 2n} b_{ij} dx_i \wedge dx_j
\end{equation}
where 
\begin{equation}  \label{eq:chi_determines_a_b}  \sum_{i < j} a_{ij}^2 = 0 \quad \text{if $\chi = 0$} \quad \text{and} \quad  \sum_{i < j} b_{ij}^2 = 0
\quad \text{if $\chi = 1$}.
\end{equation}
We now claim that 
\begin{eqnarray}
\frac{\partial a_{ij}}{\partial x_l} &=& 0 \quad \text{if $n+1 \le l \le 2n$,}   \label{eq:distributional_aij}\\
\frac{\partial b_{ij}}{\partial x_l} &=& 0 \quad \text{if $1 \le l \le n$,} \label{eq:distributional_bij}
\end{eqnarray}
in the sense of distributions. 
To show the result for $i=1, j=2$ and $l = 2n$,  we apply Lemma~\ref{le:pullback_closed} with $k=2$ and
$$ \beta = \varphi \omega  \quad \text{where} \quad \omega = dx_3 \wedge \ldots \wedge dx_{2n-1}$$
and $\varphi \in C_c^\infty(\Omega)$. 
Then 
$$ d \beta =  \sum_{m \in \{1,2, 2n\}} \frac{\partial \varphi}{\partial x_m} dx_m \wedge \omega.$$
To compute $f^*\alpha \wedge d\beta$ we first note that 
$$ dx_i \wedge dx_j \wedge dx_m \wedge \omega = 0 \quad \text{if $n+1 \le i < j \le 2n$} $$
since this form contains $n+1$ terms $dx_l$ with $l \in \{n+1, \ldots, 2n\}$.
Similarly 
$$ dx_i \wedge dx_j \wedge dx_m \wedge \omega = 0 \quad \text{if $1 \le i < j \le n$ and $m \in \{1, 2\}$.} $$
Thus 
$$ f^*\alpha \wedge d\beta =  f^*\alpha \wedge \frac{\partial \varphi}{\partial x_{2n}} dx_{2n} \wedge \omega = 
- \frac{\partial \varphi}{\partial x_{2n}}    a_{12}  \,   dx_1 \wedge \ldots \wedge x_{2n}$$
and Lemma~\ref{le:pullback_closed} yields
$$ \int_\Omega \frac{\partial \varphi}{\partial x_{2n}} a_{12}  \, dx = 0 \quad \forall \varphi \in C_c^\infty(\Omega). $$
This shows that  \eqref{eq:distributional_aij} holds for $i=1$, $j=2$ and $k= 2n$.
The remaining assertions follow in the same way by taking $\omega$ as  the $(2n-3)$-form $dx_{l_1} \wedge \ldots \wedge dx_{l_{2n-3}}$
where $dx_i$, $dx_j$ and $dx_k$ are omitted. The proof of \eqref{eq:distributional_bij} is analogous.

It is easy to see that \eqref{eq:chi_determines_a_b} ,  \eqref{eq:distributional_aij}, and  \eqref{eq:distributional_bij} imply the assertion.
We include the details for the convenience of the reader.
Recall that $\Omega = \Omega_1 \times \Omega_2$ with $\Omega_i \subset \R^n$ open and connected. 
It follows from \eqref{eq:distributional_aij} and \eqref{eq:distributional_bij} that there exist measurable functions
$A_{ij}: \Omega_1 \to \R$ and $B_{ij}: \Omega_2 \to \R$ such that
$$ a_{ij} = A_{ij} \quad \text{and} \quad b_{ij} = B_{ij} \quad \text{almost everywhere}.$$
Let 
$$E_1 = \{ x' \in \Omega_1 : \sum_{i < j} A_{ij}^2(x') \ne 0 \},  \quad E_2 = \{ x'' \in \Omega_2 : \sum_{i < j} B_{ij}^2(x'') \ne 0 \}.$$
Then  \eqref{eq:chi_determines_a_b} implies that
$$ \chi = 1 \quad \text{a.e. on $E_1 \times \Omega_2$},  \quad  \chi = 0 \quad \text{a.e. on $\Omega_1 \times E_2$}.$$
Hence $E_1 \times E_2 =   (E_1 \times \Omega_2) \cap (\Omega_1 \times E_2)$ is a null set. It follows that  $E_1$ or $E_2$ is a null set.
If $E_1$ is a null set,  then
$$ f^* \alpha = \sum_{n+1 \le i < j \le 2n} b_{ij} dx_i \wedge dx_j  \quad \text{almost everywhere.}$$
In particular 
$$ 
f^* \alpha = 0 \quad  \text{almost everywhere in $\Omega_1 \times (\Omega_2 \setminus E_2)$.}
$$
Thus $\rank \g f \le  2n-1$ a.e. in $\Omega_1 \times (\Omega_2 \setminus E_2)$. Since by assumption $\g f$ is bijective a.e.,
the set $\Omega_2 \setminus E_2$ must be a null set. Hence $\sum_{i < j} b_{ij}^2 \ne 0$ a.e.,  and in view of 
\eqref{eq:chi_determines_a_b} this implies
that $\chi = 0$ a.e. 

If $E_2$ is a null set, then we show similarly that $\chi =1$ a.e.
This concludes the proof of \eqref{eq:claim_chi_new}.
\end{proof}

\bigskip
\subsection{Approximately split maps} \label{se:approximately_split}
We now turn to the proof of Theorem~\ref{th:approximate_solutions}.
We use the following result on the weak continuity of subdeterminants (or minors)

\begin{lemma}   \label{le:weak_continuity} Let $\Omega \subset \R^m$ be open and bounded. Let $M(F)$ be a $k \times k$ subdeterminant
of the $d \times m$ matrix $F$. Assume that 
$$ f_j \rightharpoonup f \quad \text{in $W^{1,k}(\Omega; \R^d)$}. $$
Then
$$ M(\g f_j) \overset{*}{\rightharpoonup} M(\g f) \quad \text{in measures,} $$
i.e,
$$ \int_\Omega M(\g f_j) \, \varphi \, dx \to  \int_\Omega M(\g f) \, \varphi \, dx   \quad \forall \varphi \in C_c(\Omega).$$
\end{lemma}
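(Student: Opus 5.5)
We argue by induction on $k$, the size of the minor, the case $k=1$ being immediate: $M(\g f_j)$ is then an entry of $\g f_j$, which converges weakly in $L^1$, hence in the sense of distributions and against $\varphi\in C_c(\Omega)$. The basic tool is the divergence structure of minors. By Lemma~\ref{le:pullback_closed}, or directly by expanding along a column, a $k\times k$ minor can be written in divergence form as
$$ M(\g f) = \sum_{l=1}^k (-1)^{l+1}\, \partial_{j_l}\bigl( f_{i_1}\, M_l(\g f)\bigr), $$
where $j_1,\dots,j_k$ are the columns and $M_l$ is the $(k-1)\times(k-1)$ cofactor minor in the rows $i_2,\dots,i_k$ and the columns other than $j_l$. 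Equivalently, $df_{i_1}\wedge\dots\wedge df_{i_k}=d\bigl(f_{i_1}\,df_{i_2}\wedge\dots\wedge df_{i_k}\bigr)$ holds weakly for $f\in W^{1,k}$. This identity is justified for smooth maps by the Piola identity $\sum_l (-1)^{l+1}\partial_{j_l} M_l=0$. It extends to $W^{1,k}_{\rm loc}$ by approximation: $M$ is continuous from $W^{1,k}$ to $L^1$, and $f_{i_1}M_l$ is continuous into $L^1$ because, by Hölder, $f\in L^{k}$ and $M_l\in L^{k/(k-1)}$.

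For $\varphi\in C_c^\infty(\Omega)$ we then have
$$\int_\Omega M(\g f_j)\varphi\,dx = -\sum_l (-1)^{l+1}\int_\Omega f_{j,i_1} M_l(\g f_j)\,\partial_{j_l}\varphi\,dx .$$
We pass to the limit using two facts. First, by Rellich, on a neighbourhood $U\Subset\Omega$ of $\operatorname{supp}\varphi$ with smooth boundary, $f_j\to f$ strongly in $L^k(U)$. Second, $M_l(\g f_j)$ is bounded in $L^{k/(k-1)}$, and by the induction hypothesis applied with exponent $k-1$ (weak convergence in $W^{1,k}$ implies weak convergence in $W^{1,k-1}$ on bounded $\Omega$) it converges to $M_l(\g f)$ in the sense of distributions. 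Boundedness plus distributional convergence upgrades this to weak convergence in $L^{k/(k-1)}(U)$ when $k\ge2$, since $k/(k-1)>1$ makes the space reflexive and the limit is identified by testing with smooth functions. Strong $L^k$ convergence times weak $L^{k'}$ convergence gives convergence of the products, so the identity passes to the limit and we obtain $\int M(\g f_j)\varphi\to\int M(\g f)\varphi$ for smooth $\varphi$.

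It remains to extend the convergence to $\varphi\in C_c(\Omega)$. This is a density argument: $M(\g f_j)$ is bounded in $L^1$ by Hadamard's inequality and $\sup_j\|\g f_j\|_{L^k}^k<\infty$. We approximate $\varphi$ uniformly by mollifications $\varphi_\varepsilon$ with support in a fixed compact set, and the error is at most $\|\varphi-\varphi_\varepsilon\|_\infty \sup_j\|M(\g f_j)\|_{L^1}$.

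The delicate point is the borderline integrability: the product $f_{i_1}M_l$ lies only just in $L^1$, so the argument depends on the compact embedding $W^{1,k}\hookrightarrow L^k$ being strong, combined with weak convergence in the dual exponent $k/(k-1)$ of the lower minors. This is also why the induction must track weak $L^{k/(k-1)}$ convergence of the $(k-1)$-minors rather than merely convergence in measures.
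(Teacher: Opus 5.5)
Your proposal is correct and follows the same route as the paper. Both argue by induction on $k$ using the divergence structure $df_{i_1}\wedge\dots\wedge df_{i_k}=d\bigl(f_{i_1}\,df_{i_2}\wedge\dots\wedge df_{i_k}\bigr)$ from Lemma~\ref{le:pullback_closed}, and both pass to the limit via strong $L^k$ convergence of $f_j$ against weak $L^{k/(k-1)}$ convergence of the $(k-1)$-minors. You simply make explicit the details the paper leaves to the reader or to Dacorogna: the Hölder justification, the Rellich step, the reflexivity upgrade, and the density extension from $C_c^\infty$ to $C_c$.
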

Equivalently, for every smooth $k$-form $\omega$ the sequence of pullbacks $f_j^* \omega$ converges weak* in measures
to $f^*\omega$. 

\begin{proof} 
This follows easily by induction over $k$ from the   fact that minors of order $k$ arise from the pullback of  the forms $dy^{i_1} \wedge \cdots \wedge dy^{i_k} = d(y^{i_1} \wedge dy^{i_2} \wedge \cdots \wedge dy^{i_k})$ and Lemma~\ref{le:pullback_closed}. For a proof which does not use differential forms,   see 
\cite[Theorem 8.20]{dacorogna_book08}.
\end{proof}

We also collect  some simple facts about minors which will be useful in the proof. 
If $I =(i_1, \ldots, i_r)$ with $1 \le i_1 < \ldots < i_r \le 2n$  and $J = (j_1, \ldots, j_r)$ with
$1 \le j_1 < \ldots < j_r \le 2n$  and  $F \in \R^{2n \times 2n}$ we denote by $F_{IJ}$ the submatrix
with rows $i_1, \ldots, i_r$ and column $j_1, \ldots, j_r$ and we set
$M_{IJ}(F) = \det F_{IJ}$. 

\begin{lemma}   \label{le:minor_facts}
\ben  
\item  \label{it:le_minor_expansion}
 Let $F \in \R^{2n \times 2n}$ and let $F' \in L$ be such that 
$|F- F'| = \dist(F, L)$. If $M$ is an $r \times r$ minor then 
\begin{equation}
|M(F) - M(F')| \le c (|F|^{r-1} \dist(F, L) + \dist^r(F, L));
\end{equation}
\item   \label{it:lemma_minor_converge}
If $M$ is an $r \times r$ minor which vanishes on $L$, $p \ge r$ and  $F_j : \Omega \to \R^{2n \times 2n}$ satisfies
 $$ \sup_j \|  F_j\|_{L^p} < \infty \quad \text{and}  \quad  \dist(F_j, L) \to 0 \quad \text{in $L^p(\Omega)$} $$
  then
\begin{equation} M(F_j) \to 0 \quad \text{in $L^{p/r}(\Omega)$;}
\end{equation}
\item  \label{it:lemma_minor_off_diagonal}
Assume that  $F$ has the block-diagonal form 
\begin{equation} \label{eq:lemma_minor_block}
F =  \begin{pmatrix} A & B \\ C & D \end{pmatrix} \quad \text{with $A, B, C, D \in \R^{n \times n}$}
\end{equation}
and $\dist(F, L_2) < \dist (F, L_1)$. Then 
\begin{align} 
& \, | \det F - (-1)^n \det B \det C| \\
 \le & \,  c' (  |F|^{2n-1} \, \dist(F, L) + \dist^{2n}(F,L)). \nonumber 
\end{align}
\item  \label{it:lemma_minor_two_minors} Consider the $n \times 2n$ matrix
$G = \begin{pmatrix} A & B \end{pmatrix}$.  Let 
\begin{align} \label{eq:define_Lprime_minor}
 L' = \{ &G :  M_{IJ}(G) = 0, \, \text{whenever $I = (i_1, i_2)$ with  $1 \le i_1 < i_2 \le n$} \\
 &\text{and $J = (j_1, j_2)$ with  $1 \le j_1 \le n < j_2 \le 2n$} \}. \nonumber
 \end{align}
 Then  $ G \in L'$ if and only if 
 \begin{equation} \label{eq:vanishing_two_minors_on_L}
 \rank G = 1 \quad \text{or} \quad A = 0 \quad \text{or} \quad B=0.
 \end{equation}
\item \label{it:lemma_minor_characterization} Assume $n\geq 2$. Let $F$ be as in \eqref{eq:lemma_minor_block} with $\begin{pmatrix} A & B \end{pmatrix}\in L'$ and $\begin{pmatrix} C & D \end{pmatrix}\in L'$ and $\det F\neq 0$. Then $F\in L$. In particular, if $F\in \R^{2n\times 2n}$ is nonsingular and $M(F)=0$ for all $2\times 2$ minors $M$ vanishing on $L$, then $F\in L$.
\een
\end{lemma}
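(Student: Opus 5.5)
I will prove the five items in order. Each is elementary linear algebra, and each later item uses the earlier ones.

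\emph{Item (1).} Write $F = F' + E$ with $|E| = \dist(F,L)$. Since $0 \in L$, we have $|F'| \le |F| + |E| \le 2|F|$. The minor $M$ is a homogeneous polynomial of degree $r$, multilinear in the columns of the submatrix. Expanding $M(F'+E)$ by multilinearity, every term except $M(F')$ contains $s \ge 1$ columns taken from $E$ and $r-s$ taken from $F'$. Such a term is bounded by $c\,|E|^s |F'|^{r-s} \le c\,|E|^s (2|F|)^{r-s}$. For $1<s<r$, Young's inequality gives $|E|^s|F|^{r-s} \le |E||F|^{r-1} + |E|^r$, and the claimed estimate follows.

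\emph{Item (2).} Since $M$ vanishes on $L$, we have $M(F_j') = 0$. Item (1) then gives $|M(F_j)| \le c(|F_j|^{r-1}\dist(F_j,L) + \dist^r(F_j,L))$. Apply H\"older's inequality with exponents $p/(r-1)$ and $p$:
$$\| |F_j|^{r-1}\dist\|_{L^{p/r}} \le \|F_j\|_{L^p}^{r-1}\,\|\dist(F_j,L)\|_{L^p} \to 0,$$
and also $\|\dist^r\|_{L^{p/r}} = \|\dist\|_{L^p}^r \to 0$.

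\emph{Item (3).} The hypothesis $\dist(F,L_2) < \dist(F,L_1)$ means the nearest point $F' \in L$ lies in $L_2$. Then $\det F' = (-1)^n \det B' \det C'$; the sign is that of the block permutation swapping the two groups of $n$ coordinates. Apply item (1) with $r=2n$ to $M = \det$, which gives $|\det F - \det F'| \le c(\cdots)$. The two functions of $F$ given by $\det B\det C$ and $\det B'\det C'$ must also be compared. I will handle this by applying the multilinear expansion of item (1) to the degree-$2n$ polynomial $F \mapsto \det B \det C$: this polynomial is multilinear in the columns of $\begin{pmatrix} B \\ 0\end{pmatrix}$ and $\begin{pmatrix} 0\\ C\end{pmatrix}$, and $|F-F'| = \dist(F,L)$, so the same estimate holds. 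The triangle inequality then finishes the proof.

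\emph{Item (4).} The main obstacle is the ``only if'' direction. The ``if'' direction is easy. If $A=0$ or $B=0$, every mixed minor has a zero column. If $\rank G = 1$, every $2\times2$ minor vanishes.

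For the converse, suppose $G \in L'$, $A \ne 0$ and $B \ne 0$; we must show that $\rank G = 1$. The condition says that for every column $a$ of $A$ and every column $b$ of $B$, the $2\times 2$ minors of the $n\times 2$ matrix $(a\,|\,b)$ vanish, i.e.\ $a$ and $b$ are parallel. Pick a nonzero column $b_0$ of $B$. Every column of $A$ is then a multiple of $b_0$. Pick a nonzero column $a_0$ of $A$. Every column of $B$ is parallel to $a_0$, which is itself parallel to $b_0$. So all columns of $G$ lie on the line $\R b_0$, and $\rank G = 1$.

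\emph{Item (5).} Now assume $\det F \ne 0$. Then the top block $(A\ B)$ has rank $n \ge 2$, and so does the bottom block $(C\ D)$. Since $(A\ B) \in L'$ and its rank is not $1$, item (4) gives $A=0$ or $B=0$. Likewise $C=0$ or $D=0$.

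Two of the four combinations are impossible. The combination $A=C=0$ would make the first $n$ columns of $F$ vanish. The combination $B=D=0$ would make the last $n$ columns vanish. Either contradicts $\det F \ne 0$. The remaining combinations are $B=C=0$, giving $F\in L_1$, and $A=D=0$, giving $F \in L_2$.

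For the final sentence of item (5), note that the minors $M_{IJ}$ appearing in the definition of $L'$ vanish on $L$. This holds because on $L_1$ the block $B$ is zero, so such a minor has a zero column, and on $L_2$ the block $A$ is zero, with the same consequence. The same is true for the corresponding minors of $(C\ D)$. Hence the hypotheses of the first part of item (5) are satisfied, and $F \in L$.
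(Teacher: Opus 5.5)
Your proof is correct. Items (1), (2) and (5) follow the paper: homogeneity of minors plus Young's inequality, then H\"older, and then the same case analysis ($A=0$ or $B=0$, together with $C=0$ or $D=0$). The differences are in (3) and, more substantively, in (4).

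\emph{Item (4).} The paper uses a normal form. It asserts that both $L'$ and the conclusion are invariant under $G\mapsto PGQ$, with $P\in GL(n)$ and $Q$ block-diagonal invertible. It then reduces to $A=\sum_{i=1}^r e_i\otimes e_i$, reads off the entries of $B$ from a few specific minors, and treats $r=1$ and $r\ge 2$ separately. You instead read the defining minors of $L'$ directly: every column of $A$ is parallel to every column of $B$. You then close the argument by passing through one nonzero column of each block. Your route is shorter and needs no case split on $\rank A$. It also avoids the invariance of $L'$ under block-diagonal right multiplication, which the paper states without proof (a small Cauchy--Binet computation is needed), and it makes the geometric content of $L'$ explicit.

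\emph{Item (3).} The paper applies (1) separately to the $n\times n$ minors $\det B$ and $\det C$, then combines the two estimates with a product bound and Young. You expand the degree-$2n$ polynomial $\det B\det C$ in one step; the two are equivalent. If $|\cdot|$ is the Euclidean (Frobenius) norm, the nearest point of the linear subspace $L_2$ is the orthogonal projection $\begin{pmatrix}0&B\\C&0\end{pmatrix}$. Then $B'=B$ and $C'=C$, so (3) is just (1) applied to $\det$, and neither comparison step is needed.
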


\begin{proof} \eqref{it:le_minor_expansion}: This follows from the fact that $M$ is a homogeneous polynomial of degree $r$ and Young's inequality.

\eqref{it:lemma_minor_converge}: This follows directly from \eqref{it:le_minor_expansion}.

 \eqref{it:lemma_minor_off_diagonal}:  There exists $F' \in L_2$ such that 
 $$ |F- F'| = \dist(F, L_2) = \dist(F, L).$$
 If we write $F'$ in block diagonal form with block matrices $0, B', C', 0$
 then $\det F' = (-1)^n \det B' \det C'$. Now
 $$ |B - B'| + |C - C'| \le c |F-F'| = c \dist(F, L)$$
 for some constant $c$. Thus the assertion follows by applying    \eqref{it:le_minor_expansion} to $\det$ and
 to the minors which correspond to the determinant of the submatrices $B$ and $C$. 
 
  \eqref{it:lemma_minor_two_minors}:  If the condition  \eqref{eq:vanishing_two_minors_on_L} holds, then clearly $G \in L'$.
  For the converse implication note that
the assumptions and the conclusion are invariant under multiplication of $G$ by non-singular $n \times n$ matrices
on the left and by non-singular block-diagonal $2n \times 2n$ matrices on the right. 
Thus, if $A \ne 0$,  we may assume that $A$ is diagonal with entries $1$ or $0$ on the diagonal, i.e.,
 $A = \sum_{i=1}^r  e_i \otimes e_i$. Using the minors with $i_1=1$ and $j_1=1$ we see  that  $B_{jl'} = 0$ if $j \ge 2$
 and $l' = l-n \ge 1$.   If $r = 1$ then it follows that $\rank G = 1$. If $r \ge 2$ then we can also use the minors with
 $i_1=2$ and $j_2=2$ and we deduce that $B= 0$. 

 \eqref{it:lemma_minor_characterization}: First of all, from \eqref{it:lemma_minor_two_minors} we deduce 
$$ \rank \begin{pmatrix} A & B \end{pmatrix} = 1 \quad \text{or} \quad A = 0 \quad \text{or} \quad B=0$$
and similarly
$$ 
\rank \begin{pmatrix} C & D \end{pmatrix} = 1 \quad \text{or} \quad C = 0 \quad \text{or} \quad D=0.
$$
Further, since $\det F \ne 0$, we have $ \rank \begin{pmatrix} A & B \end{pmatrix} = n$ and
$ \rank \begin{pmatrix} C& D \end{pmatrix} = n$. Moreover, we cannot have $A=C=0$ or $B=D=0$.
Thus,  we necessarily have $A=D=0$ or $B=C=0$. Hence $F \in L$.
  \end{proof}

\begin{proof}[Proof of Theorem~\ref{th:approximate_solutions}]
In both arguments, the  key observation is that along sequences which satisfy $\dist(\g f_j, L) \to 0$ in $L^{2n}$ there is an   additional
weakly continuous expression which agrees with $\det$ on $L_2$ and vanishes on $L_1$, see Step 2 of the proof and  Lemma~\ref{le:cc}  below.

We first show the assertions of the theorem under the additional assumption
\begin{equation}  \label{eq:approx_equiintegrable}
\text{the sequence  $|\g f_j|^{2n}$ is equiintegrable.} 
\end{equation}
Recall that  a  sequence of $L^1$ functions $h_j$ is equiintegrable if for every $\eps > 0$ there exists a
$\delta > 0$ such that $|A| < \delta$ implies $\sup_j \int_A |h_j| < \eps$. The Dunford-Pettis theorem shows that
if $\Omega$ has finite measure  and if the sequence  $h_j: \Omega \to \R^s$ is equiintegrable then $h_j$ has a subsequence which converges
weakly  in $L^1(\Omega)$. 

{\bf Step 1:} \emph{If the additional assumption  \eqref{eq:approx_equiintegrable} holds, then $f$ is split.}\\
We first claim that $\det \g f > 0$ a.e.
  It suffices to show that 
 \begin{equation}  \label{eq:criterion_det_positive}
 \int_U \det \g f \, dx > 0 \quad \text{for all measurable $U \subset \Omega$ with $|U| > 0$.}
 \end{equation}

Fix a measurable $U \subset \Omega$ with $|U| > 0$. 
Lemma~\ref{le:weak_continuity} yields the convergence $\det \g f_j \overset{*}{\rightharpoonup} \det \g f$ weak*
in measures.
In view of  \eqref{eq:approx_equiintegrable} and the Dunford-Pettis theorem we get
\begin{equation}  \label{eq:approx_weakL1_det}
\det \g f_j   \rightharpoonup \det \g f \quad \text{in $L^1(\Omega)$}
\end{equation}
and in particular 
\begin{equation}  \label{eq:weak_L1_U}
 \lim_{j \to \infty} \int_U \det \g f_j \, dx  = \int_U \det \g f \, dx.
\end{equation}
Set $E_{j, \delta} = \{ x \in \Omega : \det \g f_j < \delta \}$.
By assumption $\lim_{\delta \downarrow 0} \limsup_{j \to \infty} |E_{j, \delta}|= 0$.
Hence the equi-integrability of the sequence  $\det \g f_j$ implies that
\begin{equation}   \label{eq:weak_det_bad}
\lim_{\delta \downarrow 0} \liminf_{j \to \infty}  \int_{U \cap E_{j,\delta}} \det \g f_j \, dx = 0.
\end{equation}
Moreover, there exists a $\delta_0 > 0$ such that   $\limsup_{j \to \infty} |E_{j, \delta_0}| \le |U|/2$.
Thus 
\begin{equation*}  \label{eq:weak_det_good}
\lim_{\delta \downarrow 0}   \liminf_{j \to \infty}  \int_{U \setminus E_{j,\delta}} \det \g f_j\, dx
\ge     \liminf_{j \to \infty}  \int_{U \setminus E_{j,\delta_0}} \det \g f_j\, dx
 \ge  \frac12  \delta_0 |U|.
\end{equation*}
Adding  
  \eqref{eq:weak_det_bad}  to this inequality and using 
\eqref{eq:weak_L1_U} we see that $\int_U \det \g f \, dx > 0$.

We now show $\g f \in L$ a.e. Then Theorem~\ref{th:split} implies that $f$ is 
split.
By decomposing  $\Omega$ into the sets $\{ |\g f_j| \le M\}$ and $\{ |\g f_j| > M\}$ we easily see that   the assumption $\dist(\g f_j, L) \to 0$ in $L^1(\Omega)$ and equiintegrability of $|\g f_j|^{2n}$ imply that 
\begin{equation}  \label{eq:convergence_dist_L2n}
\dist(\g f_j, L) \to 0 \quad \text{in $L^{2n}(\Omega)$}.
\end{equation}

The matrix $\g f(x)$ has the block decomposition
\begin{equation*}
\g f(x) = \begin{pmatrix} A & B \\ C & D \end{pmatrix} \quad \text{with $A,B,C,D \in \R^{n \times n}$.}
\end{equation*}
It follows from 
 \eqref{eq:convergence_dist_L2n} and 
Lemma~\ref{le:minor_facts}~\eqref{it:lemma_minor_converge}
that  $M(\g f_j) \to 0$ in $L^1(\Omega)$  for all $2 \times 2$ minors which vanish on
$L$. Together with the weak continuity of minors we deduce that
$$ 
M(\g f) = 0 \quad \text{for all $2 \times 2$ minors which vanish on $L$.}
$$
Since also $\det \g f \ne 0$ a.e., we deduce from Lemma \ref{le:minor_facts} \eqref{it:lemma_minor_characterization} that $\nabla f \in L$ a.e. Thus,  by 
Theorem~\ref{th:split}, $f$ is  split. In particular, either $\nabla f \in L_1$ a.e. or $\nabla f \in L_2$ a.e.

\medskip

  {\bf Step 2:} \emph{If the additional assumption  \eqref{eq:approx_equiintegrable} holds, then $\dist(\g f_j, L_i) \to 0$ 
in $L^{2n}(\Omega)$  for $i=1$ or $i=2$.} \\ 
Assume for definiteness that $\g f \in L_1$ a.e
(the case $\g f \in L_2$ a.e. is analogous).
 Write
\begin{equation*}
\g f_j(x) = \begin{pmatrix} A_j & B_j \\ C_j & D_j \end{pmatrix} \quad \text{with $A_j,B_j,C_j,D_j \in \R^{n \times n}$.}
\end{equation*}
The key observation is that
 \eqref{eq:convergence_dist_L2n} implies that
\begin{equation}  \label{eq:weak_continuity_detBC}
\det B_j \det C_j \overset{*}{\rightharpoonup}  \det B \det C = 0 \quad \text{in $\mathcal M(\Omega)$,}
\end{equation}
see Lemma~\ref{le:cc} below.
Since $|\g f_j|^{2n}$ is equiintegrable so is $\det B_j \det C_j$ and thus we get
\begin{equation}
 \label{eq:weak_continuity_detBC_L1}
 \det B_j \det C_j \rightharpoonup  0  \quad \text{in $L^1(\Omega)$.}
 \end{equation}

Define
 $$\chi_j(x)  := 
 \begin{cases} 1 & \text{if $\dist(\g f_j(x), L_2) < \dist(\g f_j(x), L_1)$,} \\
 0 & \text{if  $\dist(\g f_j(x), L_2) \ge \dist(\g f_j(x), L_1)$.}
 \end{cases}
 $$ 

 Note that for $\chi_j(x) = 0$ we have $\dist(\g f_j, L_1) = \dist(\g f_j, L)$.
    and thus 
 \begin{equation}  \label{eq:strong_conv_Df_j}
  (1- \chi_j) \dist(\g f_j, L_1) =  (1- \chi_j) \dist(\g f_j, L)
   \to 0 \quad \text{in $L^{2n}(\Omega)$}
 \end{equation}
by   \eqref{eq:convergence_dist_L2n}.
 Thus it suffices to show that 
 \begin{equation} \label{eq:approximate_convergence_chij}
 \chi_j \to 0 \quad \text{in $L^1(\Omega)$.}
 \end{equation}
 Indeed, since $\chi_j$ is a characteristic function, the equiintegrability of $\dist^{2n}(\g f_j, L_1)$ implies that
 \begin{align*}
&  \lim_{j \to \infty}\int_{\Omega}   (1- \chi_j) \dist^{2n}(\g f_j, L_1) \, dx = 0.
 \end{align*}
To show that $\chi_j \to 0$ in $L^1(\Omega)$ 
  we note that  \eqref{eq:strong_conv_Df_j}
 implies
 \begin{equation}  \label{eq:strong_convergence_detBC}
 (1- \chi_j) \det B_j \det C_j \to 0 \quad \text{in $L^1(\Omega)$.}
 \end{equation}
 Combining this with  \eqref{eq:weak_continuity_detBC_L1} we get
  \begin{equation}  \label{eq:weak_convergence_detBC_chi}
 \chi_j \det B_j \det C_j \rightharpoonup 0 \quad \text{in $L^1(\Omega)$.}
 \end{equation}
 By Lemma~\ref{le:minor_facts}~\eqref{it:lemma_minor_off_diagonal}
 \begin{equation}  \label{eq:detF_detBC}
 \chi_j (\det \g f_j - (-1)^n \det B_j \det C_j) \to 0 \quad \text{in $L^1(\Omega)$.}
 \end{equation} and thus
 \begin{equation} \label{eq:weak_convergence_chi_det}
 \chi_j \det \g f_j \rightharpoonup 0 \quad \text{in $L^1(\Omega)$.}
 \end{equation}
 
 We now show that this implies that $\chi_j \to 0$ in $L^1(\Omega)$.
 Recall that $E_{j, \delta} = \{ \det \g f_j < \delta\}$. 
Thus, for any $\delta' > 0$, 
\begin{align*}
& \, \delta' \limsup_{j \to \infty} \int_{\Omega \setminus E_{j, \delta'}} \chi_j  \, dx \\
\le & \,   \limsup_{j \to \infty} \int_{\Omega \setminus E_{j, \delta'}} \chi_j \det \g f_j  \, dx\\
\le & \lim_{ \delta\downarrow  0} \,   \limsup_{j \to \infty} \int_{\Omega \setminus E_{j, \delta}} \chi_j \det \g f_j  \, dx\\
\underset{ \eqref{eq:weak_det_bad}}{\le} & \,   \limsup_{j \to \infty}  \int_{\Omega} \chi_j \det \g f_j  \, dx   \underset{  \eqref{eq:weak_convergence_chi_det}}{=}  \, 0.
\end{align*}
Dividing by $\delta'$ and using the assumption $\lim_{\delta' \downarrow 0} \limsup_{j \to \infty} |E_{j,\delta'}| = 0$ we see that
$\chi_j \to 0$ in $L^1(\Omega)$.

\medskip

 {\bf Step 3:}  \emph{Removal of the additional assumption  \eqref{eq:approx_equiintegrable}.}\\ 
 Now we only assume the hypotheses of Theorem~\ref{th:approximate_solutions}, i.e.,   
 \begin{itemize}
     \item $f_j \rightharpoonup f$ in $W^{1,2n}(\Omega)$,
     \item $\dist (\g f_j, L) \to 0$ in $L^1(\Omega)$,
     \item $\lim_{\delta \downarrow 0} \limsup_{j \to \infty}  | \{ x \in \Omega : \det \g f_j(x) < \delta \}| = 0$.
 \end{itemize} 
We first note that 
it suffices to show that $f$ is split, that $\det \g f > 0$ a.e.\ and there exists an $i \in \{1, 2\}$  such that for a {\it subsequence}
$$
\lim_{k \to \infty} \| \dist (\g f_{j_k}, L_i)\|_{L^q(\Omega)} = 0.
$$
Indeed,  this convergence implies that $\g f \in L_i$ a.e.
Now, if the full sequence $\dist(\g f_j,L_i)$ does not converge to zero in $L^q(\Omega)$ then there exists  another  subsequence 
$m_k$ and an $\eta > 0$ such that $\| \dist(\g f_{m_k}, L_i) \|_{L^q(\Omega)} \ge \eta$. Since the subsequence $k \mapsto f_{m_k}$
still satisfies the assumptions of Theorem~\ref{th:approximate_solutions} there exist a $j \in \{1,2\}$ and a further subsequence
such that $\dist(\g f_{m_{k_l}}, L_j) \to 0$ in $L^q(\Omega)$. Thus $j \ne i$ and $\g f \in L_j$ a.e.
It follows that $\g f \in L_i \cap L_j = \{0\}$ a.e. This contradicts the fact that $\det \g f > 0$ a.e.

We use the following result which assures that (after passage to a subsequence) we may replace $f_j$ by a
sequence $g_j$ which satisfies  \eqref{eq:approx_equiintegrable} and differs from $f_j$ only on  a set whose
measure goes to zero as $j \to \infty$.

\begin{lemma}[\cite{fonseca_muller_pedregal98}, Lemma 1.2] 
\label{le:FMP}  Let $\Omega \subset \R^m$ be bounded and open and let  $f_j$ be a sequence which is bounded in $W^{1,p}(\Omega; \R^d)$. There exists a subsequence $f_{j_k}$  (not relabelled) and sequence $g_k$
such that $|\g g_k|^p$ is equiintegrable and
\begin{equation} 
\lim_{k \to \infty} | \{ g_k \ne f_{j_k} \quad \text{or} \quad \g g_k \ne \g f_{j_k} \}|  = 0.
\end{equation}
\end{lemma}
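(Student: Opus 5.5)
The plan is to prove the lemma by truncating $f_j$ with the Lipschitz truncation technique, choosing the truncation level by the biting lemma / Chacon decomposition.

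\textbf{Step 1 (subsequence and thresholds).} Since $|\g f_j|^p + |f_j|^p$ is bounded in $L^1(\Omega)$, the biting lemma (equivalently, the decreasing rearrangement argument of Fonseca--M\"uller--Pedregal) gives a subsequence and levels $M_k \to \infty$ such that the truncations $\min(|f_{j_k}| + |\g f_{j_k}|, M_k)^p$ are equiintegrable. Concretely, take the subsequence so that the distribution functions of $|\g f_j|$ converge, i.e.\ they generate a Young measure for the scalar variable. Then choose $M_k\to\infty$ slowly enough that $M_k^p\, |\{|\g f_{j_k}| > M_k\}| \to 0$ and $\int_{\{|\g f_{j_k}|\le M_k\}} |\g f_{j_k}|^p$ is equiintegrable.

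\textbf{Step 2 (Lipschitz truncation).} Extend $f_j$ to $\R^m$. If $\Omega$ is not an extension domain, work on an exhaustion by smooth compactly contained subdomains, absorbing the boundary layer into a set of small measure; cutting off $f_j$ near $\partial\Omega$ by a fixed $\eta_k$ changes it only on a set of small measure. Consider the maximal function $\mathcal M(|\g f_j| + |f_j|)$ and the set $R_k = \{\mathcal M(\ldots) \le \lambda M_k\}$. Pointwise Lipschitz estimates $|f(x)-f(y)| \le C|x-y|(\mathcal M|\g f|(x) + \mathcal M|\g f|(y))$ show that $f_{j_k}$ is $C\lambda M_k$-Lipschitz on $R_k$. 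Extend it by Kirszbraun / McShane to $g_k$ with $|\g g_k| \le C M_k$ everywhere and $g_k = f_{j_k}$ on $R_k$. At Lebesgue points of $R_k$ we also have $\g g_k = \g f_{j_k}$ a.e.

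\textbf{Step 3 (estimates).} First, $|\Omega\setminus R_k| \to 0$ by the weak-type bound for $\mathcal M$ applied to the truncated pieces. This needs care: I would split $|\g f| = h_1 + h_2$, where $h_1$ is the part at most $M_k$ and $h_2$ is the large part. The strong $L^p$ bound applied to $h_1$, together with $\|h_2\|_{L^1}$ small, gives $|\{\mathcal M h \ge \lambda M_k\}| \le C M_k^{-p}\int |h|^p \to 0$ since $M_k\to\infty$. Second, equiintegrability: on $R_k$, $|\g g_k| = |\g f_{j_k}|$, and off $R_k$, $|\g g_k|^p \le C M_k^p$. So for $|A|$ small,
\[
\int_A |\g g_k|^p \le \int_{A\cap R_k} |\g f_{j_k}|^p\, \mathbf 1_{\{|\g f_{j_k}|\le \lambda M_k\}} + C M_k^p|\Omega\setminus R_k| .
\]
On $R_k$ we automatically have $|\g f_{j_k}| \le \mathcal M \le \lambda M_k$. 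The first term is controlled by the equiintegrability from Step 1.

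\textbf{Main obstacle.} The hard part is the second term: showing $M_k^p |\{\mathcal M(|\g f_{j_k}|) > \lambda M_k\}| \to 0$, not merely boundedness. For this I would use the weak-$(1,1)$ estimate restricted to the large part, $|\{\mathcal M h > t\}| \le \tfrac{C}{t}\int_{\{h>t/2\}} h$, giving
\[
M_k^p|\{\mathcal M h > \lambda M_k\}| \le C M_k^{p-1}\int_{\{h > M_k\}} h \le C\int_{\{h>M_k\}} h^p ,
\]
and then choose $M_k$ in Step 1 precisely so that $\int_{\{|\g f_{j_k}|>M_k\}} |\g f_{j_k}|^p$ stays bounded while its product with the measure decays. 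Strictly, I would instead choose via the biting lemma so that the concentrating part is exactly what lives above $M_k$, and then use the $L^p$ (not weak) bound on $h_1$ plus the above on $h_2$. Getting these two choices consistent is the delicate point.
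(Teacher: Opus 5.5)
The paper gives no proof of this lemma; it is quoted from \cite{fonseca_muller_pedregal98}. Your outline is the standard strategy: truncation levels $M_k$ chosen by a Young measure/biting diagonal argument, then Lipschitz truncation of $f_{j_k}$ on the good set $R_k=\{\mathcal M h_k\le \lambda M_k\}$, where $h_k=|f_{j_k}|+|\g f_{j_k}|$.

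There is, however, a genuine gap exactly at the point you flag. Since $|\g g_k|\le CM_k$ off $R_k$, equiintegrability of $|\g g_k|^p$ requires $M_k^p\,|\Omega\setminus R_k|\to 0$, and you never establish this. Your proposed bound $M_k^p|\{\mathcal M h_k>\lambda M_k\}|\le C\int_{\{h_k>M_k\}}h_k^p$ is only $O(1)$. It becomes $o(1)$ only when there is no concentration. Indeed, if $\int_{\{h_k>M_k\}}h_k^p\to 0$ and $\min(h_k,M_k)^p$ is equiintegrable, then $h_k^p\le \min(h_k,M_k)^p+h_k^p\chi_{\{h_k>M_k\}}$ is already equiintegrable and no truncation is needed. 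So neither ``stays bounded'' nor a choice of $M_k$ that makes this term vanish can close the argument.

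The missing idea is to split the large part once more and exploit $p>1$. For $t=M_k$ and $L\ge 1$ write
$h_k=h_k\chi_{\{h_k\le t\}}+h_k\chi_{\{t<h_k\le Lt\}}+h_k\chi_{\{h_k>Lt\}}$.
The first piece has maximal function at most $t$. The second piece has $L^1$ norm at most $Lt\,|\{h_k>t\}|$, and the third has $L^1$ norm at most $(Lt)^{1-p}\|h_k\|_{L^p}^p$. Applying the weak-$(1,1)$ estimate to these two pieces gives
\[
M_k^p\,|\{\mathcal M h_k>3M_k\}|\le C L\, M_k^p\,|\{h_k>M_k\}|+C L^{1-p}\sup_j\|h_j\|_{L^p}^p .
\]
The first term tends to zero by your Step 1, since $M_k^p|\{h_k>M_k\}|=\int_{\{h_k>M_k\}}\min(h_k,M_k)^p$ and $|\{h_k>M_k\}|\to0$. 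Letting $L\to\infty$ afterwards gives the claim with $\lambda=3$. The good-set term is then controlled by $\min(h_k,3M_k)^p\le 3^p\min(h_k,M_k)^p$.

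The factor $L^{1-p}$ is where $p>1$ enters, and this is not an artifact: the lemma is false for $p=1$ (consider $u_j(x)=\min\{\max\{jx,0\},1\}$ on $(-1,1)$). Your write-up should state this assumption explicitly; it holds in the application, where $p=2n$.

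Finally, your reduction to an extension domain is only sketched. Modifying $f_j$ in a thin boundary layer is harmless for the measure condition. You still have to show that the gradients there are equiintegrable uniformly in $k$, including the term $f_j\g\eta_k$ with $|\g\eta_k|\to\infty$ and any concentration near $\partial\Omega$.
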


We apply this lemma with $p = 2n$.

Let $E_k := \{ g_k \ne f_{j_k}  \quad \text{or}  \quad \g g_k \ne Df_{j_k} \}$. Since $|E_k| \to 0$ as $k \to \infty$ and since $\g f_{j_k}$
and $\g g_k$ are bounded in $L^{2n}$   we easily see that 
$$ g_k \rightharpoonup f \quad \text{in $W^{1,2n}(\Omega; \R^{2n})$} \quad \text{and} 
\quad \dist(\g g_k, L) \to 0 \quad \text{in $L^1(\Omega)$}.$$
Moreover, since $|E_k| \to 0$ we get from  assumption   \eqref{eq:lower_bound_det}
$$ \lim_{\delta \downarrow 0} \limsup_{j \to \infty}  |\{ x \in \Omega : \det \g g_k < \delta \} | = 0.$$

Thus our previous reasoning in Step 1 and Step 2  applies to the sequence  $g_k$  and we deduce that the  weak limit $f$ satisfies $\det \g f > 0$,
$f$ is globally split, and 
$$ \dist(\g g_k, L_i) \to 0 \quad \text{in $L^{2n}(\Omega)$} $$
for $i=1$ or $i=2$.
Since $\dist(\g f_{j_k}, L_i)$ is bounded in $L^{2n}$ and $|E_k| \to 0$ we obtain
$\dist(\g f_{j_k}, L_i) \to 0$ in $L^q(\Omega)$ for all $q < 2n$. This concludes the proof of 
Theorem~\ref{th:approximate_solutions}.
\end{proof}

\begin{lemma} \label{le:cc}
Let $\Omega \subset \R^{2n}$ be bounded and open. Assume that $f_j \rightharpoonup f$ in $W^{1,2n}(\Omega, \R^{2n})$
and $\dist(\g f_j, L) \to 0$ in $L^{2n}$. Express $\g f_j$ as a block-diagonal matrix,
$$ Df_j = \begin{pmatrix} A_j & B_j \\ C_j & D_j  \end{pmatrix}$$
and similarly for $\g f$. Then
\begin{eqnarray} \label{eq:weak_continuity_detA_detD}
\det A_j \det D_j &\overset{*}{\rightharpoonup} \det A \det D     \quad \text{in $\mathcal M(\Omega)$,} \\
\det B_j \det C_j &\overset{*}{\rightharpoonup} \det B \det C  \label{eq:weak_continuity_detB_detC}
 \quad \text{in $\mathcal M(\Omega)$.}
\end{eqnarray}
\end{lemma}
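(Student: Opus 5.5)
The plan is to express $\det A_j \det D_j$ and $\det B_j\det C_j$, up to errors that vanish strongly in $L^1$, as sums of products of minors whose weak limits are known by Lemma~\ref{le:weak_continuity}. The natural tool is the Laplace expansion of $\det \g f_j$ along the first $n$ rows:
\begin{equation*}
\det F = \sum_{J} \pm M_{I_0 J}(F)\, M_{I_0^c J^c}(F), \qquad I_0=(1,\dots,n),
\end{equation*}
where $J$ ranges over increasing $n$-tuples. The term with $J=(1,\dots,n)$ is $\det A\det D$. The term with $J=(n+1,\dots,2n)$ is $(-1)^n\det B\det C$. In every other term $J$ mixes columns from both blocks. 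For such mixed $J$, the minor $M_{I_0J}$, and likewise $M_{I_0^cJ^c}$, vanishes on $L$. Indeed, a matrix in $L_1$ has $B=0$, so columns of $(A\ B)$ with index $>n$ vanish; a matrix in $L_2$ has $A=0$, so columns with index $\le n$ vanish.

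\textbf{Step 1: the mixed terms vanish in the limit.} By Lemma~\ref{le:minor_facts}~\eqref{it:lemma_minor_converge} with $p=2n$ and $r=n$, each mixed minor $M_{I_0J}(\g f_j)\to 0$ in $L^2$. The complementary minor is bounded in $L^2$ by Hölder's inequality. Hence each mixed product tends to $0$ strongly in $L^1$.

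\textbf{Step 2: the determinant identity.} Step 1 gives
$\det \g f_j - \det A_j\det D_j - (-1)^n\det B_j\det C_j \to 0$ in $L^1$. By Lemma~\ref{le:weak_continuity}, $\det\g f_j\overset{*}{\rightharpoonup}\det\g f$. So this identity determines only the sum of the two target expressions, and I need a second relation to separate them.

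\textbf{Step 3: separating the two products.} The idea is to use a weakly continuous quantity that equals $\det A\det D$ on $L_1$ and vanishes on $L_2$. Each of $\det A_j$ and $\det D_j$ is an $n\times n$ minor, so it converges weak* individually. However, a product of two weakly converging sequences need not converge to the product of the limits, so their product is the crux.

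I would handle this with differential forms. Take $\alpha = dy_1\wedge\cdots\wedge dy_n$ and expand
\begin{equation*}
f_j^*\alpha=\sum_J M_{I_0J}(\g f_j)\,dx_J .
\end{equation*}
Split this as $a_j\, dx_1\wedge\cdots\wedge dx_n + b_j\, dx_{n+1}\wedge\cdots\wedge dx_{2n}+\rho_j$, where $a_j=\det A_j$, $b_j=\pm\det B_j$, and $\rho_j\to0$ in $L^2$ by Step 1. Weak closedness from Lemma~\ref{le:pullback_closed} says that
\begin{equation*}
d\bigl(a_j\, dx_1\wedge\cdots\wedge dx_n+b_j\, dx_{n+1}\wedge\cdots\wedge dx_{2n}\bigr) = -d\rho_j
\end{equation*}
is compact in $H^{-1}_{\rm loc}$. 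Testing against $\beta=\varphi\,dx_{I}$ for suitable $(n-1)$-index sets $I$ separates the $x''$-derivatives of $a_j$ from the $x'$-derivatives of $b_j$. This shows that $\partial_{x''}a_j$ and $\partial_{x'}b_j$ are each compact in $H^{-1}$. The same holds for the analogous coefficients $d_j=\det D_j$ and $c_j=\pm\det C_j$ of $f_j^*(dy_{n+1}\wedge\cdots\wedge dy_{2n})$, where $d_j$ multiplies $dx_{n+1}\wedge\cdots\wedge dx_{2n}$ and $c_j$ multiplies $dx_1\wedge\cdots\wedge dx_n$.

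I then apply a div–curl/compensated compactness lemma to the pair $(a_j,d_j)$: in $x''$ the function $a_j$ is compact, in $x'$ we control $d_j$ via $\partial_{x'}d_j$. What is actually needed is the anisotropic version: if $\partial_{x''}u_j$ and $\partial_{x'}v_j$ are precompact in $H^{-1}$, and $u_j$, $v_j$ are bounded in $L^2$, then $u_jv_j\overset{*}{\rightharpoonup}uv$. This is the classical Murat–Tartar result, since the wave cones $\{\xi''=0\}$ and $\{\xi'=0\}$ intersect only in $0$. It can be proved by Fourier localization, or by writing $u_jv_j = \operatorname{div}$-type expressions.

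\textbf{Main obstacle.} I expect the hardest step to be this compensated compactness step, namely making the anisotropic compactness precise with $L^2$ bounds and $H^{-1}$ compactness obtained from $L^2\to0$ errors. The $L^2$ bounds come from $f_j\in W^{1,2n}$, and the error terms $\rho_j$ involve both the mixed minors and the derivatives of $\varphi$. The same argument applied to $(b_j,c_j)$ gives the second claim; alternatively, the second claim follows from Step 2.
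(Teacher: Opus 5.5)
Your proposal is correct and is essentially the paper's proof. The paper likewise shows that the mixed $n\times n$ minors tend to $0$ in $L^2$. It then tests the weak closedness of $f_j^*(dy_1\wedge\cdots\wedge dy_n)$ and $f_j^*(dy_{n+1}\wedge\cdots\wedge dy_{2n})$ against $\varphi$ times suitable $(n-1)$-forms, which gives $\partial_{x''}\det A_j,\ \partial_{x''}\det C_j,\ \partial_{x'}\det B_j,\ \partial_{x'}\det D_j\to 0$ in $H^{-1}$. It concludes with Tartar's compensated compactness, using the wave cone $\{(a_1,0)\}\cup\{(0,a_2)\}$ and $Q(a)=a_1a_2$.

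Your Laplace-expansion Steps 1--2 are not needed. Your alternative derivation of the second claim from them is valid once you add that the mixed minors of $\g f$ vanish, being weak limits of sequences tending to $0$ strongly. Note also that separating the two $(n+1)$-form components silently requires $n\ge 2$, exactly as in the paper. For $n=1$ the lemma fails, e.g.\ for the folding maps.
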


\begin{proof} Let $M$ be an $n \times n$ minor which vanishes on $L$. Then 
Lemma~\ref{le:minor_facts}~\eqref{it:lemma_minor_converge} implies that 
$M(Df_j) \to 0$ in $L^2(\Omega)$. In particular the pullbacks of the $n$-forms 
$\alpha_1 = dy_1 \wedge \ldots \wedge dy_n$ and 
$\alpha_2 = dy_{n+1} \wedge \ldots \wedge dy_{2n}$ satisfy
\begin{eqnarray*}
 R^{(1)}_j := f_j^* \alpha_1 - \det A_j \,  \alpha_1 - \det B_j \,  \alpha_2 &\to& 0 \quad \text{in $L^2(\Omega)$,} \\
 R^{(2)}_j := f_j^* \alpha_2 - \det C_j \,  \alpha_1 - \det D_j \,  \alpha_2 &\to& 0 \quad \text{in $L^2(\Omega)$.}
 \end{eqnarray*}
 We argue as in the proof of Theorem~\ref{th:split}. Thus, for any $k\geq n+1$ we define the $(n-1)$-forms $\omega^{(k)}=dx_1\wedge\dots\wedge\widehat{dx_k}\wedge\dots\wedge dx_{2n}$ (i.e.~$dx_{k}$ missing), and $\beta^{(k)}=\varphi\omega^{(k)}$, with $\varphi\in C_c^\infty(\Omega)$. Then 
 $$
 d\beta^{(k)}=\sum_i\frac{\partial\varphi}{\partial x_i}dx_i\wedge\omega^{(k)},
 $$
 where the sum is over $i\in\{1,\dots,n,k\}$. We apply Lemma~\ref{le:pullback_closed} to deduce 
 $$
 \int f_j^*\alpha_1\wedge d\beta^{(k)}=0,
 $$
 hence
 $$
\int R_j^{(1)}\wedge d\beta^{(k)}=(-1)^k\int \det A_j\frac{\partial\varphi}{\partial x_k}\,dx\to 0\textrm{ as }j\to\infty
 $$
 for any $\varphi\in C_c^\infty(\Omega)$ and consequently, by density and the uniform $L^2$ bound on the sequence $\det A_j$, for any 
 $\varphi \in W^{1,2}_0(\Omega)$. 
 In other words
 \begin{equation}  \label{eq:H-1_A}
 \frac{\partial \det A_j}{\partial x_k} \to 0 \quad \text{in $H^{-1}(\Omega)$ \quad  for $k \ge n+1$.}
 \end{equation} 
 Here $H^{-1}(\Omega)$ denotes the  dual space of $H^1_0(\Omega) := W^{1,2}_0(\Omega)$, the closure of $C_c^\infty(\Omega)$
 in $W^{1,2}(\Omega)$. 
 Similarly we get
\begin{eqnarray}
 \frac{\partial \det C_j}{\partial x_k}  &\to& 0 \quad \text{in $H^{-1}(\Omega)$ \quad  for $k \ge n+1$,} \\
  \frac{\partial \det B_j}{\partial x_k}  &\to& 0 \quad \text{in $H^{-1}(\Omega)$ \quad  for $k \le n$,}\\
   \frac{\partial \det D_j}{\partial x_k}  &\to& 0 \quad \text{in $H^{-1}(\Omega)$ \quad  for $k \le n$.}   \label{eq:H-1_D}
 \end{eqnarray}
 Moreover by Lemma~\ref{le:weak_continuity} 
 \begin{eqnarray} 
  \det A_j  &\rightharpoonup& \det A, \quad  \det B_j  \rightharpoonup \det B \quad \text{in $L^2(\Omega)$,}   \label{eq:weak_detA} \\
 \det C_j  &\rightharpoonup& \det C, \quad  \det D_j  \rightharpoonup \det D \quad \text{in $L^2(\Omega)$.} \label{eq:weak_detD} 
 \end{eqnarray}
 
 The assertion now follows from  \eqref{eq:H-1_A}--\eqref{eq:weak_detD}  and the theory of compensated compactness, developed by
  Murat and Tartar,
  see for instance
  \cite{murat77, murat78,  tartar79, tartar15}.
  
 To see this, consider a map $g: \Omega \to \R^2$ and the first-order constant coefficient differential operator
 $\mathcal A: L^2(\Omega; \R^2) \to H^{-1}(\Omega; \R^{2n})$ given by
 \begin{eqnarray*}(\mathcal A g)_k &=& \frac{\partial g_2}{\partial x_k} \quad \text{for $1 \le k \le n$}, \\
  (\mathcal A g)_k &=& \frac{\partial g_1}{\partial x_k} \quad \text{for $n+1 \le k \le 2n$.}
  \end{eqnarray*}
  Associated to $\mathcal A$ is the \emph{wave cone} $\Lambda$ of ``dangerous amplitudes'' defined by
  $$ \Lambda := \{ a \in \R^2 :  \exists \xi \in \R^{2n} \setminus \{ 0\} \, \,  \quad  \mathcal A (a e^{i x \cdot \xi}) = 0 \}.$$
  One easily checks that for the operator $\mathcal A$ defined above
  $$ \Lambda:= \{ (a_1, 0) : a_1 \in \R \} \cup \{ (0, a_2) : a_2 \in \R \}. $$
  Let $Q$ be a quadratic form that vanishes on $\Lambda$. The theory of compensated  compactness 
  implies that  \cite[Thm. 11]{tartar79}
  \begin{align*}
 & \quad  g_j \rightharpoonup   g \quad \text{in $L^2(\Omega)$}  \quad \text{and} \quad  \mathcal A g_j \to 0  \quad  \text{in $H^{-1}(\Omega)$} \\
  \Longrightarrow & \quad 
   Q(g_j) \overset{*}{\rightharpoonup} Q(g) \quad \text{in $\mathcal M(\Omega)$.}
\end{align*} 
Applying this with $Q(a) = a_1 a_2$ and $g_j = (\det A_j, \det D_j)$ or $g_j = (\det B_j, \det C_j)$ we obtain
\eqref{eq:weak_continuity_detA_detD} and \eqref{eq:weak_continuity_detB_detC}
\end{proof}

\bigskip

We now discuss two examples which show that
 the condition  
$$
\lim_{\delta \downarrow 0} \limsup_{j \to \infty}  | \{ x \in \Omega : \det \g f_j(x) < \delta \}| = 0
$$
 in   Theorem~\ref{th:approximate_solutions}  
 cannot be replaced by the condition 
$\det \g f_j > 0$ or the condition $|\det \g f_j| \ge \delta > 0$.

\begin{example}  \label{ex:counterexample_det_bigger_zero} 
To see that the condition $\det \g f_j >  0$ is not sufficient, 
consider the case $n=2$. 
By Theorem~\ref{th:nosplit} there exists a map $F: (0,1)^2 \subset \R^2 \to \R^2$
such that  
$$
\g F = \begin{pmatrix} a & b \\ c & d\end{pmatrix}
$$ 
is split and satisfies $\det \g F = 1$ a.e. 
For any $\eps\geq 0$ define $f^{(\eps)}:(0,1)^4\to\R^4$ by
$$ f^{(\eps)}_1(x) = F_1(x_1, x_3), \quad f^{(\eps)}_3(x) = F_2(x_1, x_3), $$
$$ f^{(\eps)}_2(x) = \eps x_2  - \eps x_4, \quad
f^{(\eps)}_4(x) = \eps x_2 + \eps x_4.$$
Then 
$$ 
\g f^{(\eps)}(x) = \begin{pmatrix}  a & 0 & b & 0 \\
0 & \eps & 0 & -\eps\\
c & 0 & d & 0 \\
0 &\eps & 0 &  \eps 
\end{pmatrix}.
$$
By swapping rows and columns 2 and 3, we see that for a.e.~$x$
$$
\det \g f^{(\eps)}=\det \begin{pmatrix}  \g F&0\\ 0&\eps X \end{pmatrix}=\det \g F\det(\eps X)=2\eps ^2, 
$$
where $X=\begin{pmatrix} 1&-1\\ 1&1\end{pmatrix}$ and we have written the $4\times 4$ matrix in block matrix form. Furthermore, since $\g F$ is split a.e.~as a linear map on $\R\times\R$, also $\g f^{(0)}$ is split a.e.~as a linear map on $\R^2\times\R^2$, and hence 
$$ 
\dist(\g f^{(\eps)}(x), L) \leq \dist(\g f^{(0)}(x), L) + c\eps
$$
Then we have
$$
f^{(\eps)}\to f^{(0)}\quad \textrm{ in }W^{1, \infty},
$$
but the limit map $f^{(0)}$ is not globally split since $F$ is 
not globally split. 
\end{example}

\begin{example}  \label{ex:det_pm_approximate}
Let $\Sigma_\pm = \{ X \in \R^{2n \times 2n} : |\det X| = 1\}$. Let $\Omega = (0,1)^{2n}$. 
We show that there exists a finite set $E \subset L \cap \Sigma_\pm$ and a sequence of uniformly Lipschitz maps such that
\begin{equation}  \label{eq:det_pm_approximate1}  f_j \overset{*}{\rightharpoonup} f \quad \text{in $W^{1, \infty}(\Omega; \R^{2n})$}, 
\quad 
\dist(\g f_j, E) \to 0 \in L^\infty(\Omega)
\end{equation}
and 
\begin{equation} \label{eq:det_pm_approximate2} \g f \equiv \frac14 e_1 \otimes e_1 + \frac14 e_{n+1} \otimes e_1 \notin L.
\end{equation}

The construction is based on so called laminates of finite order which are defined as follows.
Let $\nu$ be a probability measure on $\R^{d \times m}$ which is supported on a finite set, $\nu = \sum_{i=1}^r \lambda_i \delta_{A_i}$
with $A_i \ne A_j$ if $i \ne j$. 
We say that $\nu'$ is obtained from $\nu$ by splitting if there exist  $j \in \{1, \ldots, r\}$, $s \in [0, \lambda_j]$ and 
matrices $B', B''$ such that 
\begin{align*}
\nu' & \, = \nu +  s  \, \big(  \lambda \delta_{B'} + (1- \lambda) \delta_{B''} - \delta_{A_j}  \big),
\end{align*}
where
\begin{align*}\quad A_j & \, = \lambda B' + (1-\lambda) B'', \quad \rank (B''-B') = 1.
\end{align*}
Note that $\nu$ and $\nu'$ have the same center of mass $\overline \nu = \overline{\nu'}  =\sum_{i=1}^r  \lambda_i A_i$.
We say that a probability measure on $\R^{d \times m}$ is a  laminate of finite order  if it can be obtained from a Dirac mass by a finite number of splittings.

We will show that there exists a laminate $\nu$ of finite order with $\bar \nu = 
 \bar A := \frac14 e_1 \otimes e_1 + \frac14 e_{n+1} \otimes e_1$
 which is given by 
$\nu = \sum_{i=1}^r \lambda_i \delta_{A_i}$ with $A_i \ne A_j$ for $i \ne j$ and $A_i \in L \cap \Sigma_\pm$. 
Set $E = \{ A_1, \ldots A_r\}$.

Then by \cite[Lemma 3.2]{muller_sverak03}  there exist  piecewise affine Lipschitz  maps
$f_j : \Omega \to \R^{2n}$ such that 
\begin{itemize}
    \item $f_j(x)=\bar{A}x$ on $\partial\Omega$,
    \item $|f_j(x)-\bar{A}x|<2^{-j}$ in $\Omega$,
    \item $\dist(\g f_j, E) < 2^{-j}$ a.e.~in $\Omega$.
\end{itemize}
Then, since the sequence is uniformly Lipschitz and bounded in $W^{1,\infty}(\Omega)$, by the Banach-Alaoglu theorem we may assume in addition and without loss of generality (by passing to a subsequence if necessary) that $f_j \overset{*}{\rightharpoonup} f$ in $W^{1, \infty}(\Omega)$, and moreover $f(x)\equiv \bar{A}x$, as claimed in  \eqref{eq:det_pm_approximate1}- \eqref{eq:det_pm_approximate1}

To construct $\nu$, we first construct a laminate $\nu_1$ of finite order with $\bar \nu_1 = \frac12 e_1 \otimes e_1$
which is supported on  the set $E_1 = \{  \sum_{i=1}^{2n} \sigma_i e_i \otimes e_i : \sigma_i \in \{-1,1\} \}$
of diagonal matrices with entries $\pm 1$. 
To do so,  we write $(a_1, \ldots, a_{2n}) := \sum_{i=1}^{2n} a_i e_i \otimes e_i$ and use the splittings
\begin{eqnarray*}
\delta_{(\tfrac12, 0, \ldots, 0)} & \lra & \frac14  \delta_{(-1, 0, \ldots, 0)} + \frac34 \delta_{(1,0, \ldots, 0)}, \\
\delta_{(\pm 1, 0, \ldots, 0)} & \lra & \frac12  \delta_{(\pm 1, -1, 0, \ldots, 0)} + \frac12 \delta_{(\pm 1, 1, 0 , \ldots, 0)}, \\
&\ldots,  & \\
\delta_{(\pm1 ,  \ldots, \pm 1,  0)} & \lra & \frac12  \delta_{(\pm 1, \ldots, \pm 1,  -1)} + \frac12 \delta_{(\pm1, \ldots, \pm 1, 1)}. \\
\end{eqnarray*}
Similarly we obtain a laminate $\nu_2$ of finite order with $\bar \nu_2 = \frac12 e_{n+1} \otimes e_1$. Specifically, we can consider the linear  map 
$P : \R^{2n} \to \R^{2n}$ given by $P\binom{x'}{x''} = \binom{x''}{x'}$ for $x', x'' \in \R^n$. Let $\ell_P$ denote  the action of $P$  on $\R^{2n \times 2n}$ by left multiplication
of matrices. Then $\ell_P$ preserves the set $L \cap \Sigma_\pm$.   Moreover, pushforward by $\ell_P$ maps laminates of finite order to laminates of finite 
order,  since $\ell_P$ preserves rank-one lines. Hence the pushward measure $\nu_2 = \ell_P^* \nu_1$
 is a laminate of finite order which is supported on $L \cap \Sigma_\pm$
and satisfies $\bar \nu_2 =\frac12 e_{n+1} \otimes e_1$. 
Finally,  using the splitting
$$ \delta_{\tfrac14 e_1 \otimes e_1 + \tfrac14 e_{n+1} \otimes e_1} \, \lra \, \frac12 \delta_{\tfrac12 e_1 \otimes e_1} + \frac12 \delta_{\tfrac12 e_{n+1} \otimes e_1}$$
shows that $\nu = \frac12 \nu_1 + \frac12 \nu_2$ is a laminate of finite order  which is supported on $L \cap \Sigma_\pm$ and satisifies
$\bar \nu = \frac14 e_1 \otimes e_1 + \frac14 e_{n+1} \otimes e_1$.
\end{example}

\section{No global splitting for $n = 1$ -- overview of the argument}  \label{se:nosplit}
In this section we give a short  overview  of the  argument to prove  Theorem~\ref{th:nosplit}.
We first note that the Theorem is an immediate consequence of Proposition~\ref{prop:nosplit} below.
In the following two sections we develop the needed auxiliary results in detail to prove Proposition~\ref{prop:nosplit}.
In fact, we give a more precise statement of the result as Proposition~\ref{prop:largeT5ex}. This proposition is then proved in 
Section~\ref{se:T4}.


Recall that the set of split matrices is given by $L=L_1 \cup L_2$ with
\begin{equation}
L_1 := \left\{  \begin{pmatrix} a & 0 \\ 0 & d 
\end{pmatrix} : a, d \in \R \right\}, \quad 
L_2 := \left\{  \begin{pmatrix} 0 & b \\ c & 0 
\end{pmatrix} : b,c \in \R \right\}
\end{equation}
We set
\begin{equation}
\Sigma := \{ Y\in\R^{2\times 2} : \det Y = 1\}.
\end{equation}
Let $\Omega_1, \Omega_2$ be bounded and open intervals in $\R$ and set $\Omega=\Omega_1\times\Omega_2$.
Our aim is to prove the following statement:  

\begin{proposition}\label{prop:nosplit}
There exists a compact set $K \subset L\cap\Sigma$, a 
matrix $A \in \Sigma \setminus L$ and a Lipschitz map $f:\overline{\Omega}\to\R^2$ such that
\begin{eqnarray}   \label{eq:Df_in_K0}  
\g f(x) &\in & K \quad \text{for a.e. $x \in \Omega$ and}\\
 \label{eq:affine_boundary0}
f(x) &=& Ax \quad \text{for  all $x \in \partial \Omega$.}
\end{eqnarray}	
Moreover, $K$ can be chosen to consist of 5 elements.
\end{proposition}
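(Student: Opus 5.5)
The plan is to use convex integration, as announced in the organisation of the paper. We produce five matrices $X_1,\dots,X_5\in L\cap\Sigma$ forming a \emph{large $T_5$ configuration} in the sense of F\"orster and the third author \cite{szekelyhidi_forster18}. Their result says that for a large $T_5$ configuration $K=\{X_1,\dots,X_5\}$ there exists an open set of boundary matrices $A$ in the interior of the rank-one convex hull $K^{rc}$ (more precisely, inside the "inner" region determined by the configuration). For each such $A$, and any bounded open $\Omega$, there is a Lipschitz $f$ with $\g f\in K$ a.e.\ and $f(x)=Ax$ on $\partial\Omega$. It then suffices to find such a configuration for which some admissible $A$ lies in $\Sigma\setminus L$.

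\textbf{Step 1 (reduction to a low-dimensional problem).} Elements of $L\cap\Sigma$ are $\mathrm{diag}(a,a^{-1})$ and $\begin{pmatrix}0&b\\-b^{-1}&0\end{pmatrix}$. Rank-one connections within $L\cap\Sigma$ are computed by $\det(X-Y)=0$. A diagonal and an antidiagonal element are rank-one connected iff $1-bc-\dots$; concretely $\det\begin{pmatrix}a&-b\\ b^{-1}&a^{-1}\end{pmatrix}=2\neq0$. So there are no rank-one connections between $L_1\cap\Sigma$ and $L_2\cap\Sigma$ with $\det=1$ on both sides. This is why a $T_N$ configuration, rather than a laminate, is needed. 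Since $\det$ is a null Lagrangian and is affine along rank-one lines, the whole construction lives in $\Sigma$, a 3-dimensional quadric. Every matrix occurring in the $T_5$ (the points $P_i$ and rank-one directions) automatically has $\det P_i=1$ if we choose $P_{i+1}-P_i$ rank one with endpoints in $\Sigma$.

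\textbf{Step 2 (explicit configuration).} I would choose $X_i$ alternating between $L_1$ and $L_2$ with at least one family repeated (five is odd). Examples are $X_1,X_3\in L_1$ and $X_2,X_4\in L_2$, with $X_5$ in one of them. I would then solve for points $P_1,\dots,P_5$ with $X_i-P_i$ and $P_{i+1}-P_i$ parallel and rank one, $P_{i+1}=P_i+\kappa_i(X_i-P_i)$, $\kappa_i>1$. The rank-one conditions $\det(X_i-P_i)=0$ and $\det(X_{i+1}-P_i)=0$ are polynomial equations. I would parametrise by a symmetry (for instance invariance under $X\mapsto JXJ^{-1}$ or a rotation-type map permuting $L_1,L_2$) to reduce the unknowns, find a numerical solution, and then verify it exactly with rational or algebraic entries.

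\textbf{Step 3 (largeness and choice of $A$).} Verify the "large" condition of \cite{szekelyhidi_forster18}: the rank-one directions $X_i-P_i$, or the relevant auxiliary configurations, must persist under perturbation so that the open set of admissible boundary data is nonempty. Then pick $A$ in the inner polygon, e.g.\ on a segment $[P_i,X_i]$ slightly perturbed into the open region, with $\det A=1$ and $A\notin L$. Since the admissible set is open in $\Sigma$ and $L\cap\Sigma$ is a lower-dimensional subset, such $A$ exists. Applying the theorem yields $f$ with $\g f\in K$ a.e.\ and $f=Ax$ on $\partial\Omega$; $f$ is Lipschitz since $K$ is bounded. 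The main obstacle is Step 2/3: exhibiting an explicit five-point configuration in $L\cap\Sigma$ and rigorously checking both the $T_5$ equations (with all $\kappa_i>1$) and the largeness condition. I would first search numerically, then certify with exact arithmetic.
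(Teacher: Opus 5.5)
\textbf{Genuine gap: the large $T_5$ condition is misdescribed, and no configuration is produced or checked.}

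Your skeleton is the paper's. You find a five-point large $T_5$ set in $L\cap\Sigma$ and invoke Theorem~\ref{th:large_T5} to get an in-approximation relative to $\Sigma$. You then apply the convex integration with constraints of \cite{muller_sverak_constraint} (Theorem~\ref{th:constraint}). Finally you pick $A\in U_1\setminus L$, which exists because $U_1$ is a nonempty relatively open subset of the three-dimensional $\Sigma$ while $L\cap\Sigma$ is a union of two curves.

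The step that carries all the content, however, is described incorrectly. The large $T_5$ property is not a perturbation-persistence property of one configuration. Definition~\ref{T5d:largeT5} requires two things:
\begin{itemize}
\item the same five matrices form a $T_5$ configuration for at least three different orderings $\sigma_1,\sigma_2,\sigma_3$;
\item for every $i$, the three rank-one matrices $P_i^{\sigma_j}-X_i$, $j=1,2,3$, are linearly independent.
\end{itemize}
Since $\det$ is affine on rank-one lines and $\det X_i=\det P_i^{\sigma_j}=1$, these directions are tangent to $\Sigma$ at $X_i$. Independence therefore means they span the three-dimensional $T_{X_i}\Sigma$. This is what allows relatively open subsets of $\Sigma$, shrinking to the finite set $K$, to be nested in each other's rank-one convex hulls.

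Your Step~2 searches for a single cyclic order, which gives only one rank-one segment at each vertex. Combining one configuration with stability under perturbation (the M\"uller--\v{S}ver\'ak argument the paper mentions after Lemma~\ref{lem:T4}) yields at best the thickened set $\{X\in L\cap\Sigma:\dist(X,K)\le\eps\}$. That set is infinite, so the ``$K$ has five elements'' part of the statement would be lost. As written, your Step~3 check is not the hypothesis of Theorem~\ref{th:large_T5}.

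Second, no configuration is exhibited or verified; ``search numerically, then certify'' is a programme, not a proof. The paper's construction is as follows.
\begin{itemize}
\item It takes $X_1=\mathrm{diag}(c,1/c)$, $X_2=\mathrm{diag}(1/c,c)$, the two antidiagonal matrices $X_3,X_4$, and $X_5=I$, with $c\ge3$.
\item It uses the orderings $[1\,2\,3\,5\,4]$, $[1\,2\,4\,5\,3]$ and $[1\,2\,5\,3\,4]$. The last one places both $L_2$ matrices next to each other, so do not restrict the search to alternating orders.
\item Instead of solving polynomial equations for $P_i,\kappa_i$, it uses Sz\'ekelyhidi's linear criterion (Proposition~\ref{pr:criterion_TN}): a $T_5$ configuration corresponds to $\mu>1$ and $\lambda\in(0,\infty)^5$ with $A^{\sigma,\mu}\lambda=0$.
\item Existence of $\mu>1$ reduces to the sign condition $\beta^\sigma>0$. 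This holds because $\det A^{\sigma,1}=2a^2(ab^2+4a-16b)>0$ and $\alpha^{\sigma_j}<0$.
\item Positivity of the explicit kernel vectors $\lambda^{(j)}$ uses $ab>8$.
\item The independence condition becomes $\rank B^{(k)}=3$ for each $k$, checked through explicit nonvanishing $3\times3$ minors.
\end{itemize}
Until you supply a concrete five-point set with three such orderings and this independence check, the proposal does not establish the statement.
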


Indeed, Theorem \ref{th:nosplit} follows immediately from this proposition.

\begin{proof}[Proof of Theorem~\ref{th:nosplit}]
Let $f$ be as in Proposition~\ref{prop:nosplit}.
Then $\g f(x)$ is split with $\det \g f(x) = 1$ for almost every $x\in\Omega$, but $f$ is not globally split because $A$ is not split. 

Note that  the restriction of $f$ to $\partial \Omega$ is affine.  To see that $f$ is bi-Lipschitz on $\overline \Omega$ one
can either use Theorem 2 in \cite{ball_invertibility81} or use the theory of quasiregular mappings \cite{ahlfors66} as follows. First of all, setting $\mathcal{K}= \| \g f\|_{L^\infty}^2$ we see that $|\g f|^2 \le K \det \g f$
a.e. and thus $f$ is $\mathcal{K}$-quasiregular (or, equivalently, a map of bounded distorsion). Being affine on the boundary, it then follows from \cite[Theorem 5]{LeonettiNesi} or \cite[Theorem 6.1]{BojarskiDOnofrioetal}
that $f$ is a homeomorphism, hence $\mathcal{K}$-quasiconformal. 

Then $f^{-1}$ is also $\mathcal{K}$-quasiconformal
(see, e.g., \cite[Chapter 2]{ahlfors66}) 
and in particular in the Sobolev space $W^{1,1}(A \Omega)$ (which is equivalent to the space $\mathrm{ACL}(A\Omega))$.
In particular $\g f^{-1} = (\g f)^{-1} \circ f^{-1}$ a.e. and thus $\g f^{-1}$ is in $L^\infty(A\Omega)$. Since $A \Omega$ is Lipschitz domain
(in fact parallelogram),  $f^{-1}$ is Lipschitz on $A\Omega$ and hence has a Lipschitz extension to the closure.
%
%
\end{proof}

\bigskip

The proof of Proposition \ref{prop:nosplit} will be given in the next two sections. In a nutshell, to construct a map which satisfies  \eqref{eq:Df_in_K0}  and  \eqref{eq:affine_boundary0}, we use
the theory of convex integration for  Lipschitz maps. After briefly reviewing the theory in Section \ref{se:convex_integration}, we restate the proposition more precisely as Proposition \ref{prop:largeT5ex}. The proof will then be given in Section \ref{se:T4}.

The key challenge in our setting is the lack of rank-one connections in the set $L\cap \Sigma$; that is, for any $A,B\in L\cap \Sigma$ with $A\neq B$ we have $\textrm{rank}(A-B)=2$. The significance of this property lies in the following standard construction (c.f. with the `folding map' example described in the introduction):

\begin{example}\label{ex:rankoneAB}
Let $A,B\in\R^{d\times m}$ with $\textrm{rank}(A-B)=1$. Such pairs of matrices are referred to as \emph{rank-one connections}. Then we can write $A-B=a\otimes\xi$ for some $a\in \R^d$, $\xi\in \R^m$; further, let $C=\frac{1}{2}(A+B)$. Given any Lipschitz function $h:\R\to\R$ with $h'(t)\in\{+1,-1\}$ a.e., set $f(x):=Cx+\frac12ah(x\cdot\xi)$. Then $f:\R^m\to\R^d$ is Lipschitz with $\g f(x)\in\{A,B\}$ a.e.
\end{example}
In other words the presence of such rank-one connections $A,B\in K$ allows Lipschitz solutions of the corresponding differential inclusion \eqref{eq:Df_in_K0} to `combine' the two gradients $A,B$. Despite the very simple nature of this construction, the question whether or not rank-one connections exist in any given set $K\subset\R^{d\times m}$ has played a pivotal role in the theory of differential inclusions of the type \eqref{eq:Df_in_K0}, with far-reaching consequences. For instance, if $K$ is a $C^1$ submanifold in $\R^{d\times m}$, non-existence of rank-one connections in the tangent spaces $T_AK$ can be identified with a form of (linearized) ellipticity in the sense of Legendre-Hadamard (see \cite{scheffer74,Ball:1980fy,Sverak:1993va,muller_sverak03,muller99,szekelyhidi04,Lorent:2019kv}). In particular in our $2\times 2$ setting, the lack of rank-one connections in both $K$ and its tangent spaces leads to higher regularity, as shown by \v Sver\'ak \cite{Sverak:1993va}, provided $K$ is connected: in that case every Lip\-schitz solution $f$ of \eqref{eq:Df_in_K0} in fact belongs to $C^{1,\alpha}$ and moreover if $K$ is smooth, so is $f$. At this point it is worth noting that our set $E:=L\cap\Sigma\subset\R^{2\times 2}$ satisfies the following properties, both of which are easy to verify:
\begin{itemize}
\item The set $E$ is the disjoint union of two smooth `elliptic' sets $E=E_1\cup E_2$ with $E_i=L_i\cap\Sigma$. That is, for each $i=1,2$ the set $E_i$ is a smooth curve with tangent directions given by rank-two matrices;
\item The set $E$ contains no rank-one connections. That is, for any $A,B\in E$ with $A\neq B$, $\textrm{rank}(A-B)=2$.
\end{itemize}
Although ellipticity leads to higher regularity for $C^1$ solutions, it does not exclude the possibility of large jumps in the gradient $\g f$ for Lipschitz solutions, \emph{even in the absence of rank-one connections}. To explain this in some detail, let us first consider the construction of approximately split maps (c.f.~Theorem \ref{th:approximate_solutions}):
\begin{proposition}\label{prop:noapproxsplit}
There exists a compact set $K \subset L\cap\Sigma$, a 
matrix $A \in \Sigma \setminus L$ and a sequence of uniformly Lipschitz maps $f_j:\overline{\Omega}\to\R^2$ such that
\begin{eqnarray}   \label{eq:Df_to_K0}  
\dist(\g f_j,K) &\to & 0 \quad \text{in $L^\infty(\Omega)$}\\
 \label{eq:affine_boundary01}
f_j(x) &=& Ax \quad \text{for  all $x \in \partial \Omega$.}
\end{eqnarray}	
\end{proposition}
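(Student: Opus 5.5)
The plan is to use the standard $T_N$-configuration (``approximate laminate'') machinery for Lipschitz differential inclusions. Since $L\cap\Sigma$ contains no rank-one connections, I will look for finitely many matrices $X_1,\dots,X_N\in L\cap\Sigma$ forming a $T_N$ configuration in $\R^{2\times 2}$. This means there are matrices $P_1,\dots,P_N$, rank-one directions $C_i$ and parameters $\kappa_i>1$ with
\[
P_{i+1}-P_i=C_i,\qquad X_i=P_i+\kappa_i C_i ,
\]
cyclically. Each $P_{i+1}$ then lies on the rank-one segment $[P_i,X_i]$. The natural choice is $N=5$, consistent with the statement that $K$ can have five elements; $N=4$ might already suffice, but a configuration with prescribed determinant constraints is easier to find with more points. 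Take $K=\{X_1,\dots,X_5\}$ and let $A$ be any matrix in the interior of the rank-one convex hull generated by the configuration, for instance a point on one of the segments $[P_i,X_i]$ strictly between $P_i$ and $P_{i+1}$.

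The construction is then a standard iteration. Start from $f_0(x)=Ax$. Since $A$ lies on a rank-one segment between $P_i$ and $X_i$, a simple laminate construction (Example~\ref{ex:rankoneAB} together with a cutoff to restore the boundary values, as in \cite[Lemma 3.2]{muller_sverak03}) replaces $Ax$ by a piecewise affine map with the same boundary values. Its gradient is close to $X_i$ on most of the domain and close to $P_{i+1}$ on the rest. Repeating the step on the $P_{i+1}$-pieces moves the gradient around the cycle. At every stage a fixed fraction of the remaining ``bad'' set is converted to gradients near $K$, and the errors are confined to small sets and small sup-norm perturbations. Passing to the limit gives the sequence $f_j$ of Proposition~\ref{prop:noapproxsplit}, uniformly Lipschitz and with $\dist(\g f_j,K)\to 0$ in $L^\infty$.

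For Proposition~\ref{prop:nosplit} itself one needs exact inclusion, so that the gradient lies in $K$ rather than only close to it. Here I would invoke the result of \cite{szekelyhidi_forster18}, which gives exact Lipschitz solutions with affine boundary data $A$ provided the five matrices form a \emph{large} $T_5$ configuration. Roughly, largeness means that the configuration remains a $T_5$ configuration after small perturbations of each $X_i$ inside suitable rank-one directions; this is what makes the error-correcting iteration converge to an exact solution. The boundary condition $f=Ax$ on $\partial\Omega$ is inherited from the construction. Choosing $A\notin L$ only requires that the interior of the hull, which is open in the rank-one convex sense, is not contained in the union of two lines' worth of split matrices.

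The main obstacle is the algebra: exhibiting explicit $X_i=\mathrm{diag}(a_i,1/a_i)$ or $\begin{pmatrix}0&b_i\\-1/b_i&0\end{pmatrix}$ in $L\cap\Sigma$ that form a $T_5$ configuration, and checking largeness. My approach is the following.
\begin{enumerate}
\item Parametrize the $P_i$ and use the fact that in $2\times2$ the rank-one condition is $\det(P_{i+1}-P_i)=0$.
\item Impose the constraints $\det X_i=1$.
\item Look for a configuration with a symmetry, for example invariance under $X\mapsto X^{T}$ or under conjugation by a rotation, to reduce the number of unknowns.
\item Solve numerically, or by hand, for rational parameters.
\item Verify largeness through the open-condition criterion of \cite{szekelyhidi_forster18}, namely a nondegeneracy of the linearized rank-one constraints at the configuration.
\end{enumerate}
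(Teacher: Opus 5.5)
Your overall strategy matches the paper's: find a $T_N$ configuration in $L\cap\Sigma$, take $A$ on a rank-one segment $[P_i,X_i]$, and produce the maps by iterated lamination. But the proposal stops exactly where the proposition has content. Because $L\cap\Sigma$ has no rank-one connections, the statement essentially amounts to exhibiting a $T_N$ configuration inside $L\cap\Sigma$. You only outline a search (parametrize, impose symmetry, solve numerically). You neither produce a configuration nor show that the square system $\det(X_i-P_i)=0$ has a solution with all $\kappa_i>1$.

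The paper supplies this in Lemma~\ref{lem:T4}, and four points suffice: $X_1=\mathrm{diag}(c,1/c)$, $X_2=\mathrm{diag}(1/c,c)$, $X_3=\begin{pmatrix}0&-c\\1/c&0\end{pmatrix}$, $X_4=\begin{pmatrix}0&-1/c\\c&0\end{pmatrix}$. The tool you are missing is Sz\'ekelyhidi's criterion (Proposition~\ref{pr:criterion_TN}). It converts the $T_N$ property into the existence of $\mu>1$ and $\lambda\in(0,\infty)^N$ with $A^\mu\lambda=0$. Here $\det(X_1-X_2)=\det(X_3-X_4)=-a$ with $a=(c-1/c)^2$, and all other differences have determinant $2$. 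The condition collapses to $(1+\mu)^2=\frac{a^2}{4}\mu$, which has a root $\mu>1$ with positive kernel vector if and only if $a>4$, i.e.\ $c>1+\sqrt2$.

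Your discussion of large $T_5$ sets concerns Proposition~\ref{prop:nosplit}, not this statement. Also, largeness there is not a perturbation-stability property. It means three orderings form $T_5$ configurations whose rank-one directions $P_i^{\sigma_j}-X_i$ are linearly independent at each $X_i$.

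Two further steps need repair.

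\emph{Convergence in $L^\infty$.} Finite stages of your iteration give $\dist(\nabla f_j,K)\to0$ only in measure. On the residual set the gradient is near an inner point $P_k$, and $\dist(P_k,K)>0$ since $P_k\notin L$. For each tolerance $\eta>0$ you have two options. You can run the iteration to the limit with summable gradient errors: the residual sets shrink geometrically and gradients stay bounded, so the limit map satisfies $\dist(\nabla f,K)\le\eta$ a.e. Alternatively, use Lemma~\ref{l:TNrc} to get $A\in K^{rc}\subset U_\eta^{rc}$ for the open set $U_\eta=\{\dist(\cdot,K)<\eta\}$, and apply the convex integration theorem for open sets \cite[Thm.~3.1]{muller_sverak03} quoted in Section~\ref{se:convex_integration}.

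\emph{Choice of $A$.} Taking $A$ ``in the interior of the rank-one convex hull'' is vacuous, since $K^{rc}\subset\Sigma$. The right argument for $A\in\Sigma\setminus L$ goes as follows. $\det$ is affine on rank-one segments, so $\det P_{i+1}-1=(1-\kappa_i^{-1})(\det P_i-1)$ around the cycle. This forces $\det P_i=1$ and hence $\det A=1$. Then $A\in L$ would give a rank-one connection between $A$ and $X_i$ inside $L\cap\Sigma$, which is impossible.

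Finally, a point strictly between $P_i$ and $P_{i+1}$ splits into gradients near $P_i$ and $X_i$, not near $P_{i+1}$. So the cycle is traversed backwards.
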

The proof of Proposition \ref{prop:noapproxsplit} is based on the observation that one can find special 4-element sets $K=\{X_1,\dots,X_4\}\subset L\cap\Sigma$ forming a so-called $T_4$-configuration - see Definition \ref{de:T4} below in Section \ref{se:T4}. Such configurations, discovered independently by a number of authors in various contexts \cite{scheffer74,aumann_hart86,nesi_milton91,casadio_tarabusi93,tartar93},  have played a central role in understanding the proper generalisation of Example \ref{ex:rankoneAB}, and in particular in the work of Scheffer \cite{scheffer74} and subsequently also in \cite{muller_sverak03,szekelyhidi04} to produce counterexamples to regularity for elliptic systems. In our situation the precise result, whose proof will be given in Section \ref{sec:T4ex}, is the following:
\begin{lemma}\label{lem:T4}
Let $c>1$ and 
\begin{equation}
\begin{split}
	X_1&=\begin{pmatrix}c&0\\0&1/c\end{pmatrix}, \quad  X_2=\begin{pmatrix}1/c&0\\0&c\end{pmatrix},\\ X_3&=\begin{pmatrix}0&-c\\1/c&0\end{pmatrix}, 
	\quad X_4=\begin{pmatrix}0&-1/c\\c&0\end{pmatrix},
	\end{split}
\end{equation}
so that $X_1,X_2\in L_1\cap \Sigma$ and $X_3,   X_4\in L_2\cap\Sigma$. Then $(X_1,X_2,X_3,X_4)$ is a $T_4$ configuration if and only if
 $c>1+\sqrt{2}$.
 \end{lemma}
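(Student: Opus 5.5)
The plan is to verify the definition of a $T_4$ configuration (Definition~\ref{de:T4}) directly. I use the standard form of that definition, and the first step is to match it against Definition~\ref{de:T4} as stated in Section~\ref{se:T4}. In that form, $(X_1,\dots,X_4)$ is a $T_4$ configuration if $\det(X_i-X_j)\neq 0$ for $i\neq j$, and there exist matrices $P_1,\dots,P_4$ (indices mod $4$) and numbers $\kappa_i>1$ such that
\begin{equation*}
X_i=P_i+\kappa_i(P_{i+1}-P_i),\qquad \rank(P_{i+1}-P_i)=1 .
\end{equation*}
Equivalently, each $P_{i+1}$ lies strictly inside the rank-one segment $[P_i,X_i]$, so that
\begin{equation*}
\det(P_i-X_i)=0 \quad\text{and}\quad \det(P_{i+1}-X_i)=0 .
\end{equation*}

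The first step is to compute the pairwise determinants. One has
\begin{equation*}
\det(X_1-X_2)=\det(X_3-X_4)=-(c-1/c)^2<0,
\end{equation*}
while $\det(X_i-X_j)=2$ for every mixed pair with $i\in\{1,2\}$ and $j\in\{3,4\}$. This gives the no-rank-one-connection condition. It also fixes the cyclic order: consecutive elements must alternate between $L_1$ and $L_2$, so the relevant order is $(X_1,X_3,X_2,X_4)$ or its relabelings. I would check that this is consistent with the order used in the statement, or reduce to it using the symmetries described next.

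The second step is to exploit symmetry. The linear maps $Y\mapsto JYJ$ with $J=\begin{pmatrix}0&1\\1&0\end{pmatrix}$, and $Y\mapsto Y^{-T}$ (or a suitable combination with sign changes), preserve $\det$, $L_1$, $L_2$ and rank-one directions. They act on $\{X_1,\dots,X_4\}$ as a dihedral group, exchanging $X_1\leftrightarrow X_2$ and $X_3\leftrightarrow X_4$. I would make a symmetric ansatz in which the $P_i$ form one orbit, say $P_1=\begin{pmatrix}p&q\\r&s\end{pmatrix}$ and the others its images. Since $\det(Y-X)=\det Y-\mathrm{cof}(X):Y+\det X$, each difference of the conditions $\det(P_i-X_i)=0$ and $\det(P_i-X_{i-1})=0$ is affine in $P_i$. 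The system therefore reduces to a small number of polynomial equations in one or two parameters, which I solve explicitly.

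Next I compute the $\kappa_i$ from the collinearity relation $X_i-P_i=\kappa_i(P_{i+1}-P_i)$. I expect $\kappa_i>1$ to be equivalent to a quadratic inequality such as $c^2-2c-1>0$, that is, $c>1+\sqrt2$. This gives sufficiency.

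The main obstacle is necessity: I must rule out non-symmetric $T_4$ configurations when $c\le 1+\sqrt2$. I would first try the algebraic characterization of $T_4$ configurations in $\R^{2\times2}$ due to Sz\'ekelyhidi (rank-one convex hulls without rank-one connections). That criterion is phrased through sign conditions on the matrix $A_{ij}=\det(X_i-X_j)$ together with the existence of positive weights. Given the computed values $-(c-1/c)^2$ and $2$, it should reduce to the same inequality. Failing that, I would show directly that the $P_i$ are uniquely determined by the cyclic conditions, so that uniqueness forces the symmetric solution. The approach is to parametrize $P_1$ on the conic $\{\det(Y-X_4)=\det(Y-X_1)=0\}$ and propagate it around the cycle to get a fixed-point equation. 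The inequality $\kappa_i>1$ then fails for $c\le 1+\sqrt2$.
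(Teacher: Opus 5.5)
There is a genuine gap, and it starts with the ordering. The lemma concerns the ordered tuple $(X_1,X_2,X_3,X_4)$. In it, the two $L_1$-matrices are cyclically adjacent, and so are the two $L_2$-matrices.

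\emph{The ordering.} Your claim that consecutive entries of a $T_4$ configuration must alternate between $L_1$ and $L_2$ is false. The lemma asserts the opposite for $c>1+\sqrt2$, and the paper notes that all six orderings give $T_4$ configurations.

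You also cannot reduce the alternating order $(X_1,X_3,X_2,X_4)$ to the stated one by symmetries. Every linear isomorphism of $\R^{2\times 2}$ preserving rank-one matrices multiplies $\det$ by a fixed nonzero factor. Such a map must therefore preserve the partition $\{\{X_1,X_2\},\{X_3,X_4\}\}$ into the pairs with $\det(X_i-X_j)=-(c-1/c)^2<0$. In the stated cyclic order two of the four consecutive pairs are of this type; in the alternating order none is. So no such symmetry carries one problem to the other.

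For the same reason, your ansatz that $P_1,\dots,P_4$ form a single orbit is unavailable for the stated order. A symmetry acting as $X_1\to X_2\to X_3\to X_4\to X_1$ would send $\{X_1,X_2\}$ to $\{X_2,X_3\}$, whose determinant is $2$. Among order-preserving symmetries only the rotation by two steps survives, e.g.\ $Y\mapsto\cof(JYD)$ with $J=\begin{pmatrix}0&1\\1&0\end{pmatrix}$ and $D=\mathrm{diag}(1,-1)$. As designed, your sufficiency construction proves a statement about a different tuple. (That tuple also turns out to be $T_4$ exactly when $c>1+\sqrt2$, but this does not give the lemma.)

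\emph{Necessity.} You leave this at ``should reduce to the same inequality'', with a fallback fixed-point argument that is not carried out. Sz\'ekelyhidi's criterion (Proposition~\ref{pr:criterion_TN}) is not a sign condition but an equivalence: the tuple is $T_4$ if and only if $A^\mu\lambda=0$ for some $\mu>1$ and $\lambda\in(0,\infty)^4$. Apply it to the stated order, with $a=(c-1/c)^2$ and $A^\mu$ as in \eqref{eq:Amu_X}:
\begin{itemize}
\item rows $1,2$ force $\lambda_2=\mu\lambda_1$;
\item rows $3,4$ force $\lambda_4=\mu\lambda_3$;
\item the remaining $2\times 2$ system is solvable if and only if $a^2\mu=4(1+\mu)^2$.
\end{itemize}
This has a root $\mu>1$ if and only if $a>4$, i.e.\ $c>1+\sqrt2$. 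Then $\lambda_3/\lambda_1=a\mu/(2(1+\mu))>0$ automatically. This single computation is the paper's proof and gives both directions at once. No explicit construction of the inner points or of the $\kappa_i$ is needed.

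If you do construct the $P_i$ directly, note that $\det(P_i-X_i)=\det(P_{i+1}-X_i)=0$ does not imply that $P_i,P_{i+1},X_i$ are collinear. For example, $0$, $e_1\otimes e_1$ and $e_1\otimes e_2$ are pairwise rank-one connected but not collinear. Collinearity and $\kappa_i>1$ must be imposed separately.
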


Consequently, such a $T_4$-configuration $K=\{X_1,\dots,X_4\}$ satisfies
 the conclusions of Proposition \ref{prop:noapproxsplit} (see Section \ref{se:T4}). 
 Although such a $4$-element set cannot work in Proposition  \ref{prop:nosplit} (see \cite{Chlebik:2002va}), 
 it is possible to adapt the stability argument of \cite{muller_sverak03} to show that for sufficiently small $\varepsilon>0$ the set 
$$
K':=\bigl\{X\in L\cap\Sigma:\,\dist(X,K)\leq \varepsilon\bigr\}
$$   
satisfies the conclusions of Proposition  \ref{prop:nosplit}.
An alternative approach, based on \cite{szekelyhidi_forster18}, is to find suitable $T_5$ configurations $(X_1,\dots,X_5)$ in $L\cap\Sigma$. 
Since the latter has, to the best of our knowledge, not been applied 
for concrete differential inclusions so far and hence may be of independent interest, 
we opt in this paper to present the details of this alternative approach in the next sections. The precise result is stated in Proposition \ref{prop:largeT5ex}.

\section{Convex integration}  \label{se:convex_integration}

 In this section we review some results from  the theory of convex integration  which are required for the proof of Proposition~\ref{prop:nosplit}.
Let $E$ be a subset of the $ d\times m$ matrices  $\R^{d \times m}$ and let $A \in \R^{d \times m}$.
Let $\Omega \subset \R^m$ be bounded and open. 
Convex integration provides sufficient conditions for the existence of a Lipschitz map $f: \overline \Omega \to \R^d$
such that
\begin{eqnarray}  \label{eq:Df_in_E} \g f(x) &\in& E \quad \text{for a.e. $x \in \Omega$ and}\\
 \label{eq:affine_boundary}
f(x) &=& Ax \quad \text{for $x \in \partial \Omega$.}
\end{eqnarray}

In fact,  convex integration does much more. It shows that the affine function $x \mapsto Ax$, viewed as  function 
on $ \Omega$, admits a fine $C^0$-approximation by functions $f$ with $\g f \in E$ a.e., i.e. for every continuous function 
$\eps: \Omega \to (0, \infty)$ there exists a map $f$  with $\g f \in E$ a.e.\ such that $|f(x) - Ax| < \eps(x)$. Taking  $\eps(x) = \eps_0 \dist(x, \partial
\Omega)$ we recover  \eqref{eq:affine_boundary}.
More generally,  any $C^1$ function $g: \overline{\Omega} \to \R^d$ with $\g g$ in a suitable set $E'$ admit a fine $C^0$ approximation by functions $f$ with $\g f \in E$ a.e.
For our purposes functions which satisfy  \eqref{eq:Df_in_E}  and \eqref{eq:affine_boundary} are sufficient, so we focus on this setting. 

Roughly speaking, convex integration asserts that the problem \eqref{eq:Df_in_E},  \eqref{eq:affine_boundary}
can be solved if $A$ lies in a suitable convex hull of  $E$. 
The key idea of convex integration is to 'deform' affine functions by adding 
increasingly faster one-dimensional oscillations of the type given in Example \ref{ex:rankoneAB}, which 'move' the gradient closer to the set $E$. Then one uses a careful limiting argument to ensure that in this process the gradients converge 
strongly.

This general strategy originates in the seminal work of Nash on $C^1$ isometric embeddings \cite{nash54}, 
which was subsequently extended and developed by 
Gromov \cite{gromov_pdr} into the far-reaching and powerful technique 
of convex integration. 
Although the technique was originally intended to deal with under-determined problems
 in geometry and topology, more recently the same
  ideas have been extended to various systems of partial differential equations arising 
  in continuum mechanics, most notably nonlinear
   elasticity \cite{Kirchheim:2002wc} and hydrodynamics \cite{SzekelyhidiJr:2014tu,delellis_szekelyhidi16,Buckmaster:2019et}. 

For many of the applications it suffices to consider the lamination convex hull $E^{lc}$ 
(this essentially corresponds to Gromov's $P$-convex hull \cite{gromov_pdr}). 
We recall briefly that a set is called lamination convex if for 
any rank-one connection  $A,B\in E$ (i.e. with $\textrm{rank}(A-B)=1$ the whole line segment $[A,B]$ is contained in $E$; 
and the lamination convex hull is the smallest lamination convex set containing $E$. 
In our setting the
 set $E$ contains no rank-one connections,  and hence is automatically lamination convex.
 The key point is that in this setting  one can to work with the potentially much larger
 rank-one convex hull,  defined by duality with rank-one convex functions.

\begin{definition}  \label{de:rc} A function $g: \R^{d \times m} \to \R$ is rank-one convex if it is convex along any line whose
direction is given by a matrix of rank-one. 
For a compact set $K \subset \R^{d \times m}$ the rank-one convex hull is defined as the set of points
which cannot be separated from $K$ by rank-one convex functions, i.e.,
\begin{align}  K^{rc} := \{& F \in \R^{d \times m} :  
\text{$g(F) \le 0$ whenever} \\   & \text{ $g|_K \le 0$ and $g$ is  rank-one convex.} \} \nonumber 
\end{align}
For an open set $U \subset \R^{d \times m}$ we define
\begin{equation}
U^{rc} = \bigcup_{\text{$K \subset U$ compact}}  K^{rc}.
\end{equation}
\end{definition}

We note that for ordinary convexity the definition of the convex hull via separation by convex
functions is equivalent to the definition by considering convex combinations. 
This is not true for rank-one convexity. In fact our analysis below relies heavily on certain
finite sets ('$T_N$ configurations', see Section~\ref{se:T4}) which have a nontrivial rank-one convex hull, 
but contain no rank-one connections.

The first key result in convex integration theory, relying on an iterated construction based on Example \ref{ex:rankoneAB}, is that for open sets $E$ the problem 
 \eqref{eq:Df_in_E},  \eqref{eq:affine_boundary} can be solved if $A \in E^{rc}$, see e.g., 
 \cite[Thm. 3.1]{muller_sverak03}. For many applications, including the case of split matrices $L$, this is not sufficient because in such cases the set $E$ is a closed, lower-dimensional subset. Furthermore, in our setting of $E=L\cap\Sigma$, not just $E$ but also $E^{rc}$ is lower-dimensional; indeed, observe that the functions $X\mapsto \pm\det X$ are rank-one convex (in fact rank-one affine), and consequently $(L\cap\Sigma)^{rc}\subset\Sigma$.   There are several methods to pass from open sets to closed (lower-dimensional) sets, see, e.g., 
 \cite{muller_sverak96, dacorogna_marcellini99, sychev01, kirchheim03, muller_sverak03} . In particular the constraint $(L\cap\Sigma)^{rc}\subset\Sigma$ has been treated in \cite{muller_sverak_constraint}. The main result of \cite{muller_sverak_constraint}, specialized to our setting \eqref{eq:Df_in_K0}-\eqref{eq:affine_boundary0}, reads as follows.

\begin{definition}[\cite{muller_sverak_constraint}, Def. 1.2.] \label{de:in_approx_constraint}
 Let 
\begin{equation}\label{eq:set_constraint}
 \Sigma := \{ X \in \R^{2 \times 2} : \det X = 1 \}.
 \end{equation}
and let $K \subset \Sigma$ be compact.
We say that a sequence of sets $U_i \subset \Sigma$ is an in-approximation relative to $\Sigma$  if the sets
$U_i$  are open in $\Sigma$, and the following two conditions are satisfied
\ben
\item $U_i \subset U_{i+1}^{rc}$; 
\item  \label{it:inapprox_det}  $\lim_{i \to \infty} \sup_{X \in U_i} \dist(X, K) = 0$.
\een
\end{definition}

Recall that a set $U \subset \Sigma$ is open in $\Sigma$ if there exists and open set $V \subset \R^{2 \times 2}$ such that
$U = \Sigma \cap V$. In \cite[Def. 1.2]{muller_sverak_constraint} the additional assumption that the $U_i$ be uniformly bounded is made. 
If $K$ is bounded this follows from property   \eqref{it:inapprox_det} in Definition~\ref{de:in_approx_constraint}.

\begin{theorem}[\cite{muller_sverak_constraint}, Thm.\ 1.3] \label{th:constraint}
Let $\Omega \subset \R^2$ be open, bounded and connected. Let $\Sigma$ be given by \eqref{eq:set_constraint}
 and let  $K \subset \Sigma$ be compact. Let $U_i$ be an in-approximation of $K$ relative to $\Sigma$ and 
assume that 
$$ A \in U_1.$$
Then there exists a Lipschitz map $f: \Omega \to \R^2$ such that 
$$ \g f(x) \in K \quad \text{for a.e.\ $x \in \Omega$} $$
and 
$$ f(x) = Ax \quad \text{for all $x \in \partial \Omega$.} $$
\end{theorem}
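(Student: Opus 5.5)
The plan is the standard convex integration iteration. I build a sequence of piecewise affine Lipschitz maps $f_i:\overline\Omega\to\R^2$ with $f_i(x)=Ax$ on $\partial\Omega$ and $\g f_i\in U_i$ a.e. The gradients $\g f_i$ should converge strongly in $L^1$, and I take $f$ to be the limit. Since the $U_i$ are uniformly bounded (they approach the compact set $K$ by property (2) of Definition~\ref{de:in_approx_constraint}), the $f_i$ are uniformly Lipschitz. Strong $L^1$ convergence of $\g f_i$ then gives $\g f\in\bigcap_j\overline{\bigcup_{i\ge j}U_i}\subset K$ a.e., again by property (2). The boundary condition passes to the limit by uniform convergence. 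To start the iteration I take $f_1(x)=Ax$, which is admissible since $A\in U_1$.

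\textbf{Key lemma (one step of the iteration).} Let $V\subset\Sigma$ be relatively open, let $F\in V^{rc}$, let $D\subset\R^2$ be an open set (e.g.\ a triangle) and let $\eps>0$. Then there is a piecewise affine $g$ on $D$ with $g(x)=Fx$ on $\partial D$, $|g(x)-Fx|<\eps$ and $\g g\in V$ a.e.

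I would prove the lemma in two stages.

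\emph{Stage 1: reduce $V^{rc}$ to laminates inside $\Sigma$.} By Definition~\ref{de:rc}, $F\in C^{rc}$ for some compact $C\subset V$. I would show that $F$ is the barycenter of a laminate of finite order supported in $V$; it suffices to control the support up to an arbitrarily small neighbourhood, using openness of $V$ in $\Sigma$. Every laminate whose barycenter lies in $\Sigma$ automatically has all its intermediate splitting points in $\Sigma$. The reason is that $\det$ is affine along rank-one lines, so a rank-one segment with both endpoints in $\Sigma$ lies in $\Sigma$, and conversely a rank-one line through a point of $\Sigma$ that contains its splitting endpoints stays in $\Sigma$. For the density statement itself I would use the duality between rank-one convex hulls and laminates: points of $C^{rc}$ are barycenters of measures that are limits of finite-order laminates. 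I would then use openness of $V$ relative to $\Sigma$ to absorb the error, by perturbing the endpoints within $\Sigma$.

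\emph{Stage 2: realize one rank-one splitting exactly, with gradients in $\Sigma$ near the segment.} Given $F=\lambda B'+(1-\lambda)B''$ with $B'-B''=a\otimes\xi$ and $B',B''\in\Sigma$, I would modify the laminate map of Example~\ref{ex:rankoneAB}. The modification should give a piecewise affine $g$ with affine boundary data $Fx$ on a small cut-off region. Its gradient equals $B'$ or $B''$ outside a set of small measure, and in the boundary layer it lies in a small neighbourhood of the segment $[B',B'']$ \emph{within} $\Sigma$. The main obstacle of the whole proof is exactly this boundary layer. The naive cut-off produces gradients off $\Sigma$, so the layer has to be constructed by hand. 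What I would try first is a triangulated construction: near the boundary, the gradient on each triangle is $F$ plus a rank-one matrix $b\otimes\eta$, chosen so that $\det(F+b\otimes\eta)=1$. This is a single linear condition on $b$ for fixed $\eta$, because $\det$ is rank-one affine, so there is enough freedom to match neighbouring triangles along common edges. If this fails, I would instead use that the tangent space of $\Sigma$ contains enough rank-one directions and correct small errors by a further rank-one perturbation. Iterating Stage 2 over the finitely many splittings of the laminate, and covering $D$ by a Vitali family of small scaled copies, gives the lemma.

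\textbf{Convergence.} At step $i$ I apply the key lemma with $V=U_{i+1}$ on each of the finitely many affine pieces of $f_i$ (each gradient lies in $U_i\subset U_{i+1}^{rc}$), with a tolerance $\eps_{i+1}$ still to be fixed. To force strong convergence I use the usual mollification trick. Choose $\delta_i$ with $\|\g f_i*\rho_{\delta_i}-\g f_i\|_{L^1}<2^{-i}$. Then choose $\eps_{i+1}$ so small that $\|f_{i+1}-f_i\|_{C^0}<\eps_{i+1}$ implies $\|(\g f_{i+1}-\g f_i)*\rho_{\delta_j}\|_{L^1}<2^{-i}$ for all $j\le i$. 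Combined with the uniform $L^\infty$ bound, this makes $\g f_i$ a Cauchy sequence in $L^1$. In the limit $\g f\in K$ a.e. and $f(x)=Ax$ on $\partial\Omega$, which proves Theorem~\ref{th:constraint}.
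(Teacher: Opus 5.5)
The paper does not prove this result; it quotes it from \cite{muller_sverak_constraint}, so your argument has to stand on its own. Your outer iteration is the standard scheme and is fine: the uniform Lipschitz bound, the mollifier trick for strong $L^1$ convergence, $\nabla f\in K$ in the limit, and $f_1=Ax$. The problems are in the key lemma, exactly where the constraint enters.

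\emph{Stage 1 rests on a false claim.} A laminate with barycenter in $\Sigma$ need not have its splitting points in $\Sigma$. The determinant is affine but not constant along rank-one lines: $\det(X+t\,a\otimes\xi)=\det X+t\,\cof X:(a\otimes\xi)$. For example, $I=\tfrac12\,\mathrm{diag}(2,1)+\tfrac12\,\mathrm{diag}(0,1)$ splits a point of $\Sigma$ along a rank-one direction into matrices of determinant $2$ and $0$. Only the reverse implication holds: if all atoms lie in $\Sigma$, so do all nodes.

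So you need a finite-order laminate with barycenter $F$ whose atoms lie \emph{exactly} in $V$. The ambient density result does not give this, because its atoms lie in an ambient neighbourhood of $C$ and have $\det$ only close to $1$. In your scheme that error is fatal, not absorbable. A gradient with $\det\neq1$ lies in no $U_j$, and since $\det$ is rank-one affine, no further lamination can bring it back to $\Sigma$.

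``Perturbing the endpoints within $\Sigma$'' does not settle this. To keep every splitting rank-one and the barycenter equal to $F$, you must also move interior nodes. The correction needed at a child is then its own determinant defect plus the error inherited from its parent, since moving the parent changes the tangent plane of $\Sigma$ along which the child must be reached. Nothing controls this accumulation, because the depth of the approximating laminates is unbounded in general. For the inner points of a $T_N$ configuration $K$, compactness and $K^{lc}=K$ force the depth to blow up as the atoms approach $K$.

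What you need is a genuinely relative statement, or some other mechanism. The statement is that $C^{rc}$, defined through all rank-one convex functions on $\R^{2\times2}$, lies in the lamination hull of $V$ generated by rank-one segments contained in $\Sigma$. That requires a proof of its own.

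\emph{Stage 2, the other substantive point, is left open} (``what I would try first \dots\ if this fails \dots''). After normalising by $F^{-1}\in SL(2)$ and a rotation, you need a piecewise affine $v$ with the following properties: $v=x$ on $\partial D$, $\det\nabla v=1$ exactly, gradients near the segment of shears $I+t\,e_1\otimes e_2$, and the correct volume fractions. Your ansatz $\nabla v=I+\beta\,\eta^\perp\otimes\eta$ does lie in $\Sigma$. But it is close to that segment only when $\eta\approx\pm e_2$ or $\beta\approx0$, whereas the cut-off must cross edges of all orientations. Whether continuity across edges can be achieved under these constraints is precisely the content of the lemma, and you do not verify it.

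There is a further gap in Stage 2. The boundary-layer gradients near $[B',B'']$ are not atoms of the laminate and need not lie in $V$, so ``iterating over the splittings'' does not yield $\nabla g\in V$ a.e. Those pieces must be reprocessed. This needs two ingredients:
\begin{enumerate}
\item The intermediate relative lamination levels must be relatively open in $\Sigma$. This holds here: replace $A,B,F$ by $GA,GB,GF$ with $G=F'F^{-1}\in SL(2)$.
\item An exhaustion argument in which the exceptional sets shrink geometrically.
\end{enumerate}
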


Our main result concerning split matrices, which will be proved in the next section, is the following
\begin{proposition}\label{prop:largeT5ex}
Let $c\geq 3$ and define the matrices 
\begin{equation}\label{T5e:ex}
\begin{split}
X_1&=\begin{pmatrix} c&0\\0&1/c\end{pmatrix},\, 
X_2=\begin{pmatrix} 1/c&0\\0&c\end{pmatrix},\\
X_3&=\begin{pmatrix} 0&-c\\1/c&0\end{pmatrix},\,
X_4=\begin{pmatrix} 0 &-1/c\\ c & 0\end{pmatrix},\,
X_5=\begin{pmatrix} 1&0\\0&1\end{pmatrix},	
\end{split}
\end{equation}
so that $X_1,X_2,X_5\in L_1\cap\Sigma$ and $X_3,X_4\in L_2\cap\Sigma$. Then the set $K=\{X_1,\dots,X_5\}$ admits an in-approximation relative to $\Sigma$. Consequently there exists $A\in \Sigma\setminus L$ such that for any bounded open $\Omega\subset\R^2$ there exist Lipschitz maps $f:\overline{\Omega}\to\R^2$ with
\begin{align*}
\g f(x)&\in K\quad \textrm{for a.e. }x\in\Omega,\\
f(x)&=Ax\quad\textrm{ for all }x\in\partial\Omega.
\end{align*}  
\end{proposition}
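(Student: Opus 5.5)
The plan is to exhibit an explicit in-approximation $U_i\subset\Sigma$ of $K=\{X_1,\dots,X_5\}$ relative to $\Sigma$ in the sense of Definition~\ref{de:in_approx_constraint}, and then apply Theorem~\ref{th:constraint}. The basic object is a $T_5$ configuration, in the sense of \cite{szekelyhidi_forster18}. This means finding a base point $P\in\Sigma$, rank-one directions $C_1,\dots,C_5$ and parameters $\kappa_i>1$ such that $P_{i+1}=P_i+C_i$ and $X_i=P_i+\kappa_iC_i$ (indices mod 5), with $\sum C_i=0$. Because the functions $\pm\det$ are rank-one affine, all the points $P_i$ automatically lie in $\Sigma$ once the $X_i$ do.

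First, I will write down the rank-one conditions $\det(X_i-P_i)=0$ and $\det(P_{i+1}-P_i)=0$ explicitly for the matrices in \eqref{T5e:ex}. The ordering will have to interleave the $L_1$ and $L_2$ elements, for instance $X_1,X_3,X_5,X_2,X_4$ or a similar cyclic order. From these equations I will solve for the $P_i$ and the $\kappa_i$ as functions of $c$, and check that $\kappa_i>1$ for all $c\ge3$.

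Second, I will verify that the configuration is \emph{large} in the sense of \cite{szekelyhidi_forster18}. Roughly, this requires that for every point of the configuration at least three of the rank-one directions are linearly independent, or the equivalent nondegeneracy condition stated there. The purpose is stability: every $5$-tuple $(Y_1,\dots,Y_5)$ in $\Sigma$ near $(X_1,\dots,X_5)$ is again a $T_5$ configuration, with data depending continuously on the $Y_i$. The same condition should let me perturb each $X_i$ within $\Sigma$ along a neighborhood while the relevant points $P_i$ stay on $\Sigma$.

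Third, I will build the $U_i$ following the standard scheme of \cite{muller_sverak03,szekelyhidi_forster18}. Take $U_1$ to be a small relatively open neighborhood in $\Sigma$ of an interior point of the polygonal loop $\bigcup_j[P_j,X_j]$. Then take $U_{i+1}$ to be the union, over perturbed configurations with $Y_j$ in $\Sigma$-neighborhoods of radius $\varepsilon_i\to0$ of the $X_j$, of small neighborhoods of the endpoints, arranged so that $U_i\subset U_{i+1}^{rc}$. This uses the fact that the rank-one convex hull of a $T_5$ configuration contains the segments $[P_j,Y_j]$. Stability under perturbation, which relies on largeness, lets the radii shrink, so $\sup_{U_i}\dist(\cdot,K)\to0$. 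Finally choose $A\in U_1\setminus L$; this is possible because $U_1$ is open in $\Sigma$ and $L\cap\Sigma$ is one-dimensional.

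I expect the main obstacle to be the explicit algebra: finding a cyclic ordering and a solution $P$ of the rank-one system with all $\kappa_i>1$, and then proving largeness. If the generic largeness check is unwieldy, I would first try the symmetric ansatz $P$ of the form $\begin{pmatrix}a&b\\-b'&a'\end{pmatrix}$, using the $c\leftrightarrow1/c$ symmetry of the data to reduce the number of unknowns. Then I would verify $\kappa_i>1$ as explicit polynomial inequalities in $c$ valid for $c\ge3$.
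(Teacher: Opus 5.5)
There is a genuine gap in your second and third steps: you treat largeness as a property of a single $T_5$ configuration, and that cannot work. In one configuration each vertex $X_i$ carries exactly one rank-one direction, $P_i-X_i$, and $X_i-P_i$ is parallel to $C_i=P_{i+1}-P_i$. So no condition of the form ``three independent rank-one directions at each point'' can be checked on one ordering.

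The notion you need (Definition~\ref{T5d:largeT5}) is a property of the \emph{set}. At least three different orderings $\sigma_1,\sigma_2,\sigma_3$ of $X_1,\dots,X_5$ must each be $T_5$ configurations. In addition, for every $i$ the three rank-one matrices $P_i^{\sigma_j}-X_i$, $j=1,2,3$, must be linearly independent, i.e.\ span the three-dimensional tangent space $T_{X_i}\Sigma$.

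The paper takes $\sigma_1=[1\,2\,3\,5\,4]$, $\sigma_2=[1\,2\,4\,5\,3]$, $\sigma_3=[1\,2\,5\,3\,4]$; $\sigma_3$ puts $X_1,X_2,X_5\in L_1$ consecutively, so interleaving $L_1$ and $L_2$ is not what matters. It verifies each ordering via the criterion $A^{\sigma,\mu}\lambda=0$ with $\mu>1$ and $\lambda\in(0,\infty)^5$; this is where $ab>8$, hence $c\ge 3$, enters. It then checks $\rank B^{(k)}=3$ through explicit nonvanishing $3\times 3$ minors (Proposition~\ref{pr:criterion_rank3}). Finally it invokes Theorem~\ref{th:large_T5} and Theorem~\ref{th:constraint}; no in-approximation is built by hand.

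Your rationale for largeness is also misplaced. Openness of the $T_5$ property under small perturbations holds for every $T_5$ configuration with $\det(X_i-X_j)\neq 0$, since $\beta>0$, a one-dimensional kernel and $\lambda>0$ are all open conditions. So openness is not what largeness buys. What it buys is the nesting $U_i\subset U_{i+1}^{rc}$, which your Step~3 only asserts (``arranged so that'').

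Since $K$ is finite, you cannot perturb the configuration inside $K$ as in the stability argument of \cite{muller_sverak03}. Your perturbed endpoints $Y_j$ range over $\Sigma$-neighbourhoods of the $X_j$. Those neighbourhoods at stage $i$ must themselves lie in the rank-one convex hull of the next, smaller stage. Perturbed copies of a single configuration add, near $X_j$, only thin pieces along perturbations of the one direction $P_j-X_j$. Your construction therefore has no mechanism for placing a relatively open subset of $\Sigma$ near $X_j$ inside the hull of strictly smaller such sets.

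Three independent rank-one directions at each $X_i$, spanning $T_{X_i}\Sigma$, are what make this possible. That is the content of \cite[Thm.~2.8]{szekelyhidi_forster18}.

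A minor point: solving the rank-one equations for $P_i,\kappa_i$ by hand is much heavier than the kernel criterion of Proposition~\ref{pr:criterion_TN} and Lemma~\ref{T5l:mu}. That criterion also directly supplies the inner points $P_i^{\sigma}$ needed for the rank check.
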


The proof of proposition, which will be given in   Section~\ref{sec:largeT5ex} follows 
 the strategy introduced in \cite{szekelyhidi_forster18} for the construction of an in-approximation, 
 which is based on proving that the set $\{X_1,\dots,X_5\}$ is a \emph{large $T_5$ set}, see Definition \ref{T5d:largeT5}.

\bigskip
\section{$T_N$ configurations}  \label{se:T4}

\subsection{Definition and a criterion for $T_N$ configurations in $2 \times 2$ matrices}

\begin{definition}  \label{de:T4}
Let $ N \ge 4$. An $N$-tuple $(X_1, \ldots,  X_N)$ of matrices in $\R^{m \times n}$ is called a $T_N$ configuration if 
$\rank(X_i - X_j ) > 1$ for $i \ne j$ and if there exist matrices $P,C_1, \ldots, C_N \in  \R^{m \times n}$
 and real numbers $\kappa_1,  \ldots, \kappa_N > 1$  such that
and
\begin{eqnarray*}
X_1 &=& P + \kappa_1 C_1, \\
X_2& =& P + C_1 + \kappa_2 C_2,\\
\vdots \\
X_N &=& P + C_1 +  \ldots  + C_{N-1} +  \kappa_N C_N
\end{eqnarray*}
and 
$$ \rank C_i = 1, \quad \sum_{i=1}^N C_i = 0.$$
\end{definition}

\begin{figure}
\begin{tikzpicture}[scale=1]
\def\PA{(2,0)}
\def\PB{(4,1)}
\def\PC{(3, 2.5)}
\def\PD{(1.5,2.9)}
\def\PE{(0.5, 1.6)}
\def\TA{\PA +  (1.5*2,1.5)}
\def\TB{\PB +  (-1.5,  1.5*1.5)}
\def\TC{\PC +  (-1.5*1.5,1.5*0.4)}
\def\TD{\PD +  (-1.5,-1.5*1.3)}
\def\TE{\PE +  (1.5*1.5,-1.5*1.6)}
\draw \TA -- \PA;
\draw \PA -- \TA;
\draw \TB -- \PB;
\draw \TC -- \PC;
\draw \TD -- \PD;
\draw \TE -- \PE;
\filldraw \PA circle (1pt)  node[anchor=north] {$P_1$};
\filldraw \PB circle (1pt)  node[anchor=north] {$P_2$};
\filldraw \PC circle (1pt)  node[anchor=west] {$P_3$};
\filldraw \PD circle (1pt)  node[anchor=south] {$P_4$};
\filldraw \PE circle (1pt)  node[anchor=east] {$P_5$};
\filldraw \TA circle (1pt)  node[anchor=north] {$X_1$};
\filldraw \TB circle (1pt)  node[anchor=south] {$X_2$};
\filldraw \TC circle (1pt)  node[anchor=south] {$X_3$};
\filldraw \TD circle (1pt)  node[anchor=north] {$X_4$};
\filldraw \TE circle (1pt)  node[anchor=north] {$X_5$};
\end{tikzpicture}
\caption{A $T_5$ configuration. The lines drawn are rank-one lines}
\end{figure}
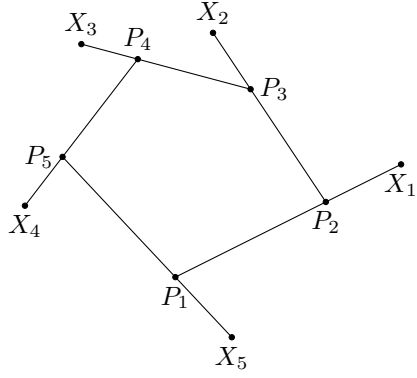

\medskip

We refer to the points
\begin{equation}  \label{eq:inner_points_T4}
P_i := P + \sum_{j=1}^{i-1} C_j, \quad i \in \{1, \ldots,  N\}
\end{equation}
(with $P_1 = P$)
as the inner points of the $T_N$ configuration. 

For the convenience of the reader we recall that a  fundamental property of a $T_N$ configuration $\underline X$ is that the inner points $P_i$  belong to the rank-one convex hull of $\{ X_1, \ldots, X_N\}$.
  \begin{lemma}  \label{l:TNrc}
Assume that $(X_1, \ldots, X_N)$ is a $T_N$ configuration. Then the inner points $P_i$, $i=1,\dots,N$ given by
  \eqref{eq:inner_points_T4}, as well as the line segments $[P_i,X_i]$ are contained in the rank-one convex 
hull $\{X_1, \dots,X_N\}^{rc}$.
\end{lemma}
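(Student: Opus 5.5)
The argument uses only the definition of the rank-one convex hull (Definition~\ref{de:rc}): a rank-one convex function that is nonpositive on $K:=\{X_1,\dots,X_N\}$ must be nonpositive at each inner point $P_i$ and along each segment $[P_i,X_i]$. Fix such a function $g$, so $g|_K\le 0$, and set $m:=\max_i g(P_i)$. The goal is to show $m\le 0$.

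\textbf{Step 1: the segments are rank-one lines.} From \eqref{eq:inner_points_T4} and Definition~\ref{de:T4} we have
$$X_i=P_i+\kappa_i C_i, \qquad P_{i+1}=P_i+C_i,$$
with indices taken cyclically. The cyclic identity $P_{N+1}=P_1$ holds because $\sum_i C_i=0$. Since $\rank C_i=1$, the points $P_i$, $P_{i+1}$ and $X_i$ all lie on the rank-one line $t\mapsto P_i+tC_i$. Moreover $P_{i+1}$ is the point with $t=1$, which lies strictly between $P_i$ ($t=0$) and $X_i$ ($t=\kappa_i>1$). Writing $\lambda_i:=1/\kappa_i\in(0,1)$, we get
$$P_{i+1}=(1-\lambda_i)P_i+\lambda_i X_i.$$

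\textbf{Step 2: convexity along each line.} Since $g$ is convex along this rank-one line,
$$g(P_{i+1})\le (1-\lambda_i)g(P_i)+\lambda_i g(X_i)\le (1-\lambda_i)g(P_i)\le (1-\lambda_i)\,m .$$
Choose $i$ with $g(P_{i+1})=m$, possible by cyclicity. Suppose $m>0$. Then $m\le (1-\lambda_i)m<m$, a contradiction. Hence $m\le 0$, so every $P_i$ lies in $K^{rc}$.

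\textbf{Step 3: the segments.} For a point $Y$ on $[P_i,X_i]$, the function $g$ restricted to the rank-one segment is convex. Its endpoint values satisfy $g(P_i)\le 0$ and $g(X_i)\le 0$, so $g(Y)\le 0$. Thus $[P_i,X_i]\subset K^{rc}$.

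\textbf{Main difficulty.} The only subtle point is the cyclic bookkeeping. One must check that $P_{N+1}=P_1$ via $\sum_i C_i=0$, so that the maximum of $g$ over the inner points can be attained at some index of the form $P_{i+1}$. One must also check that $\kappa_i>1$, which is what places $P_{i+1}$ strictly inside the segment $[P_i,X_i]$; this is why the factor $1-\lambda_i$ is strictly less than $1$. The hypothesis that $\rank(X_i-X_j)>1$ is not needed for this conclusion.
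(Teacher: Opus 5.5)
Your proof is correct and follows essentially the same route as the paper: you pick an inner point $P_{i+1}$ at which $\max_j g(P_j)$ is attained, use that it lies strictly inside the rank-one segment $[P_i,X_i]$ (with cyclic indexing justified by $\sum_i C_i=0$) to get a contradiction from convexity of $g$ there, and then use convexity along $[P_i,X_i]$ for the segments. The only differences are presentational: you argue directly for a fixed $g$ rather than by contradiction, and you derive the segment statement explicitly from the definition of $K^{rc}$, which the paper leaves implicit.
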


\begin{proof} 
Otherwise there exists a rank-one convex function $g:  \R^{2 \times 2} \to \R$ such that 
$g(X_i) \le 0$  and $M:= \max_i g(P_i) > 0$. Let $j$ be such that $g(P_j) = M.$
The point  $P_j$ lies in the interior of the line segment $[P_{j-1}, X_{j-1}]$  (here we count $j$ modulo $N$, i.e.,
  we set $P_0 = P_N$ and $X_0 = X_N$).
 Moreover $X_{j-1} - P_{j-1} = \kappa_{j-1} C_{j-1}$ is a rank-one matrix. Thus $g$ is convex
 on this segment. Since $g(P_{j-1}) \le M$,  $g(X_{j-1}) \le 0$ and $M > 0$  it follows that $g(P_j) < M$. This contradicts the 
 assumption $M = g(P_j)$. 
 Thus $P_i \in \{X_1, \dots, X_N\}^{rc}.$ 
  Moreover $\rank(X_i-P_i) = \rank{\kappa_i C_i} = 1$. Thus $[P_i, X_i] \subset  \{X_1, \ldots, X_N\}^{rc}$.

\end{proof}

In general it is not easy to verify  whether a given $N$-tuple of matrices
forms a $T_N$ configuration. For matrices in $\R^{2 \times 2}$,   
the  
 third author identified a
criterion which we now recall. Thus, let $(X_1,\dots,X_N)$ be an ordered set of $2\times 2$ matrices. We set 
\begin{equation}\label{T5e:Amu}	
A^{\mu}_{ij}=
\begin{cases}\det(X_i-X_j)&i<j,\\ 
0&i=j,\\
\mu\det(X_i-X_j)&i>j.\end{cases} 
\end{equation}

\begin{proposition}[\cite{szekelyhidi05}, Prop.\ 2]    \label{pr:criterion_TN}
Let $X_1, \ldots,  X_N \in  \R^{2 \times 2}$    with $\det (X_i - X_j) \ne 0$ for $i \ne j$ and define $A^\mu$ as in \eqref{T5e:Amu}. 
Then $(X_1, \ldots, X_N)$ is a $T_N$ configuration if and only if there exist $\lambda_1, \ldots, \lambda_N > 0$
and $\mu > 1$ such that 
\begin{equation}  \label{eq:Amu_lambda}
A^\mu \lambda = 0.
\end{equation}
Moreover, if  \eqref{eq:Amu_lambda} holds with 
$\lambda_1, \ldots, \lambda_N> 0$
and $\mu > 1$ then the inner points $P_k$ of the $T_N$-configuration  can be chosen as
\begin{equation} \label{eq:Pk}
P_k = \sum_{i=1}^N  \xi^{(k)}_i X_i
\end{equation}
where
\begin{equation} \label{eq:xik}
\xi^{(k)} = \frac1{c_k} v^{(k)} \quad \text{with $ c_k = \sum_{i} v_i^{(k)}$}
\end{equation}
and
\begin{equation} \label{eq:vk}
v^{(k)}_i = \begin{cases}\lambda_i & i\geq k\\ \mu\lambda_i & i< k.\end{cases}
\end{equation}

Conversely, if $(X_1, \ldots, X_N)$ is a $T_N$ configuration with inner points $P_i$, then there exist $\lambda_i > 0$ and $\mu > 1$ such that the 
points $P_i$ can be written in the form  \eqref{eq:Pk}--\eqref{eq:vk} and $A^\mu \lambda = 0$.
\end{proposition}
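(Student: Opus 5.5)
The plan is to use the quadratic structure of $\det$ on $\R^{2\times 2}$. A nonzero $2\times2$ matrix has rank one iff its determinant vanishes, and $\det$ is a quadratic form. So for weights $\xi_i\ge 0$ with $\sum\xi_i=1$ and any matrices $Y_i$ we have the polarization identity
\begin{equation*}
\det\Big(\sum_i \xi_i Y_i\Big)=\sum_i \xi_i\det Y_i-\frac12\sum_{i,j}\xi_i\xi_j\det(Y_i-Y_j).
\end{equation*}
Write $d_{ij}=\det(X_i-X_j)$, which is symmetric with $d_{ii}=0$. The whole proof reduces to bookkeeping with this identity applied to $Y_i=X_k-X_i$; since $Y_i-Y_j=X_j-X_i$, the last sum becomes $\sum_{i,j}\xi_i\xi_j d_{ij}$.

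\emph{Sufficiency.} Suppose $A^\mu\lambda=0$ with $\lambda_i>0$ and $\mu>1$, and define $v^{(k)},c_k,P_k$ as in \eqref{eq:Pk}--\eqref{eq:vk}, also for $k=N+1$. Since $v^{(N+1)}=\mu v^{(1)}$, we get $P_{N+1}=P_1$. The proof has four steps.

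(i) Reading row $k$ of $A^\mu\lambda=0$ gives $\sum_i v^{(k+1)}_i d_{ki}=0$. Because $d_{kk}=0$, this also equals $\sum_i v^{(k)}_i d_{ki}$.

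(ii) Set $S_k=\sum_{i,j}v^{(k)}_iv^{(k)}_jd_{ij}$. Then $S_1=\lambda^T\lambda$-type sum, namely $S_1=\frac{2}{1+\mu}\lambda^TA^\mu\lambda=0$. Since $v^{(k+1)}$ differs from $v^{(k)}$ only in coordinate $k$ (multiplied by $\mu$), we get $S_{k+1}-S_k=2(\mu-1)\lambda_k\sum_j v^{(k)}_jd_{kj}=0$ by (i). Hence $S_k=0$ for all $k$.

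(iii) The identity together with (i) and (ii) gives $\det(X_k-P_{k+1})=0$.

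(iv) Since $c_{k+1}P_{k+1}=c_kP_k+(\mu-1)\lambda_kX_k$, the point $P_{k+1}$ lies strictly inside the segment $[P_k,X_k]$. Hence $X_k-P_k$ is parallel to $X_k-P_{k+1}$ and has rank one. (It is nonzero, since otherwise $X_k=P_{k+1}$ would contradict strict interiority.)

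Now set $C_k:=P_{k+1}-P_k$, which has rank one. Then $X_k=P_k+\kappa_kC_k$ with $\kappa_k=c_{k+1}/(c_{k+1}-c_k)>1$, using $c_{k+1}-c_k=(\mu-1)\lambda_k>0$. Finally $\sum C_k=P_{N+1}-P_1=0$, so $(X_1,\dots,X_N)$ is a $T_N$ configuration.

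\emph{Necessity.} Given a $T_N$ configuration, the relation $P_{k+1}=P_k+C_k=(1-\kappa_k^{-1})P_k+\kappa_k^{-1}X_k$ lets us write each $P_k$ recursively as a combination of $P_1$ and $X_1,\dots,X_{k-1}$. Going once around the cycle ($P_{N+1}=P_1$) yields a linear equation $P_1=\theta P_1+\sum_i w_iX_i$ with $\theta=\prod_k(1-\kappa_k^{-1})\in(0,1)$ and $w_i>0$. Solving it expresses $P_1$ as a positive combination of the $X_i$.

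We then define $\lambda_i$ proportional to the weights of $X_i$ in $P_1$. We take $\mu>1$ so that passing from $P_k$ to $P_{k+1}$ multiplies the $k$-th weight by $\mu$; consistency around the cycle should force $\mu=1/\theta$ after the appropriate normalisation. The rank-one conditions $\det(X_k-P_{k+1})=0$ combined with the identity then give back the row equations. For this we still need $S=0$, which I expect to get by summing the row relations, or from $\det(X_k-P_k)=0$ and $\det(X_k-P_{k+1})=0$ simultaneously.

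The main obstacle is this converse bookkeeping: checking that the weights produced by the recursion really have the form $v^{(k)}$ with a \emph{single} common factor $\mu$, and recovering $S_k=0$. I would first verify it on $N=4$ by explicit recursion, then induct.
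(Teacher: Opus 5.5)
The paper gives no proof of this proposition; it quotes it from \cite{szekelyhidi05}, so I am judging your argument on its own.

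\textbf{Sufficiency.} This direction is correct. Steps (i)--(iii) are right, and $C_k=P_{k+1}-P_k$ with $\kappa_k=c_{k+1}/(c_{k+1}-c_k)$ gives the required decomposition.

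One justification is circular, though. ``Strict interiority'' cannot show $X_k\neq P_k$: if $P_k=X_k$, then $P_{k+1}=X_k$ as well and the segment is degenerate. Use the hypothesis $\det(X_i-X_j)\neq0$ instead. If $P_k=X_k$, then $X_{k+1}-P_{k+1}=X_{k+1}-X_k$ has nonzero determinant. But step (iii) at index $k+1$ says its positive multiple $X_{k+1}-P_{k+2}$ has zero determinant (indices mod $N$, with $P_{N+1}=P_1$).

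\textbf{Necessity.} This direction has a genuine gap, exactly at the step you flagged.

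The representation part is fine and needs no induction on $N$. Take $c_1=1$, $c_{k+1}=c_k\kappa_k/(\kappa_k-1)$, $\mu=c_{N+1}=1/\theta>1$ and $\lambda_k=(c_{k+1}-c_k)/(\mu-1)>0$. Then $\sum_i\lambda_i=c_1$ and $c_k=\sum_i v^{(k)}_i$. Moreover $Q_k:=c_k^{-1}\sum_i v^{(k)}_iX_i$ satisfies the same recursion as $P_k$. Hence $E_k:=P_k-Q_k$ obeys $E_{k+1}=(c_k/c_{k+1})E_k$, so $E_1=E_{N+1}=\mu^{-1}E_1$ and $E\equiv0$.

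What you do not have is $S_k=0$, and neither of your suggestions supplies it:
\begin{itemize}
\item ``Summing the row relations'' presupposes the relations you are trying to prove.
\item Using $\det(X_k-P_k)=0$ and $\det(X_k-P_{k+1})=0$ together gives nothing beyond either one alone, since both matrices are positive multiples of $C_k$. With $R_k:=\sum_i v^{(k)}_i d_{ki}$ and the algebraic identity $S_{k+1}=S_k+2(\mu-1)\lambda_kR_k$, they only yield $2R_k=S_k/c_k=S_{k+1}/c_{k+1}$. That is, $S_k/c_k$ is independent of $k$, and this is all the information at a single index gives.
\end{itemize}

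The missing step is to close the cycle once more, now for determinants. Since $v^{(N+1)}=\mu v^{(1)}$, you have $S_{N+1}/c_{N+1}=\mu S_1/c_1$. Constancy then forces $(\mu-1)S_1=0$, hence $S_k=0$ for all $k$. Then $R_k=0$, which by your step (i) is row $k$ of $A^\mu\lambda=0$.

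Equivalently, $\det$ is affine on the rank-one segment $[P_k,X_k]$. So $D_k:=c_k\det P_k-\sum_i v^{(k)}_i\det X_i$ satisfies $D_{k+1}=D_k$, while $D_{N+1}=\mu D_1$; thus $D\equiv0$. This is the identity \eqref{eq:det_Pk} that the paper quotes from \cite{szekelyhidi05}. By your polarization identity with $Y_i=X_i$, $D_k=-S_k/(2c_k)$, so $D\equiv 0$ is exactly $S_k=0$.
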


 In the setting or Proposition~\ref{pr:criterion_TN} we also have
\begin{equation}  \label{eq:det_Pk}
\det P_k = \sum_{i=1}^N  \xi^{(k)}_i \det X_i,
\end{equation}
see \cite{szekelyhidi05}, Lemma 3 and equation (7).

\subsection{$T_4$ configurations in $L\cap\Sigma$}\label{sec:T4ex}
As a first application of Proposition \ref{pr:criterion_TN} we prove Lemma \ref{lem:T4}. 

\begin{proof}[Proof of Lemma \ref{lem:T4}] 
Recall that we define $(X_1,\dots,X_4)$ as
\begin{equation}\label{e:T4example}
\begin{split}
	X_1&=\begin{pmatrix}c&0\\0&1/c\end{pmatrix}, X_2=\begin{pmatrix}1/c&0\\0&c\end{pmatrix},\\ X_3&=\begin{pmatrix}0&-c\\1/c&0\end{pmatrix}, X_4=\begin{pmatrix}0&-1/c\\c&0\end{pmatrix},
	\end{split}
\end{equation}
where $c>0$, and our aim is to show that for certain values of $c$ the set $(X_1,\dots,X_4)$ is a $T_4$-configuration. The matrix $A^{\mu}$ in  \eqref{T5e:Amu} is then given by 
\begin{equation} \label{eq:Amu_X}
A^\mu = \begin{pmatrix} 
0 &    -a &  2   & 2\\
-a\mu  & 0 & 2 & 2 \\
2\mu  & 2\mu  & 0 & -a \\
2\mu  & 2\mu  & -a\mu  & 0
\end{pmatrix},
\end{equation}
where $a=(c-\frac{1}{c})^2$. 
In view of Proposition 5.5 we need to show that there exist $\mu > 1$ and $ \lambda \in (0,\infty)^4$   with
$$ A^\mu \lambda = 0.$$
For $\mu \ne 0$ and $\lambda_1\neq 0$ the equation $A^\mu \lambda = 0$  is equivalent to
\begin{equation} \label{eq:lambda_reduced}
\lambda^T =(1, \mu , \alpha, \mu \alpha) \quad
\text{and} \quad
\begin{pmatrix} -a \mu &  2 (1 + \mu) \\
2 \mu(1 + \mu) & -a\mu
\end{pmatrix}
\binom{1}{\alpha} = 0
\end{equation}
and we are looking for  solutions with $\mu >1$ and $\alpha > 0$. For $\mu \ne 0$ the 
equation for $\binom{1}{\alpha}$ has a non-trivial solution if and only if 
\begin{equation}  \label{eq:det_zero}
0 = \mu^{-1} \det \begin{pmatrix} -a \mu &  2 (1 + \mu) \\
 2\mu (1 + \mu) &- \mu a
\end{pmatrix} = a^2 \mu - 4 (1+ \mu)^2.
\end{equation}
Set 
$$ \gamma = \frac{a^2}{4}.$$
Then  \eqref{eq:det_zero}
is equivalent to 
\begin{equation}   \label{eq:quadratic_eqn_mu}  (1 + \mu)^2 - \gamma \mu = 0.
\end{equation}
The solutions of this equation are given by 
$$ \mu = \frac{\gamma-2}{2} \pm \frac12 \sqrt{(\gamma-2)^2 - 4}.$$
Since $\gamma \ge 0$ a real solutions exist only if $\gamma \ge 4$ and solution with $\mu > 1$ exist 
only if $\gamma > 4$. In fact, for $\gamma > 4$ there is a unique solution with $\mu > 1$, namely, 
\begin{equation}  \label{eq:barmu}
\mu = \bar \mu:= \frac{\gamma-2}{2} +  \frac12 \sqrt{(\gamma-2)^2 - 4}.
\end{equation}
Finally we need to check that  for  $\gamma > 4$ and $\mu = \bar \mu$ there exists a solution with $\alpha > 0$. But this
 follows easily from the fact that $a > 0$.  
  
 Summarizing, we have shown that  for $c > 0$ the matrices $(X_1,\dots,X_4)$ in \eqref{e:T4example} form a $T_4$ configuration
  if and only if $a^2 > 16$ or, equivalently $a > 4$. This is in turn  equivalent to $|c-\frac{1}{c}|>2$. Assuming that $c>1$, this is equivalent to $c^2-2c-1>0$, leading to $c>1+\sqrt{2}$.   
\end{proof}

\subsection{$T_5$-configurations and large $T_5$ sets}

From now on we specialize to the case $N=5$. The following lemma, based on Lemma 2.4 in \cite{szekelyhidi_forster18} gives a simple criterion for the existence of $\mu>1$ in Proposition \ref{pr:criterion_TN}:

\begin{lemma}\label{T5l:mu}
Let $(X_1,\dots,X_5)$ be an ordered set of $2\times 2$ matrices
  with $\det (X_i - X_j) \ne 0$ for $i \ne j$ and let $A^\mu$ be defined as above in \eqref{T5e:Amu}. Further, set
\begin{equation}\label{T5e:alphabeta}
\alpha=A_{12} A_{23} A_{34} A_{45} A_{51},\quad \beta=-\frac{\det A}{2\alpha}.   
\end{equation}
Then, there exists $\mu_*>1$ with $\det A^{\mu_*}=0$ if and only if $\beta>0$ and in this case
\begin{equation}\label{T5e:mu*}
\mu_*=1+\frac{\beta}{2}+\frac{1}{2}\sqrt{\beta^2+4\beta}.
\end{equation}
 Moreover, if $\beta > 0$ then $\ker A^{\mu^*}$ is one-dimensional and thus the vectors $\xi^{(k)}$ in Proposition~\ref{pr:criterion_TN}
are uniquely determined.\\
I am not sure whether this is really needed, but it certainly helps in the discussion about permutations and the identification of the $\xi^{\sigma, k}$.
\end{lemma}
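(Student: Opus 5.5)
The plan is to compute $p(\mu):=\det A^\mu$ explicitly as a polynomial in $\mu$, using only the combinatorial structure of $A^\mu$ and the symmetry $d_{ij}:=\det(X_i-X_j)=d_{ji}$. No special property of $2\times 2$ determinants beyond this symmetry should be needed. The target identity is
\begin{equation*}
p(\mu)=\alpha\,\mu\,(\mu+1)\bigl(\mu^2-(2+\beta)\mu+1\bigr).
\end{equation*}
The claims of the lemma will then be read off from this factorization. Note that $\alpha\neq 0$, since all $d_{ij}\neq 0$ by hypothesis, so $\beta$ is well defined.

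\emph{Step 1: structure of $p$.} I will expand $p(\mu)$ by the Leibniz formula as a sum over permutations $\sigma$ of $\{1,\dots,5\}$. The diagonal of $A^\mu$ vanishes, so only derangements contribute. Each term carries the factor $\mu^{k(\sigma)}$, where $k(\sigma)=\#\{i:\sigma(i)<i\}$. For a derangement, $1\le k(\sigma)\le 4$, so $p(\mu)=c_1\mu+c_2\mu^2+c_3\mu^3+c_4\mu^4$.

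Next, symmetry of $d$ gives $(A^\mu)^T=\mu A^{1/\mu}$ for $\mu\neq0$. Taking determinants yields $p(\mu)=\mu^5p(1/\mu)$, so $p$ is palindromic: $c_1=c_4$ and $c_2=c_3$.

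To compute $c_1$, I will classify the derangements with exactly one descent $\sigma(i)<i$. I expect this forces $\sigma(i)=i+1$ for $i\le4$ and $\sigma(5)=1$. This is a $5$-cycle, hence an even permutation, so $c_1=A_{12}A_{23}A_{34}A_{45}A_{51}=\alpha$.

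\emph{Step 2: factorization.} Write $p(\mu)=\mu\,q(\mu)$, where $q$ is a palindromic cubic. Every palindromic cubic vanishes at $\mu=-1$, so $q(\mu)=(\mu+1)(\alpha\mu^2+e\mu+\alpha)$ for some $e\in\R$. To identify $e$, evaluate at $\mu=1$, where $A^1=A$:
\begin{equation*}
\det A=2(2\alpha+e),\qquad\text{hence}\qquad e=-\alpha(2+\beta),
\end{equation*}
by the definition of $\beta$. This gives the target identity.

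\emph{Step 3: roots and kernel.} The roots $0$ and $-1$ are not greater than $1$. So the question reduces to the quadratic $\mu^2-(2+\beta)\mu+1$, whose roots have product $1$ and sum $2+\beta$.

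If $\beta>0$, the discriminant $\beta^2+4\beta$ is positive, and there are two distinct positive reciprocal roots. Exactly one of them exceeds $1$, and it is the formula \eqref{T5e:mu*}. Equivalently, $t=\mu-1$ solves $t^2=\beta(t+1)$.

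If $-4<\beta<0$, the roots are complex. If $\beta=0$, there is a double root at $1$. If $\beta\le-4$, both roots are negative. So no root exceeds $1$ unless $\beta>0$.

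For the kernel, suppose $\beta>0$. Then $\mu_*$ is a simple root of $p$, so $p'(\mu_*)\neq0$. By Jacobi's formula, $p'(\mu_*)=\operatorname{tr}\bigl(\operatorname{adj}(A^{\mu_*})\,\tfrac{d}{d\mu}A^\mu\bigr)$. Hence $\operatorname{adj}(A^{\mu_*})\neq0$, so $A^{\mu_*}$ has rank $4$ and its kernel is one-dimensional. The vectors $\xi^{(k)}$ are then determined uniquely by the normalization in \eqref{eq:xik}.

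The main point needing care is Step 1: the palindromic symmetry and the fact that only the cyclic shift contributes to $c_1$ (including its sign). Everything after that is elementary.
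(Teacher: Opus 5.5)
Your proposal is correct and follows essentially the same route as the paper. Both arguments use:
\begin{itemize}
\item the transpose identity $(A^\mu)^T=\mu A^{1/\mu}$, which gives palindromicity and the factor $\mu(\mu+1)$;
\item the identification of the extreme coefficients as $\alpha$;
\item evaluation at $\mu=1$ to express the middle coefficient through $\beta$;
\item the simple-root/cofactor argument for the one-dimensional kernel.
\end{itemize}
The only cosmetic difference is how $c_1=\alpha$ is obtained. You classify the derangements with a single descent in the Leibniz expansion. The paper instead divides the first column by $\mu$, sets $\mu=0$, and reads off the determinant after a cyclic column permutation, which is the same computation.
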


 Note that $\alpha \ne 0$ since $a_{ij} = \det(X_i-X_j) \ne 0$ for $i \ne j$.
 
\begin{proof}
Observe, first of all, that $\mu\mapsto p(\mu):=\det A^{\mu}$ is a degree 4 polynomial, with a trivial zero at $\mu=0$. Moreover, since $(A^\mu)^T=\mu A^{(\mu^{-1})}$, we have the identity $\det A^\mu=\mu^5\det A^{(\mu^{-1})}$. Consequently, $p(-1) = 0$ and thus
$p(\mu) = \mu (\mu+1) (a \mu^2 + b \mu + c)$. The identity $\det A^\mu=\mu^5\det A^{(\mu^{-1})}$ implies that $c=a$.
We claim that 
$$ a = \lim_{\mu \to 0} \frac{p(\mu)}{\mu} = \alpha$$
Indeed the limit can be evaluated by dividing the first colum of $A^\mu$ by $\mu$, setting $\mu = 0$ and evaluating the remaining determinant by 
inspection
\begin{eqnarray*}
a &=& \det\begin{pmatrix} 0& A_{12}& A_{13}& A_{14}& A_{15}\\ A_{12}&0& A_{23}& A_{24}& A_{25}\\
A_{13}&0&0& A_{34}& A_{35}\\ A_{14}&0&0&0& A_{45}\\ A_{15}&0&0&0&0\end{pmatrix}=
\det\begin{pmatrix}  A_{12}& A_{13}& A_{14}& A_{15}&0\\0& A_{23}& A_{24}& A_{25}& A_{12}\\
0&0&A_{34}& A_{35}& A_{13}\\0&0&0& A_{45}& A_{14}\\0&0&0&0& A_{15}\end{pmatrix}  \\ 
&=&  \alpha.
\end{eqnarray*}
Now  the polynomial $\alpha \mu^2 + b \mu + \alpha$  has a zero $\mu_* > 1$ if and only if $\alpha \ne 0$ and $\frac{b}{\alpha} < -2$. 
In that case the other zero is given by $1/{\mu^*}$.
We have  $\det A = p(1) = 2 (2\alpha + b)$ and for $\alpha \ne 0$ we get   $-\beta = 2 + \frac{b}{\alpha}$.   
Hence there exists a zero $\mu^* >1$ if and only if $\beta > 0$ and the expression  \eqref{T5e:mu*} follows easily. 
Moreover $\mu^*$ is a simple zero and thus 
$$ 0 \ne \frac{d}{d\mu}|_{\mu= \mu^*} \det A^\mu = \cof A^{\mu^*} \cdot  \frac{d}{d\mu}|_{\mu= \mu^*} A^\mu.$$ 
Hence $\cof A^{\mu^*} \ne 0$ and
 thus $\rank A^{\mu^*}=4$ 
 and $\dim \ker A^{\mu^*} = 1$.
\end{proof}

\bigskip

Note that the property of being a $T_N$ configuration in Definition \ref{de:T4} depends not just on the set $\{X_1,\dots,X_N\}$ but also on the specific ordering $(X_1,\dots,X_N)$. Thus, one may ask whether the same set of $N$ matrices is a $T_N$ configuration for several different orderings. Indeed, as an example one may easily check that the set $\{X_1,\dots,X_4\}$ in Lemma \ref{lem:T4} (see \eqref{e:T4example}) is a $T_4$ configuration for all $6$ possible orderings. 
Next, we consider the effect of different orderings for $5$-element sets. To fix notation, we denote by $S_5$ the group of permutations of $5$ elements. For our purposes it is helpful to keep track of the orderings induced by permutations, so that, for any $\sigma\in S_5$ we write 
$$
\sigma=\bigl[\sigma(1)\,\sigma(2)\,\sigma(3)\,\sigma(4)\,\sigma(5)\bigr].
$$
Let $\sigma\in S_5$. Applying the general criterion in Proposition \ref{pr:criterion_TN} as well as Lemma~\ref{T5l:mu} to the ordered set 
$$
\left(X_{\sigma(1)},X_{\sigma(2)},X_{\sigma(3)},X_{\sigma(4)},X_{\sigma(5)}\right),
$$  we obtain the following result.

\begin{proposition}   \label{pr:T5_permutation} 
(i) The tuple
$X^\sigma := \left(X_{\sigma(1)},X_{\sigma(2)},X_{\sigma(3)},X_{\sigma(4)},X_{\sigma(5)}\right)$ is a $T_5$ configuration if and only if 
there exist $\mu^{\sigma}>1$ and $\lambda_1^{\sigma},\dots,\lambda_5^{\sigma}>0$ such that
$$
A^{\sigma,\mu^{\sigma}}\lambda^{\sigma}=0,
$$
where
$$
A_{ij}^{\sigma,\mu}=\begin{cases}\det(X_i-X_j)&\sigma^{-1}(i)<\sigma^{-1}(j),\\ 
0&i=j,\\
\mu\det(X_i-X_j)&\sigma^{-1}(i)>\sigma^{-1}(j).\end{cases} 
$$
Here  $\sigma^{-1}$ denotes  the inverse permutation of $\sigma$. 

(ii) If $X^\sigma$ is a $T_5$ configuration then the set of inner points is given by
$\{ P^\sigma_1, \ldots P^\sigma_5\}$ where   
\begin{equation} \label{T5e:Pki}
P_k^\sigma = \sum_{i=1}^5  \xi^{(\sigma, k)}_i X_i
\end{equation}
with  $\xi^{(\sigma, k)} = (\sum_{i} v_i^{(\sigma,k)})^{-1} v^{(\sigma,k)}$, 
\begin{equation}\label{T5e:vki} 
v^{(\sigma,k)}_i = \begin{cases}\lambda^\sigma_i & \sigma^{-1}(i)\geq \sigma^{-1}(k)\\ \mu^\sigma\lambda^\sigma_i & \sigma^{-1}(i)< \sigma^{-1}(k), \end{cases}
\end{equation}
and $v^{(\sigma, k)} = \sum_{i=1}^5 v^{(\sigma, k)}_i$.
In particular $X_k$ is rank-one connected to $P^\sigma_k$. 

(iii) Set $A = A^{\sigma,1}$ and 
\begin{equation}\label{e:permuteddeterminants}
\alpha^\sigma := A_{\sigma(1) \sigma(2)}  A_{\sigma(2) \sigma(3)} A_{\sigma(3) \sigma(4)} A_{\sigma(4) \sigma(5)}  A_{\sigma(5) \sigma(1)}.
\end{equation}
Then $ \mu \mapsto \det A^{\sigma, \mu}$  has a zero  $\mu^* > 1$ if and only if
$$ \beta^\sigma := -  \frac{\det A}{\alpha^\sigma} > 0.$$
If this condition holds then $\mu^*$ is given by  \eqref{T5e:mu*} with $\beta$ replaced by $\beta^\sigma$.
\end{proposition}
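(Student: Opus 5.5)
This proposition is a relabelling of Proposition~\ref{pr:criterion_TN} and Lemma~\ref{T5l:mu}. The plan is to apply both to the reordered tuple $Y=(Y_1,\dots,Y_5)$, $Y_l := X_{\sigma(l)}$, and translate indices back via $l=\sigma^{-1}(i)$. The hypothesis $\det(Y_l-Y_m)\neq 0$ for $l\ne m$ holds because it holds for the $X_i$.

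For (i), let $B^\mu$ be the matrix \eqref{T5e:Amu} built from $Y$. Then
\[
B^\mu_{lm} = \det(Y_l - Y_m)\ \text{if } l<m, \qquad B^\mu_{lm}=\mu\det(Y_l-Y_m)\ \text{if } l>m .
\]
Setting $l=\sigma^{-1}(i)$ and $m=\sigma^{-1}(j)$, this reads $B^\mu_{\sigma^{-1}(i)\sigma^{-1}(j)} = A^{\sigma,\mu}_{ij}$. Hence $B^\mu = \Pi^T A^{\sigma,\mu}\Pi$ for the permutation matrix $\Pi$ with $(\Pi\lambda)_i = \lambda_{\sigma^{-1}(i)}$, i.e.\ $B^\mu=\Pi^{-1}A^{\sigma,\mu}\Pi$. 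Therefore $B^\mu\tilde\lambda=0$ if and only if $A^{\sigma,\mu}\lambda^\sigma=0$ with $\lambda^\sigma = \Pi\tilde\lambda$, that is, $\lambda^\sigma_i=\tilde\lambda_{\sigma^{-1}(i)}$. Positivity of all entries is preserved. Proposition~\ref{pr:criterion_TN}, applied to $Y$, then gives (i).

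For (ii), Proposition~\ref{pr:criterion_TN} gives the inner points of $Y$ as
\[
P_k = \sum_l \xi^{(k)}_l Y_l ,\qquad v^{(k)}_l = \tilde\lambda_l \ \text{if } l\ge k, \qquad v^{(k)}_l=\mu\tilde\lambda_l \ \text{if } l<k .
\]
Here $P_k$ is the inner point attached to $Y_k = X_{\sigma(k)}$. Relabel it as $P^\sigma_{\sigma(k)}$ and substitute $l=\sigma^{-1}(i)$, $k=\sigma^{-1}(k')$. This yields exactly \eqref{T5e:vki}, with the normalisation unchanged since it is a sum over all indices. The rank-one connection between $X_k$ and $P^\sigma_k$ follows because $Y_k - P_k = \kappa_k C_k$ has rank one.

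For (iii), apply Lemma~\ref{T5l:mu} to $Y$. Because $B^1$ is conjugate to $A=A^{\sigma,1}$ by a permutation matrix, $\det B^1=\det A$. The product $B_{12}B_{23}B_{34}B_{45}B_{51}$ equals $\alpha^\sigma$ in \eqref{e:permuteddeterminants}, using $B_{lm}=A_{\sigma(l)\sigma(m)}$ at $\mu=1$. Likewise $\det B^\mu=\det A^{\sigma,\mu}$ for every $\mu$. The lemma then says a root $\mu^*>1$ exists if and only if $-\det A/(2\alpha^\sigma)>0$, with $\mu^*$ given by \eqref{T5e:mu*}. The stated $\beta^\sigma$ omits the factor $2$, but this does not affect the sign condition. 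The one real subtlety is that, as written, the formula for $\mu^*$ should use $-\det A/(2\alpha^\sigma)$. I would flag this in the proof and keep the lemma's normalisation there.

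The main obstacle is purely bookkeeping: keeping $\sigma$ versus $\sigma^{-1}$ straight when conjugating by $\Pi$. I would check it on a transposition before writing the general statement.
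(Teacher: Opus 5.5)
Your proposal is correct and matches the paper's proof. Both apply Proposition~\ref{pr:criterion_TN} and Lemma~\ref{T5l:mu} to the reordered tuple $X^\sigma$ via $A^\mu(X^\sigma)_{ij}=A^{\sigma,\mu}_{\sigma(i)\sigma(j)}$, identify $P_k=P^\sigma_{\sigma(k)}$ by unwinding indices, and use that the determinant is invariant under simultaneous row and column permutations. Your flag on the missing factor $2$ is also right, and the paper's proof does not address it: the sign condition for $\beta^\sigma$ is unaffected, but \eqref{T5e:mu*} gives the root $\mu^*$ only when evaluated at $-\det A/(2\alpha^\sigma)$, not at $\beta^\sigma=-\det A/\alpha^\sigma$ as stated.
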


\begin{proof}  (i) Fix a permutation $\sigma \in S_5$. For a tuple 
$Y = (Y_1, \ldots, Y_5)$ define $A^\mu(Y)$ as in \eqref{T5e:Amu} with $X_i$ replaced by $Y_i$. 
For $\lambda \in \R^5$ define $\lambda^\sigma$ by $\lambda^\sigma_i = \lambda_{\sigma^{-1}(i)}$. 
It follows directly from the definitions that $A^\mu(X^\sigma)_{ij} = A^{\sigma, \mu}_{\sigma(i) \sigma(j)}$.
In particular
$$ A^\mu(X^\sigma) \lambda = 0 \quad \Longleftrightarrow \quad A^{\sigma, \mu} \lambda^\sigma = 0.$$
Thus assertion (i) follows from   Proposition~\ref{pr:criterion_TN} applied to $X^\sigma$ instead of $X$. 

(ii) Let $\mu^\sigma >1$ and $\lambda \in (0,\infty)^5$ be such that $A^{\mu^\sigma}(X^\sigma) \lambda = 0$.
Define $\xi^{(k)}$ and $P_k$ as in Proposition~\ref{pr:criterion_TN}. Then $X^\sigma$ is a $T_5$ configuration with interior points
$P_k = \sum_{i=1}^5 \xi^{(k)}_i X_{\sigma(i)}$. Moreover $P_k$ is rank-one connected to $(X^\sigma)_k = X_{\sigma(k)}$. 
Unwinding definitions, we see that $\xi^{(k)}_i = \xi^{(\sigma, \sigma(k))}_{\sigma(i)}$.
Thus $P_k = P^{\sigma}_{\sigma(k)}$. Hence $X_{\sigma(k)}$ is rank-one connected to $P^\sigma_{\sigma(k)}$ for all $k$. 

(iii) Set $A(X^\sigma) = A^1(X^\sigma)$. Note that $ A = A^{\sigma, 1}$ is independent of $\sigma$ and $A_{\sigma(i) \sigma(j)} = A(X^\sigma)_{ij}$.
Since $A(X^\sigma)$ is obtained from $A$ by permutation rows and columns with the same permutation we have
 $\det A = \det A(X^\sigma)$. The assertion now follows from Lemma~\ref{T5l:mu} applied to $X^\sigma$. 
\end{proof}

Next, we recall the following definition from \cite{szekelyhidi_forster18}:

\begin{definition}\label{T5d:largeT5}
 We call a five-point set $\{X_1,\ldots,X_5\} \subset \R^{2 \times 2}$ a \emph{large $T_5$-set} if there exist at least three permutations $\sigma_1,\sigma_2,\sigma_3$
such that $(X_{\sigma_j(1)},\dots, X_{\sigma_j(5)})$ is a $T_5$-configuration for each $j=1,2,3$, and moreover the associated rank-one matrices $\{P_i^{\sigma_j}-X_i:\,j=1,2,3\}$ 
are linearly independent for all $i=1,\dots,5$.
\end{definition}

The significance of this definition is the following 
\begin{theorem}[Theorem 2.8 \cite{szekelyhidi_forster18}]  \label{th:large_T5}
Let $\Sigma=\{X\in\R^{2\times 2}:\det X=1\}$.
	If $K=\{X_1,\dots,X_5\}\subset\Sigma$ is a large $T_5$ set, then $K$ admits an in-approximation relative to $\Sigma$.
\end{theorem}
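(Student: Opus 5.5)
The plan is to build the in-approximation $U_k$ explicitly as small relatively open neighbourhoods of $K$ in $\Sigma$ whose size shrinks geometrically, $U_k=\bigcup_{i=1}^5 \bigl(B_{r_k}(X_i)\cap\Sigma\bigr)$ with $r_{k+1}=\theta r_k$ for some fixed $\theta\in(0,1)$. Property (2) of Definition~\ref{de:in_approx_constraint} is then automatic. Everything reduces to a single local covering statement:
\begin{equation*}
B_r(X_i)\cap\Sigma\subset \Bigl(\bigcup_{l=1}^5 B_{\theta r}(X_l)\cap\Sigma\Bigr)^{rc}\quad\text{for all $i$ and all $0<r\le r_0$.}
\end{equation*}
Strictly, $U_{k+1}^{rc}$ is defined through compact subsets of the open set. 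So I would prove this with $B_{\theta r}$ replaced by slightly smaller closed balls, and shrink $\theta$ a little if needed.

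\textbf{Step 1 (stability of $T_5$ configurations).} Fix one of the three permutations $\sigma=\sigma_j$. Lemma~\ref{T5l:mu} and Proposition~\ref{pr:T5_permutation} give $\beta^\sigma>0$, a simple root $\mu^\sigma>1$, and a one-dimensional kernel spanned by a vector $\lambda^\sigma$ with all entries positive. These are open conditions, and all the data depend continuously (in fact real-analytically) on the tuple. Hence every tuple $Y=(Y_1,\dots,Y_5)$ close to $(X_1,\dots,X_5)$ with ordering $\sigma$ is again a $T_5$ configuration. Its inner points $P_l^\sigma(Y)$ are given by \eqref{T5e:Pki} and depend smoothly on $Y$.

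By \eqref{eq:det_Pk}, if all $Y_l\in\Sigma$ then also $\det P_l^\sigma(Y)=1$. Moreover $Y_l-P_l^\sigma(Y)$ has rank one, and $\det$ is affine along rank-one lines. Therefore the whole segment $[P_l^\sigma(Y),Y_l]$, together with its extension a little beyond $Y_l$, lies in $\Sigma$. By Lemma~\ref{l:TNrc} the segment lies in $\{Y_1,\dots,Y_5\}^{rc}$.

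\textbf{Step 2 (the three directions span $T_{X_i}\Sigma$).} The rank-one matrices $d_j:=P_i^{\sigma_j}-X_i$, $j=1,2,3$, are tangent to $\Sigma$ at $X_i$, since $\det$ is constant along each of these lines. By the large $T_5$ hypothesis they are linearly independent, so they span the 3-dimensional space $T_{X_i}\Sigma$.

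For a fixed $i$ and $j$, consider the map
\[(Y,t)\mapsto Y_i+t\bigl(P_i^{\sigma_j}(Y)-Y_i\bigr),\]
defined on tuples $Y\in\Sigma^5$ near $X$. Its image lies in $\{Y_1,\dots,Y_5\}^{rc}\cap\Sigma$. I would combine the three families, using each $j$ with $t$ small and either sign, to produce a map whose differential at $(X,0)$ is onto $T_{X_i}\Sigma$. A negative $t$ is allowed by placing the perturbed vertex $Y_i$ itself on the far side, at distance about $t$ from $X_i$, while $Z$ lies on the segment towards the inner point.

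\textbf{Step 3 (quantitative covering, the main obstacle).} Given $Z\in B_r(X_i)\cap\Sigma$, write $Z-X_i\approx\sum_j s_jd_j$ with $|s|\lesssim r$, using Step 2 and the chart of $\Sigma$ near $X_i$. The aim is to choose a perturbed configuration $Y$ with all vertices within $\theta r$ of $K$ and with $Z\in[P_i^{\sigma_j}(Y),Y_i]$ for a suitable $j$.

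Moving only along one direction $d_j$ lowers the distance by a definite fraction only for the component $s_j$ on the correct side. So I expect to need an iterated construction, because $\mathrm{rc}$-hulls may be nested. First absorb the component along one $d_j$, reaching a point whose remaining displacement lies in a 2-dimensional subspace. Then treat the next direction, and so on. Smoothness and the implicit function theorem, applied to the map of Step 2, control the quadratic errors, which are $O(r^2)\ll\theta r$ for $r\le r_0$ small.

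This step is where the linear independence of three directions is genuinely used, and where the sign issue and the choice of $\theta$ must be handled carefully. I would try first a degree argument on the sphere $\partial B_r(X_i)\cap\Sigma$, combining the three families, to avoid tracking signs by hand. Once the covering inclusion holds for all $i$, the sequence $U_k$ satisfies $U_k\subset U_{k+1}^{rc}$. This gives the in-approximation relative to $\Sigma$, and Theorem~\ref{th:constraint} then applies.
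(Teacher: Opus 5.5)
The paper quotes this theorem from \cite{szekelyhidi_forster18} without proof, so your argument has to stand on its own. It does not: the in-approximation you aim for cannot exist, so the covering inclusion of Step~3 is false for every $\theta<1$.

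First, every in-approximation satisfies $U_k\subset K^{rc}$. Take $g$ rank-one convex (hence continuous) with $g\le 0$ on $K$. Each point of $U_k$ lies in $C^{rc}$ for some compact $C\subset U_{k+1}$, so $\sup_{U_k}g\le\sup_{U_{k+1}}g\le\cdots$. Since $\sup_{U_m}\dist(\cdot,K)\to0$, we get $\limsup_m\sup_{U_m}g\le\max_K g\le0$. Because convex functions are rank-one convex, this gives $U_k\subset K^{rc}\subset\mathrm{conv}\,K$.

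Now use your own direction $d_1=P_i^{\sigma_1}-X_i$, where $P_i^{\sigma_1}=\sum_l\xi_lX_l$ with all $\xi_l>0$ by \eqref{T5e:Pki}. The points $Z_s:=X_i-s\,d_1$, $s>0$, lie on the rank-one line through $X_i$ and $P_i^{\sigma_1}$, which is entirely contained in $\Sigma$. They belong to $B_r(X_i)\cap\Sigma$ once $s|d_1|<r$. If $Z_s\in\mathrm{conv}\,K$, then $X_i=\tfrac{1}{1+s}Z_s+\tfrac{s}{1+s}P_i^{\sigma_1}$ would be a convex combination of the $X_l$ with positive weight on every $X_l$, $l\neq i$. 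That forces $X_i\in\mathrm{conv}(K\setminus\{X_i\})$. This is impossible whenever $X_i$ is a vertex of $\mathrm{conv}\,K$, for instance when the $X_l$ are affinely independent, as in Proposition~\ref{pr:criterion_rank3} and in the example of Proposition~\ref{T5p:ex}. So no $U_k$ can contain a relative ball around $X_i$. Equivalently, iterating your inclusion would put $B_{r_0}(X_i)\cap\Sigma$ inside $K^{rc}$, and no iteration or degree argument can rescue it.

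The obstruction is exactly the sign problem you noticed in Step~3, and it is real rather than technical. Lemma~\ref{l:TNrc} only places into $\{Y_1,\dots,Y_5\}^{rc}$ those points near $Y_i$ that lie on the segment from $Y_i$ towards $P_i^{\sigma_j}(Y)$. So a point $Z$ near $X_i$ can be generated by a vertex closer to $K$ only by stepping backwards from $Z$ roughly along $-d_j$, which decreases a \emph{positive} coordinate along $d_j$. Your negative-$t$ device in Step~2 puts the vertex beyond $Z$, farther from $X_i$, which is the opposite of the contraction you need.

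What the large $T_5$ hypothesis actually gives is that the one-sided cone $\{\sum_j t_jd_j:\ t_j>0\}$ is open in $T_{X_i}\Sigma$. Any in-approximation must therefore be one-sided near each $X_i$, and this is the role of the three independent directions in \cite{szekelyhidi_forster18}. Concretely, near each $X_i$ let $U_k$ be the image of a box, for example $t\in(\tfrac12\delta_k,\delta_k)^3$ with $\delta_{k+1}=\tfrac14\delta_k$. The box is mapped by a chart $t\mapsto\Psi_i(t)\in\Sigma$ with $\Psi_i(0)=X_i$ and $D\Psi_i(0)=(d_1,d_2,d_3)$, which is a local diffeomorphism by linear independence. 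Then prove $U_k\subset U_{k+1}^{rc}$ by peeling off the three coordinates one at a time. Each step goes backwards along a segment of a perturbed $T_5$ configuration from your Step~1, with the other four vertices chosen in $U_{k+1}$. The $O(\delta_k^2)$ drift in the remaining coordinates is harmless for small $\delta_k$. Your Steps~1 and~2 are the right ingredients, but they must be fed into such one-sided sets, not into balls.
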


In view of this result the proof of Proposition \ref{prop:largeT5ex} follows once we show 
that the set $\{X_1,\dots X_5\}$ in \eqref{T5e:ex} is a large $T_5$ set. This is the content of Proposition \ref{T5p:ex} below. 
We will use the following criterion to verify the large $T_5$ property.

\begin{proposition} \label{pr:criterion_rank3}  Assume that $\{X_1,\dots,X_5\}\subset\R^{2\times 2}$ is affine non-degenerate, meaning that the affine subspace of $\R^{2\times 2}$ spanned by these $5$ matrices is $4$-dimensional. Then the large $T_5$ property is 
equivalent to the condition that there exist $\sigma_1,\sigma_2,\sigma_3$ $T_5$-configurations and furthermore 
\begin{equation}\label{T5e:rank}
	\textrm{rank }B^{(k)}=3\textrm{ for all }k=1,\dots,5,
\end{equation}
where
$$
B^{(k)}_{ij}=\begin{cases} \lambda_j^{\sigma_i}& \sigma_i^{-1}(k)<\sigma_i^{-1}(j),\\
0&k=j,\\
\mu^{\sigma_i}\lambda_j^{\sigma_i}&	\sigma_i^{-1}(k)>\sigma_i^{-1}(j).\end{cases}
$$
\end{proposition}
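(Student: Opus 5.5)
The plan is to write the rank-one matrices $P_k^{\sigma}-X_k$ in coordinates with respect to a fixed basis adapted to $X_k$. Affine non-degeneracy then makes linear independence of these matrices equivalent to linear independence of their coefficient vectors, and those coefficient vectors are exactly the rows of $B^{(k)}$. The $T_5$ part of the two conditions is literally the same. So it only remains to show that, for each fixed $k$, linear independence of $\{P_k^{\sigma_j}-X_k : j=1,2,3\}$ is equivalent to $\rank B^{(k)}=3$.

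Fix $k$ and one of the permutations $\sigma=\sigma_j$. By Proposition~\ref{pr:T5_permutation}(ii) we have $P_k^\sigma=\sum_i \xi^{(\sigma,k)}_i X_i$ with $\sum_i\xi^{(\sigma,k)}_i=1$. Hence
$$P_k^\sigma - X_k=\sum_{i\neq k}\xi^{(\sigma,k)}_i (X_i-X_k)=\frac{1}{c^{\sigma}_k}\sum_{i\ne k} v^{(\sigma,k)}_i (X_i-X_k),$$
where $c^\sigma_k=\sum_i v^{(\sigma,k)}_i>0$ because all $\lambda^\sigma_i>0$ and $\mu^\sigma>1$.

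Next I compare $v^{(\sigma,k)}_i$ for $i\neq k$ with the entries of $B^{(k)}$ using \eqref{T5e:vki}. For $i\ne k$ the inequality $\sigma^{-1}(i)\ge\sigma^{-1}(k)$ is strict, so $v^{(\sigma,k)}_i=\lambda^\sigma_i$ when $\sigma^{-1}(k)<\sigma^{-1}(i)$ and $v^{(\sigma,k)}_i=\mu^\sigma\lambda^\sigma_i$ otherwise. This is precisely the $(j,i)$ entry of $B^{(k)}$. The $k$-th column of $B^{(k)}$ vanishes, so it does not affect the rank. Consequently, the $j$-th row of $B^{(k)}$, with the zero column deleted, is the coefficient vector of $c^{\sigma_j}_k (P_k^{\sigma_j}-X_k)$ in the family $(X_i-X_k)_{i\ne k}$.

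Affine non-degeneracy says exactly that the four matrices $X_i-X_k$, $i\neq k$, span a $4$-dimensional space, so they form a basis of $\R^{2\times 2}$. The linear map $\R^4\to\R^{2\times2}$ sending a coefficient vector to the corresponding combination is therefore an isomorphism. Multiplying each matrix by the positive scalar $c^{\sigma_j}_k$ does not change linear independence. Hence the three matrices $P_k^{\sigma_j}-X_k$ are linearly independent if and only if the three rows of $B^{(k)}$ are, that is, if and only if $\rank B^{(k)}=3$. Applying this for every $k=1,\dots,5$ gives the equivalence.

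The only delicate point is that "the associated rank-one matrices" should be well defined, i.e.\ that $\lambda^{\sigma}$ and $\mu^\sigma$ are determined by $\sigma$. By Lemma~\ref{T5l:mu} and Proposition~\ref{pr:T5_permutation}(iii), $\mu^\sigma>1$ is unique and $\ker A^{\sigma,\mu^\sigma}$ is one-dimensional. So $\lambda^\sigma$ is unique up to a positive factor, which cancels in $\xi^{(\sigma,k)}$ and only rescales rows of $B^{(k)}$. Hence neither the matrices $P^\sigma_k$ nor the rank of $B^{(k)}$ depend on the choice of $\lambda^\sigma$, and the equivalence is unambiguous.
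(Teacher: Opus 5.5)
Your proposal is correct and takes essentially the same route as the paper. You write $P_k^{\sigma_j}-X_k=\sum_{i\neq k}\xi^{(\sigma_j,k)}_i(X_i-X_k)$, use affine non-degeneracy to identify linear independence of these matrices with that of their coefficient vectors, and rescale each row by the positive constant $\sum_i v^{(\sigma_j,k)}_i$ to obtain $B^{(k)}$. Your extra remark that $\mu^\sigma$ and $\lambda^\sigma$ are well defined (via Lemma~\ref{T5l:mu}), so that the associated rank-one matrices are unambiguous, is a welcome detail that the paper leaves implicit.
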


\begin{proof} Fix $k$.    
Using the representation \eqref{T5e:Pki} we have
$$
P_k^\sigma-X_k=\sum_{j=1}^5  \xi^{(\sigma, k)}_i (X_j-X_k).$$
Since the  four matrices $X_j - X_k$ for $j \in \{ 1, \ldots,  5\} \setminus \{k\}$ are linearly independent,
the condition of linear independence is equivalent to the condition that the rank of the $3 \times 4$ matrix with entries
$\xi^{(\sigma_i, k)}_j$ is $3$. The assertion follows by  multiplying the $i$-th row of this matrix by $v^{\sigma_i,k} > 0$.
\end{proof}

\subsection{Proof of Proposition~\ref{prop:largeT5ex}}   \label{sec:largeT5ex}
Recall that we look at the matrices 
\begin{equation*}
\begin{split}
X_1&=\begin{pmatrix} c&0\\0&1/c\end{pmatrix},\, 
X_2=\begin{pmatrix} 1/c&0\\0&c\end{pmatrix},\\
X_3&=\begin{pmatrix} 0&-c\\1/c&0\end{pmatrix},\,
X_4=\begin{pmatrix} 0 &-1/c\\ c & 0\end{pmatrix},\,
X_5=\begin{pmatrix} 1&0\\0&1\end{pmatrix}.	
\end{split}
\end{equation*}
with $c > 1$. 

Proposition~\ref{prop:largeT5ex} follows from Theorem~\ref{th:large_T5} and the following result.

\begin{proposition}\label{T5p:ex}
Let  $a=(c-\tfrac1c)^2>0$,  $b=c+\tfrac1c-2>0$ and
\begin{equation*}
\sigma_1=[1\, 2\, 3\, 5\, 4],\quad 	\sigma_2=[1\, 2\, 4\, 5\, 3],\quad 	\sigma_3=[1\, 2\, 5\, 3\, 4].
\end{equation*}
If $ab>8$, then the set $\{X_1,\dots,X_5\}$ is a large $T_5$ set. More precisely, in this case the permutations $\sigma_j$, $j=1,2,3$ correspond
to $T_5$-configurations and the associated rank-one directions $\{P_i^{\sigma_j}-X_i:\,j=1,2,3\}$ are linearly independent for each $i=1,\dots,5$.
\end{proposition}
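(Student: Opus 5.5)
The plan is to verify the hypotheses of Proposition~\ref{pr:criterion_rank3} directly for the three permutations $\sigma_1,\sigma_2,\sigma_3$. First compute the symmetric matrix $A=A^{\sigma,1}$ of pairwise determinants $\det(X_i-X_j)$. Using $\det(X-Y)=\det X+\det Y-\cof X : Y$ and $\det X_i=1$, we get $\det(X_i-X_j)=2-\operatorname{tr}(X_i^{\mathrm{cof}}X_j)$. This gives
\begin{equation*}
A_{12}=-a,\quad A_{34}=-a,\quad A_{13}=A_{14}=A_{23}=A_{24}=2,\quad A_{15}=A_{25}=-b,\quad A_{35}=A_{45}=2 .
\end{equation*}
Here $A_{15}=2-(c+1/c)=-b$, and $A_{35}=2$ because the product has zero trace. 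All entries are nonzero when $c>1$, so the hypothesis $\det(X_i-X_j)\ne0$ of Proposition~\ref{pr:criterion_TN} holds. Affine non-degeneracy of $\{X_1,\dots,X_5\}$ is immediate: $X_2-X_1$ and $X_5-X_1$ are independent diagonal matrices, and $X_3-X_1$, $X_4-X_1$ contribute independent off-diagonal parts.

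Next, for each $\sigma_j$, I apply Proposition~\ref{pr:T5_permutation}(iii). I compute $\det A$ once, since it is independent of $\sigma$; it is a polynomial in $a,b$. I also compute the cyclic products $\alpha^{\sigma_j}$. For $\sigma_1=[1\,2\,3\,5\,4]$ this is $A_{12}A_{23}A_{35}A_{54}A_{41}=-8a$, and similarly $-8a$ for $\sigma_2$. For $\sigma_3$ it is $A_{12}A_{25}A_{53}A_{34}A_{41}=-a\cdot(-b)\cdot2\cdot(-a)\cdot2=-4a^2b$. I expect $\det A$ to factor so that $\beta^{\sigma_j}>0$ reduces exactly to $ab>8$. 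Then $\mu^{\sigma_j}>1$ is given by \eqref{T5e:mu*}, and $\ker A^{\sigma_j,\mu^{\sigma_j}}$ is one-dimensional by Lemma~\ref{T5l:mu}.

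The remaining task for the $T_5$ property is to show that the kernel vector $\lambda^{\sigma_j}$ can be taken with all entries positive. This is where the symmetry of the configuration should be exploited, following the $T_4$ computation in Lemma~\ref{lem:T4}. The relevant symmetries are the conjugation swapping $X_1\leftrightarrow X_2$ and $X_3\leftrightarrow X_4$ while fixing $X_5$, together with transposition-type maps preserving $\Sigma$ and the determinant pairing. With these I would reduce $A^{\sigma,\mu}\lambda=0$ to a small system, as in \eqref{eq:lambda_reduced}. I would try the ansatz $\lambda=(1,\mu,\ast,\ast,\ast)$, since positions $1,2$ are first in all three orderings. I would then solve for the other components as rational functions of $a,b,\mu$ and check their signs using $\mu>1$ and $ab>8$. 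This sign check, together with the factorization of $\det A$, is the main obstacle. If a clean formula does not emerge, I would use Cramer's rule (cofactors of a row of $A^{\sigma,\mu}$) and prove that each cofactor has a constant sign on the region $ab>8$, $c>1$.

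Finally, for each $k=1,\dots,5$, I form the $3\times5$ matrix $B^{(k)}$ of Proposition~\ref{pr:criterion_rank3} and show that it has rank $3$. Column $k$ is zero, so I need a nonzero $3\times3$ minor among the remaining four columns. The three permutations differ only in the relative order of $3,4,5$. Hence in the rows of $B^{(k)}$ the factor $\mu^{\sigma_i}$ appears in different patterns, and I would choose the minor on columns $\{3,4,5\}\setminus\{k\}$ plus one of $1,2$. I would argue that this minor is nonzero by factoring out the positive $\lambda$'s and reducing it to a determinant of a matrix with entries $1$ or $\mu^{\sigma_i}$. That determinant is a product of factors of the form $\mu^{\sigma_i}-1$ or $\mu^{\sigma_i}-\mu^{\sigma_l}$. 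Where two permutations give equal $\mu$ (as the equal $\alpha$'s for $\sigma_1,\sigma_2$ suggest), the nonvanishing must instead come from the differing $\lambda$-ratios, which I would check explicitly.
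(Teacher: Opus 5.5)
Your route is the paper's. You compute the matrix $A$ of pairwise determinants (your entries are correct) and get $\mu^{\sigma_j}>1$ from Proposition~\ref{pr:T5_permutation}(iii). You then find the kernel vectors by row reduction with the normalisation $\lambda=(1,\mu,\ast,\ast,\ast)$, and check \eqref{T5e:rank} through explicit $3\times3$ minors. Your verification of affine non-degeneracy is a useful addition that the paper leaves implicit. However, several specific claims would not survive the computation.

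\textbf{The value of $\alpha^{\sigma_1}$.} One has $\alpha^{\sigma_1}=A_{12}A_{23}A_{35}A_{54}A_{41}=(-a)\cdot 2^4=-16a$, and likewise $\alpha^{\sigma_2}=-16a$, not $-8a$. With $-8a$ you obtain a wrong $\mu^{\sigma_1}$, at which $A^{\sigma_1,\mu}$ is invertible.

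\textbf{Where $ab>8$ enters.} It does not enter through $\beta^{\sigma_j}>0$. One has $\det A=2a^2(ab^2+4a-16b)>0$ for every $c>1$, since $a=b(c^{1/2}+c^{-1/2})^2>4b$. So all three $\beta^{\sigma_j}$ are positive unconditionally. The hypothesis is needed for the signs of the kernel vectors. For example,
$\lambda^{(1)}=\bigl(1,\eta,\tfrac2a((\tfrac{ab}4-1)\eta+1),\tfrac{2\eta}a(\tfrac{ab}4-1+\eta),(\tfrac{ab}4-2)\eta\bigr)$
has positive last entry if and only if $ab>8$. Moreover $\beta^{\sigma_1}/\beta^{\sigma_3}=\alpha^{\sigma_3}/\alpha^{\sigma_1}=ab/4>1$. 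This gives the strict inequality $\eta:=\mu^{\sigma_1}=\mu^{\sigma_2}>\nu:=\mu^{\sigma_3}$, which your rank step tacitly needs but which you never derive.

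\textbf{The rank step.} The argument as you sketch it is not available. The entries $\lambda^{\sigma_i}_j$ of $B^{(k)}$ depend on the row index $i$, so they cannot be factored out of columns. Only columns $1,2$ reduce to powers of $\mu^{\sigma_i}$, because of the normalisation. Also, your prescription ``$\{3,4,5\}\setminus\{k\}$ plus one of $1,2$'' yields four columns when $k\in\{1,2\}$.

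What makes the minors tractable is that $\sigma_2=(3\,4)\circ\sigma_1$ and $A$ is invariant under $3\leftrightarrow 4$. Hence $A^{\sigma_2,\eta}$ is $A^{\sigma_1,\eta}$ with rows and columns $3,4$ interchanged, so $\lambda^{(2)}$ is $\lambda^{(1)}$ with entries $3,4$ interchanged, with the same $\eta$. Subtracting the first two rows of a suitable minor then leaves a single entry. This gives, for example,
$\det B^{(1)}_{[245]}=-\tfrac2a(\eta^2-1)\eta(\nu-1)$ and
$\det B^{(3)}_{[124]}=-\tfrac2a\eta^2\nu(\tfrac{ab}4-2)(\eta-1)(\nu-\eta)$.
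These are nonzero because $\eta>\nu>1$ and $ab>8$. So you need the explicit kernel vectors, not just their positivity, before the rank condition can be verified.
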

Elementary calculations show that $ab>8$ holds for instance if $c\geq 3$.

\begin{proof}[Proof of Proposition \ref{T5p:ex}]
Let $X^\sigma = (X_{\sigma(1)}, \ldots, X_{\sigma(5)})$. 
We first show that $X^{\sigma_1}$, $X^{\sigma_2}$ and $X^{\sigma_3}$ are
$T_5$ configurations by using the criterion in Proposition~\ref{pr:T5_permutation}.

To compute $A = A^{\sigma, 1}$ we observe that 
 $\det X_i=1$ for all $i$. 
  Thus
\begin{equation*}
A=\begin{pmatrix}
	0&-a&2&2&-b\\ -a&0&2&2&-b\\2&2&0&-a&2\\2&2&-a&0&2\\-b&-b&2&2&0
\end{pmatrix}	
\end{equation*}
with $a=(c-\tfrac1c)^2>0$, and $b=c+\tfrac1c-2>0$. 
A direct calculation gives
$$
\det A=2a^2(ab^2+4a-16b).
$$
Since $a=b(c^{\sfrac12}+c^{-\sfrac12})^2> 4b$, we see that $\det A>0$. 
Moreover, using the notation from \eqref{e:permuteddeterminants},
\begin{eqnarray*} \alpha^{\sigma_1} &=& A_{12} A_{23} A_{35} A_{54} A_{41} = - 16 a, \\
 \alpha^{\sigma_2} &=& A_{12} A_{24} A_{45} A_{53} A_{31} = - 16 a, \\
  \alpha^{\sigma_3} &=& A_{12} A_{25} A_{53} A_{34} A_{41} = - 4 a^2 b.
  \end{eqnarray*}
Hence $\beta^{\sigma_i}>0$ and therefore the $\mu^{(i)}$ defined by formula \eqref{T5e:mu*}  with $\beta$ replaced by
$\beta^{\sigma_i}$ satisfy $\mu^{(i)}>1$. 
Note also that, since $ab>8$, $\beta^{\sigma_1}=\beta^{\sigma_2}>\beta^{\sigma_3}$ 
and consequently $\mu^{(1)}=\mu^{(2)}>\mu^{(3)}$. In  the following, let us denote
$$
\eta=\mu^{(1)}=\mu^{(2)},\quad \nu=\mu^{(3)}.
$$

It remains to check that the kernels of the matrices $A^{\sigma_i,\mu^{(i)}}$ intersect the 1st octant $(0,\infty)^5$.
 Let us first consider the permutation $\sigma_1=[1 2 3 5 4]$. Then 
$$
A^{\sigma_1,\eta}=\begin{pmatrix}
	0&-a&2&2&-b\\ -a\eta &0&2&2&-b \\ 2\eta &2\eta &0&-a&2\\ 2\eta &2\eta &-a\eta &0&2\eta\\-b\eta &-b\eta &2\eta &2 &0
\end{pmatrix}.	
$$ 
Subtracting the first row from the second and subsequently subtracting appropriate multiples of the second row from the others, we obtain
$$
\tilde A^{\sigma_1,\eta}=\begin{pmatrix}
	0&-a&2&2&-b\\ 
	\eta &-1&0&0&0\\ 
	2\eta(1+\eta) &0&0&-a&2\\ 
	2\eta(1+\eta) &0&-a\eta &0&2\eta\\
	-b\eta(1+\eta) &0 &2\eta &2 &0
\end{pmatrix},
$$ 
and a further row reduction results in the matrix
$$
  \tilde{ \tilde A}^{\sigma_1, \eta}  = \begin{pmatrix}
	0&-a&2&2&-b\\ 
	\eta &-1&0&0&0\\ 
	\lambda^{(1)}_3 &0&-1&0&0\\ 
	\lambda^{(1)}_4 &0&0&-1&0\\
	\lambda^{(1)}_5 &0 &0&0&-1
\end{pmatrix},
$$ 
where $\lambda^{(1)}\in \R^5$ is given by
$$
\lambda^{(1)}=\begin{pmatrix}1&\eta &\tfrac{2}{a}\left((\tfrac{ab}{4}-1)\eta+1\right)&\tfrac{2\eta}{a}\left(\tfrac{ab}{4}-1+\eta\right)&\left(\frac{ab}{4}-2\right)\eta\end{pmatrix}^T
$$ 
In particular, $( \tilde {\tilde A}^{\sigma_1,\eta}\lambda^{(1)})_i=0$ for $i=2,\dots,5$.

Clearly the last four rows of $\tilde {\tilde A}^{\sigma_1,\eta}$
are linearly independent. Since $\det \tilde {\tilde A}^{\sigma_1,\eta} = 0$
it follows that the first row of $\tilde {\tilde A}^{\sigma_1,\eta}$ is a linear combination
of the last four rows. Thus $\tilde {\tilde A}^{\sigma_1,\eta} \lambda^{(1)} = 0$
and hence $A^{\sigma_1, \eta} \lambda^{(1)} = 0$. Since $\rank A^{\sigma_1, \eta} = 4$ it follows that $\lambda^{(1)}$ is the vector which generates the kernel of 
$ A^{\sigma_1, \eta} $. 
Since $a,b>0$, $\eta>1$ and $ab>8$, we see $\lambda^{(1)}_i>0$ for all $i$, so that $\sigma_1$ indeed corresponds to a $T_5$-configuration.

\smallskip

Concerning the case $\sigma_2=[1 2 4 5 3]$ we note that
$$A^{\sigma_2,\eta}=\begin{pmatrix}
	0&-a&2&2&-b\\ -a\eta &0&2&2&-b \\ 2\eta &2\eta &0&-a \eta &2 \eta \\ 2\eta &2\eta &-a &0&2\\-b\eta &-b\eta &2 &2 \eta &0
\end{pmatrix}.
$$
Comparing this  expression with 
$$ A^{\sigma_1,\eta}=\begin{pmatrix}
	0&-a&2&2&-b\\ -a\eta &0&2&2&-b \\ 2\eta &2\eta &0&-a&2\\ 2\eta &2\eta &-a\eta &0&2\eta\\-b\eta &-b\eta &2\eta &2 &0
\end{pmatrix}.	
$$
we see that $A^{\sigma_2, \eta}$ is obtained from $A^{\sigma_1, \eta}$ for swapping 
the 3rd and 4th row and the 3rd and 4th column.

  Hence
$$
\lambda^{(2)}=\begin{pmatrix}1&\eta &\tfrac{2\eta}{a}\left(\tfrac{ab}{4}-1+\eta\right)&\tfrac{2}{a}\left((\tfrac{ab}{4}-1)\eta+1\right)&\left(\frac{ab}{4}-2\right)\eta\end{pmatrix}^T
$$
is the vector generating the $1$-dimensional kernel of $A^{\sigma_2,\eta}$, and, as above, we see that $\lambda^{(2)}_i>0$ for all $i$ under the conditions of the proposition.  

\smallskip

Finally, let us look at $\sigma_3=[1 2 5 3 4]$. Here
$$
A^{\sigma_3,\nu}=\begin{pmatrix}
	0&-a&2&2&-b\\ -a\nu &0&2&2&-b \\ 2\nu &2\nu &0&-a&2\nu\\ 2\nu &2\nu &-a\nu &0&2\nu\\-b\nu &-b\nu &2 &2 &0
\end{pmatrix}.	
$$ 
Proceeding with row-reduction as above, we obtain first
$$
\tilde A^{\sigma_3,\nu}=\begin{pmatrix}
	0&-a&2&2&-b\\ 
	\nu &-1&0&0&0\\ 
	2\nu(1+\nu) &0&0&-a&2\nu\\ 
	2\nu(1+\nu) &0&-a\nu &0&2\nu\\
	-b\nu(1+\nu) &0 &2 &2 &0
\end{pmatrix},
$$ 
and a further row reduction results in the matrix
$$
  \tilde{\tilde A}^{\sigma_3, \nu} 
=
\begin{pmatrix}   	0&-a&2&2&-b\\ 
	\nu &-1&0&0&0\\ 
	\lambda^{(3)}_3 &0&-1&0&0\\ 
	\lambda^{(3)}_4 &0&0&-1&0\\
	\lambda^{(3)}_5 &0 &0&0&-1
\end{pmatrix},
$$ 
where
$$
\lambda^{(3)}=\begin{pmatrix}1&\nu &\tfrac{b}{2}\nu &\tfrac{b}{2}\nu^2&\left(\frac{ab}{4}-1\right)\nu-1\end{pmatrix}^T.
$$

Arguing as before, we deduce that $A^{\sigma_3,\nu}\lambda^{(3)}=0$, and furthermore $\lambda^{(3)}_i>0$ for all $i$ since $a,b>0$, $\nu>1$ and $ab>8$. Therefore $\sigma_3$ also corresponds to a $T_5$ configuration.

\bigskip

In view of Proposition~\ref{pr:criterion_rank3} it only
  remains to check the rank condition in \eqref{T5e:rank}. To this end it suffices to show that for each of the $3\times 5$ matrices $B^{(k)}$ there exists a non-vanishing $3\times 3$ subdeterminant. A judicious choice of the relevant columns in each $B^{(k)}$ leads one to look at 
$$
B^{(1)}_{[2 4 5]},\,B^{(2)}_{[1 4 5]},\,
 B^{(3)}_{[124]},\,
B^{(4)}_{[1 2 3]},\,B^{(5)}_{[1 2 3]},
$$
where $B^{(k)}_{[l m n]}$ denotes the $3\times 3$ matrix formed by restricting $B^{(k)}$ to columns $[l m n ]$. 
Elementary calculations lead to
\begin{align*}
\det B^{(1)}_{[2 4 5]} 
&
 = \det \begin{pmatrix} \lambda^{(1)}_2   &  \lambda^{(1)}_4 & \lambda^{(1)}_5 \\
  \lambda^{(2)}_2 &  \lambda^{(2)}_4 & \lambda^{(2)}_5\\
   \lambda^{(3)}_2   &  \lambda^{(3)}_4 & \lambda^{(3)}_5
\end{pmatrix}
\\
 &= \det \begin{pmatrix}
	\eta &\tfrac{2\eta}{a}\left(\tfrac{ab}{4}-1+\eta\right)&\left(\frac{ab}{4}-2\right)\eta\\
    \eta &\tfrac{2}{a}\left((\tfrac{ab}{4}-1)\eta+1\right)&\left(\frac{ab}{4}-2\right)\eta\\
    \nu &\tfrac{b}{2}\nu^2&\left(\frac{ab}{4}-1\right)\nu-1
\end{pmatrix}\\
&=\det\begin{pmatrix}
	\eta &\tfrac{2\eta}{a}\left(\tfrac{ab}{4}-1+\eta\right)&\left(\frac{ab}{4}-2\right)\eta\\
    0 &\tfrac{2}{a}\left(1-\eta^2\right)&0\\
    \nu &\tfrac{b}{2}\nu^2&\left(\frac{ab}{4}-1\right)\nu-1
\end{pmatrix}\\
&=-\tfrac{2}{a}(\eta^2-1)\eta(\nu-1),\\
  B^{(2)}_{[1 4 5]} 
 & =
 \begin{pmatrix} \eta \lambda^{(1)}_1 &  \lambda^{(1)}_4 & \lambda^{(1)}_5 \\
\eta  \lambda^{(2)}_1 &  \lambda^{(2)}_4 & \lambda^{(2)}_5\\
 \nu   \lambda^{(3)}_1 &   \lambda^{(3)}_4 & \lambda^{(3)}_5
\end{pmatrix}  
=  B^{(1)}_{[245]},  \quad  \text{and thus}  \\
\det   B^{(2)}_{[1 4 5]} & = \det  B^{(1)}_{[245]}.
 \end{align*}

 Moreover, 
 \begin{align*}
\det B^{(3)}_{[124]} &=
\det  \begin{pmatrix} \eta \lambda^{(1)}_1  & \eta   \lambda^{(1)}_2 &   \lambda^{(1)}_4 &  \\
\eta  \lambda^{(2)}_1 & \eta \lambda^{(2)}_2 &   \eta  \lambda^{(2)}_4 &  \\
 \nu   \lambda^{(3)}_1 & \nu \lambda^{(3)}_2 &   \lambda^{(3)}_4 
\end{pmatrix}
  \\
&= 
\det\begin{pmatrix}
	\eta &\eta^2&\tfrac{2\eta}{a}\left(\tfrac{ab}{4}-1+\eta\right)\\
    \eta &\eta^2&\tfrac{2\eta}{a}\left((\tfrac{ab}{4}-1)\eta+1\right)\\
    \nu &\nu^2&   \tfrac{b}{2}\nu^2
\end{pmatrix}  \\
&= \eta^2 \nu
 \det\begin{pmatrix} 
	1 &\eta&\tfrac{2}{a}\left(\tfrac{ab}{4}-1+\eta\right)\\
    1 &\eta&\tfrac{2}{a}\left((\tfrac{ab}{4}-1)\eta+1\right)\\
    1 &\nu&  \tfrac{b}{2}\nu
\end{pmatrix}
\\
&=
\eta^2 \nu
 \det\begin{pmatrix} 
	1 &\eta&\tfrac{2}{a}\left(\tfrac{ab}{4}-1+\eta\right)\\
    0 & 0 &\tfrac{2}{a}     \left((\tfrac{ab}{4}-1)\eta+1     - \tfrac{ab}{4} + 1 - \eta) \right)\\
    1 &\nu&  \tfrac{b}{2}\nu
\end{pmatrix}
\\
&=-\frac{2}{a}\eta^2\nu(\tfrac{ab}{4}-2)(\eta-1)(\nu-\eta)
\end{align*}

This calculation shows in particular that the determinant of three times three matrix  $B^{(3)}_{[124]}$
does not depend on the $33$ entries of the matrix. Using this fact we obtain in the same way
\begin{align*}
\det B^{(4)}_{[1 2 3]}
&=
\det \begin{pmatrix} \eta \lambda^{(1)}_1  & \eta   \lambda^{(1)}_2 &  \eta  \lambda^{(1)}_3 \\
\eta  \lambda^{(2)}_1 & \eta \lambda^{(2)}_2 &    \lambda^{(2)}_3 \\
 \nu   \lambda^{(3)}_1 & \nu \lambda^{(3)}_2 &   \nu \lambda^{(3)}_3 \end{pmatrix}
\\
&=\det\begin{pmatrix}
	\eta &\eta^2&\tfrac{2\eta}{a}\left((\tfrac{ab}{4}-1)\eta+1\right)\\
    \eta &\eta^2&\tfrac{2\eta}{a}\left(\tfrac{ab}{4}-1+\eta\right)\\
    \nu &\nu^2&   \tfrac{b}{2}\nu^2
\end{pmatrix}\\
&=    -\det B^{(3)}_{[124]}  \\%
\det B^{(5)}_{[1 2 3]}  &=
\det  \begin{pmatrix} \eta \lambda^{(1)}_1  & \eta   \lambda^{(1)}_2 &  \eta  \lambda^{(1)}_3  \\
\eta  \lambda^{(2)}_1 & \eta \lambda^{(2)}_2 &    \lambda^{(2)}_3 \\
 \nu   \lambda^{(3)}_1 & \nu \lambda^{(3)}_2 &    \lambda^{(3)}_3 
\end{pmatrix}
\\
&=\det\begin{pmatrix}
	\eta &\eta^2&\tfrac{2\eta}{a}\left((\tfrac{ab}{4}-1)\eta+1\right)\\
    \eta &\eta^2&\tfrac{2\eta}{a}\left(\tfrac{ab}{4}-1+\eta\right)\\
    \nu &\nu^2&     \tfrac{b}{2}\nu
\end{pmatrix}\\
&= \det B^{(3)}_{[124]}. \\
\end{align*}
Since we already know that $\eta>\nu$, $\eta,\nu>1$ and $ab>8$ by assumption, we deduce that none of the $5$ determinants above vanishes, thus showing that the rank condition \eqref{T5e:rank} is satisfied. This concludes the proof.
\end{proof}

\appendix

\section{Proof of Corollary~\ref{cor_heisenberg_example}}~
\label{sec_proof_heisenberg_corollary}

We first recall some notation. The Heisenberg group is 
$$
\H:=\left\{\left[\begin{matrix}1&x_1&x_3\\0&1&x_2\\0&0&1\end{matrix}\right]\mid x_i\in\R\right\}\,.
$$
We let 
$$
X_1:=\left[\begin{matrix}0&1&0\\0&0&0\\0&0&0\end{matrix}\right]\,,\quad X_2:=\left[\begin{matrix}0&0&0\\0&0&1\\0&0&0\end{matrix}\right]\,\quad X_3:=\left[\begin{matrix}0&0&1\\0&0&0\\0&0&0\end{matrix}\right]
$$
be the standard basis for the Lie algebra $\fh$, so $[X_1,X_2]=X_3$ and $[X_1,X_3]=[X_2,X_3]=0$.  We let $\th_1,\th_2,\th_3$ be the dual basis, so $d\th_3=-\th_1\we\th_2$. 
Let $Z(\H):=\exp\R X_3$ be the center of $\H$, and $\pi:\H\ra \H/[\H,\H]$ be the abelianization homomorphism.  
We will identify $\R^3$ with $\H$ by 
$$
(x_1,x_2,x_3)\leftrightarrow \left[\begin{matrix}1&x_1&x_3\\0&1&x_2\\0&0&1\end{matrix}\right]
$$
and the abelianization $\H/[\H,\H]$ with $\R^2$ by $[(x_1,x_2,x_3)]\leftrightarrow (x_1,x_2)\in \R^2$; with these identifications the abelianization homomorphism becomes the projection $\pi(x_1,x_2,x_3)=(x_1,x_2)$.  

We note that in this representation for the Heisenberg group the group action is 
explicitly given by $x \ast y = (x_1+ y_1, x_2 + y_2, x_3 + y_3 + x_1 y_2)$ and the corresponding left-invariant vectorfields and dual forms are given by
\begin{align*}
X_1(x) &= \frac{\partial}{\partial x_1},  \quad  X_2(x) = \frac{\partial}{\partial x_2} + x_1  \frac{\partial}{\partial x_3}, \quad 
 X_3(x) = \frac{\partial}{\partial x_3} \\
\theta_1 &= dx_1, \quad \theta_2 = dx_2, \quad \theta_3 = dx_3 - x_1 dx_2.
\end{align*}

\begin{lemma}
\label{lem_lifting}
Let $f:\R^2\ra\R^2$ be a Lipschitz mapping such that $\det \g f(x)=1$ for a.e. $x\in \R^2$.  Then there exists a mapping $\hat f:\H\ra\H$ such that:
\ben
\item $\hat f$ is a lift of $f$, i.e. $\pi\circ\hat f=f\circ \pi$.
\item $\hat f$ is locally Euclidean Lipschitz, i.e. it defines a locally Lipschitz mapping $(\R^3,d_{\R^3})\ra(\R^3,d_{\R^3})$ under the identification $\R^3\simeq \H$ above. 
\item $\hat f$ preserves the $1$-form $\th_3$, i.e. $\hat f^*\th_3=\th_3$.  
\een
Moreover:
\ben
\setcounter{enumi}{3}
\item  Any mapping $\hat f'$ satisfying (1)-(3) commutes with the action $\H\curvearrowleft Z(\H)$, i.e. $r_g\circ \hat f'=\hat f'\circ r_g$ for every  $g\in Z(\H)$.
\item There is a unique mapping satisfying (1)-(3) up to composition with translation by elements of $Z(\H)$.
\item $\hat f:(\H,d_{CC})\ra (\H,d_{CC})$ is Lipschitz.
\item For a.e. $x\in \R^2$, the map $\hat f$ is differentiable and Pansu differentiable at every point $\hat x\in \pi^{-1}(x)$; moreover the Pansu differential of $\hat f$ is a lift of the differential of $f$, i.e.   $\pi\circ D_P\hat{f}(\hat x)=Df\circ \pi(\hat{x})$.  
\een
\end{lemma}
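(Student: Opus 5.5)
The plan is to build $\hat f$ explicitly in the form $\hat f(x_1,x_2,x_3)=(f(x_1,x_2),\,x_3+u(x_1,x_2))$ for a suitable function $u:\R^2\to\R$; property (1) then holds automatically. Since $\hat f^*\theta_1=df_1$ and $\hat f^*\theta_2=df_2$, we compute
\[
\hat f^*\theta_3 = dx_3 + du - f_1\,df_2 ,
\]
while $\theta_3=dx_3-x_1dx_2$. Hence (3) is equivalent to $du = f_1\,df_2 - x_1\,dx_2=:\gamma$ on $\R^2$. The form $\gamma$ has $L^\infty_{\loc}$ coefficients. By Lemma~\ref{le:pullback_closed} (applied with $k=1$ and $\alpha=y_1dy_2$, $f\in W^{1,2}_{\loc}$), its weak exterior derivative is $df_1\wedge df_2-dx_1\wedge dx_2=(\det\g f-1)\,dx_1\wedge dx_2=0$. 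So $\gamma$ is weakly closed on the simply connected domain $\R^2$. Mollifying $\gamma$ and using the Poincar\'e lemma, we obtain $u\in W^{1,\infty}_{\loc}$ with $du=\gamma$, unique up to an additive constant. This gives (2), since $f$ and $u$ are locally Lipschitz.

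Properties (4) and (5) follow from the same computation. Suppose $\hat f'$ satisfies (1)--(3). By (1), $\hat f'=(f\circ\pi,\,w)$ for some locally Lipschitz $w$. Then (3) reads $dw = dx_3 + f_1\,df_2 - x_1dx_2$ a.e. in $\R^3$, so $\partial_3 w=1$ and $w - x_3$ is independent of $x_3$; Lipschitz functions with vanishing weak derivative in $x_3$ are constant along $x_3$-lines. Thus $w=x_3+u'(x_1,x_2)$ with $du'=\gamma$. This shows that $\hat f'$ commutes with right translation by $(0,0,t)$, which is simply $x_3\mapsto x_3+t$. It also shows $u'-u$ is constant, and that constant is exactly a central translation.

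For (6) and (7), the key observation is that a locally Euclidean Lipschitz map preserving the contact form $\theta_3$ maps horizontal curves to horizontal curves. I would work along a.e. horizontal line in each direction (Fubini), where $\hat f$ is absolutely continuous, and show that the image curve satisfies $\theta_3(\dot c)=0$ with horizontal speed $|\g f\,\dot{\pi c}|\le \|\g f\|_\infty|\dot{\pi c}|$. That the exceptional set of lines is negligible is the delicate point here. Since the CC length of a horizontal curve equals the Euclidean length of its projection, lengths are distorted by at most a factor $\mathrm{Lip}(f)$. To pass from a.e. lines to all pairs of points, I would use continuity of $\hat f$ together with approximation of geodesics by nearby horizontal curves in generic position, via left translations. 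The invariance in (4) then upgrades local estimates to global ones, because the map commutes with the central action and $f$ is globally Lipschitz.

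For (7), take $x$ a point of differentiability of $f$ and a Lebesgue point for $\g f$. Because $\hat f(\hat x\,\delta_t y)$ has the explicit form above, the Pansu blow-up can be computed directly. Its horizontal part is $Df(x)$, and its vertical part is forced by contact preservation, giving the graded homomorphism lifting $Df(x)$. Since $u$ is $C^{1}$-like only a.e., the estimate on the vertical error $u(x+h)-u(x)-\int f_1\,df_2$ along the short path requires care. I expect this step, and the a.e.-to-everywhere passage in (6), to be the main obstacle. I would handle it using the Stokes identity $u(q)-u(p)=\int_{[p,q]}\gamma$ for a.e. segments, together with the fact that $\det\g f=1$ makes the enclosed-area term match.
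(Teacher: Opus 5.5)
For (1)--(6) your plan is the paper's proof. The paper writes $\hat f=\hat h\circ S_u$ with $\hat h(x)=(f(x_1,x_2),x_3)$, $S_u$ the vertical shear, and $du=-\alpha$ where $\hat h^*\theta_3=\theta_3+\pi^*\alpha$. This is exactly your map $(f\circ\pi,\,x_3+u)$ with $du=f_1\,df_2-x_1\,dx_2$, and closedness again comes from $\det\g f=1$. Items (4) and (5) are proved from the fibrewise form $(f,\,x_3+w)$, just as you do. Item (6) uses the same Fubini-over-left-translates argument: at a.e.\ point $D\hat f$ preserves $V_1$ and restricts there to $Df$. Hence for a fixed horizontal curve $c$ and a.e.\ $g$, the curve $\hat f\circ\ell_g\circ c$ is horizontal of length at most $L\,\length(c)$, and one lets $g_k\to e$. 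No local-to-global step via (4) is needed.

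You diverge at (7), and there your proposal is not yet a proof. The paper does not compute any blow-up. Once (6) holds, Pansu's differentiability theorem gives Pansu differentiability of $\hat f$ at a.e.\ point of $\H$. By (4), central elements commute with $\hat f$ and cancel in $\delta_{1/t}\bigl(\hat f(\hat x)^{-1}\hat f(\hat x\,\delta_t y)\bigr)$, so differentiability at one point of a fibre propagates to the whole fibre. Finally, $\pi\circ D_P\hat f=Df\circ\pi$ is the chain rule applied to $\pi\circ\hat f=f\circ\pi$.

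Your direct route is viable and gives slightly more: Pansu differentiability over every $x$ where $f$ is differentiable with $\det Df(x)=1$, without Pansu's theorem. But the fix you name does not address the real difficulty. At a fixed $\hat x$ over $x$, the third coordinate of the rescaled quotient is $y_3+F_t(h)$, with $h=(y_1,y_2)$ and
$F_t(h)=t^{-2}\bigl[u(x+th)-u(x)+tx_1h_2-f_1(x)\bigl(f_2(x+th)-f_2(x)\bigr)\bigr]$.
Controlling this needs $u(x+th)-u(x)=\int_{[x,x+th]}\gamma$ for segments issuing from that particular $x$, locally uniformly in $h$. Those segments form a null family, so a statement about a.e.\ segments does not by itself control them.

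What works is the following. Fubini in polar coordinates centred at $x$ shows that, for a.e.\ direction, a.e.\ point of the ray from $x$ lies in the full-measure set where $f,u$ are differentiable and $du=\gamma$. Along such rays the chain rule gives
\[
F_t(h)=\frac{1}{t^2}\int_0^1\bigl(f_1(x+sth)-f_1(x)\bigr)\,\frac{d}{ds}f_2(x+sth)\,ds-\tfrac12 h_1h_2 .
\]
Replacing $f_1(x+sth)-f_1(x)$ by $st(Ah)_1$, with $A=Df(x)$, costs $o(1)$ uniformly for $|h|\le R$. Integrating by parts in $s$ then gives $F_t(h)\to\tfrac12\bigl((Ah)_1(Ah)_2-h_1h_2\bigr)$ uniformly over the good $h$. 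Since $F_t$ is continuous in $h$ and the good $h$ are dense, the convergence holds locally uniformly for all $h$. For $\det A=1$ the limit $(h,y_3)\mapsto\bigl(Ah,\,y_3+\tfrac12((Ah)_1(Ah)_2-h_1h_2)\bigr)$ is the graded homomorphism lifting $A$.

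Lebesgue points of $\g f$ and any further enclosed-area comparison are not needed. Either supply this argument, or simply invoke Pansu's theorem after (6) as the paper does.
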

\begin{proof}

Let $h:\R^2\ra\R^2$ be a smooth map.  Define $\hat h:\H\ra\H$ by
\begin{equation}
\label{eqn_hath_definition}
\hat h(x_1,x_2,x_3):=(h(x_1,x_2),x_3)\,.
\end{equation}
  A calculation gives
\begin{equation}
\label{eqn_hhat_th_3}
\hat h^*\th_3=\th_3+\pi^*\al
\end{equation}
for some smooth $1$-form $\al\in \Om^1(\R^2)$.

For $u:\R^2\ra\R$ smooth we let $S_u:\H\ra\H$ be the vertical shear given by $S_u(x_1,x_2,x_3)=(x_1,x_2,x_3+u(x_1,x_2))$, so $\pi\circ S_u=\pi$, and one gets 
\begin{equation}
\label{eqn_su_th3}
S_u^*\th_3=\th_3+\pi^*du\,.
\end{equation}
Precomposing our initial lift $\hat h$ with the shear $S_u$, we let 
\begin{equation}
\label{eqn_hath1_definition}
\hat h_1:=\hat h\circ S_u\,.
\end{equation}  
Now
\begin{equation}
\label{eqn_h1hat_th3}
\begin{aligned}
\hat h_1^*\th_3&=S_u^*(\hat h^*\th_3)\\
&=\th_3+\pi^*du+S_u^*(\pi^*\al)\\
&=\th_3+\pi^*(du+\al)\,.
\end{aligned}
\end{equation}

Now suppose $h$ is area-preserving, i.e. $h^*(dx_1\we dx_2)=dx_1\we dx_2$.  Then 
\begin{equation}
\label{eqn_dhhat_th3}
\begin{aligned}
d(\hat h^*\th_3)&=\hat h^*(d\th_3)\\
&=-\hat h^*\pi^*(dx_1\we dx_2)\\
&=-(h\circ\pi)^*(dx_1\we dx_2)\\
&=-\pi^*(dx_1\we dx_2)\\
&=d\th_3\,.
\end{aligned}
\end{equation}
Combining \eqref{eqn_hhat_th_3} and \eqref{eqn_dhhat_th3} we get $d\al=0$.  Thus we may choose $u$ such that $du=-\al$, and then \eqref{eqn_h1hat_th3} gives $\hat h_1^*\th_3=\th_3$.  

Taking $h=f$, $\hat f:=\hat{h}_1$ gives assertions (1)-(3) of the lemma when $f$ is smooth.  When $f$ is only Lipschitz the same argument applies, with the caveat that the mappings are locally Euclidean Lipschitz,  the exterior derivative should be interpreted as the distributional exterior derivative, and one has to use the fact that $f^*d\be=df^*\be$ when $f$ is locally Euclidean Lipschitz and both $\be$ and $d\be$ are $L^\infty_{\loc}$.

{\bf(4).}  Suppose $\hat f':\H\ra\H$ satisfies (1)-(3). By (1), for every $(x_1,x_2)\in\R^2$, the map $\hat f'$ takes the fiber $\pi^{-1}((x_1,x_2))$ to the fiber $\pi^{-1}(f(x_1,x_2))$ and, in view of (3), $\hat f'(x_1,x_2,x_3)=(f(x_1,x_2),x_3+w(x_1,x_2))$ for some function $w:\R^2\to\R$. Now, given $g\in Z(\H), x\in \H$ there exists $c\in \R$ such that $g*x=(x_1,x_2,x_3+c)$. Then $g*\hat f(x)=(f(x_1,x_2),x_3+w(x_1,x_2)+c)=\hat f(g*x)$, which gives (4). 

{\bf(5).}  Suppose $\hat f':\H\ra\H$ satisfies (1)-(3).  By (4) we have 
\begin{align*}
\hat f(x_1,x_2,x_3)=(f(x_1,x_2),x_3+w(x_1,x_2))\,,\\
\hat f'(x_1,x_2,x_3)=(f(x_1,x_2),x_3+w'(x_1,x_2))
\end{align*}
for some functions $w,w':\R^2\ra\R$; hence $\hat f'=\hat f\circ  S_u$ for $u=w'-w$.
Note that $u$ must be Lipschitz by (2), so by \eqref{eqn_su_th3} and (3) we have $du=0$ and  therefore $\hat h'(x)= g*\hat h(x)$ for some $g\in Z(\H)$.

{\bf (6) and (7).}  In view of the construction of $\hat f$ (see \eqref{eqn_hath_definition}, \eqref{eqn_hath1_definition}) for a.e. $x\in \R^2$ we get that $\hat f$ is differentiable at every $\hat x\in \pi^{-1}(x)$.  By (1) and (3)  the differential preserves $V_2$ and $V_1$ respectively; moreover the restriction $D\hat f(\hat x)\restr_{V_1}:V_1\ra V_1$ agrees with $Df(x)$ modulo our identification $V_1\simeq\R^2$, and has operator norm $\leq L$, if $f$ is $L$-Lipschitz.  

Let $W_1\subset\H$ be the full measure subset   
$$
\{ \hat{x}\in  \H\mid D\hat{f}(\hat{x})\;\text{is defined and}\;\|D\hat{f}(\hat{x})\restr_{V_1}\|\leq L\}\,.
$$ 
Let $\ga:[0,1]\ra \H$ be a Lipschitz curve, and $\ell_g:\H\ra\H$ denote left translation by $g\in\H$.  It follows from Fubini's theorem that for a full measure subset $W_2\subset \H$, if $g\in  W_2$, then  $\ell_g\circ\ga(t)\in W_1$ for a.e. $t\in [0,1]$; in particular $\ell_g\circ \ga$ is a horizontal curve.   Therefore for $g\in  W_2$, using the chain rule and the length formula for horizontal Lipschitz curves, we have
\begin{align*}
d(\hat{f}\circ\ell_g\circ\ga(0),\hat{f}\circ\ell_g\circ\ga(1))&\leq \int_0^1 \|(\hat{f}\circ\ell_g\circ\ga)'(t)\|\,dt\\
&= \int_0^1 \|(D\hat{f}(\ell_g\circ\ga)(t))(\ga'(t))\|\,dt\\
&\leq \int_0^1L\|\ga'(t)\|dt\\
&=L\cdot\length(\ga)\,.
\end{align*}
Choosing a sequence $g_k\in  W_2$ with $g_k\ra e$ gives 
$$
d(f(\ga(0)),f(\ga(1)))\leq L\cdot\length(\ga).
$$
Since $\ga$ is arbitrary, this gives (6).  

Let $Z$ be the set of points $x\in \R^2$ such that $Df$ is differentiable at $x$ and there exists $\hat x\in \pi^{-1}(x)\subset\H$ such that $\hat f$ is Pansu differentiable at $\hat x$.  By (4) it follows that $\hat f$ is Pansu differentiable at every point in $\pi^{-1}(x)$ when $x\in Z$.  Now (7) follows from the chain rule for Pansu differentials. 
\end{proof}

\begin{proof}[Proof of Corollary~\ref{cor_heisenberg_example}]
Let $f:\R\times\R\ra\R\times\R$ be as in Corollary~\ref{cor_global_nosplit}.  We may apply Lemma~\ref{lem_lifting} to $f$ and $f^{-1}$, obtaining $d_{CC}$-Lipschitz lifts $\hat f,\widehat {f^{-1}}:\H\ra\H$.  By Lemma~\ref{lem_lifting}(5) we may choose the lifts so that $\widehat {f^{-1}}=(\hat f)^{-1}$; hence both $\hat f$ and $(\hat f)^{-1}$ are $d_{CC}$-bi-Lipschitz.  The remaining assertions in Corollary~\ref{cor_heisenberg_example} follow from Lemma~\ref{lem_lifting} and the properties of $f$ in Corollary~\ref{cor_global_nosplit}.  
\end{proof}

\bigskip
\begin{remark}
\label{rem_f1_f2_rigidity}
For $i\in \{1,2\}$ let $\f_i$ be the foliation of $\H$ defined by the left invariant vector field $X_i$, so the leaves of $\f_i$ are left cosets $g\exp\R X_i$ of the $1$-parameter subgroup $\exp\R X_i$.
Suppose $F:\H\ra \H$ is a bi-Lipschitz homeomorphism preserving the foliations $\f_i$ for $i\in \{1,2\}$, i.e. for $i\in \{1,2\}$ and every $g\in \H$, the image of the left coset $g\exp\R X_i$ is a a left coset $g'\exp\R X_i$ for $g'=g'(g)$.  It follows that $F$ arises from a projective transformation, see \cite{kmx_iwasawa}; in particular, if $F$ arises a lift of a bi-Lipschitz homeomorphism $f:\R^2\ra\R^2$ as in Lemma~\ref{lem_lifting}, then $f$ is split and affine.

\end{remark}

\bibliographystyle{amsalpha}
\bibliography{product_quotient}

\end{document}

%% file: macros_kmx.tex
\usepackage{amsmath,amssymb,amsthm,graphicx,color,amscd}
\usepackage[usenames,dvipsnames]{xcolor}
\usepackage{enumitem}

\usepackage{hyperref}

\numberwithin{equation}{section}
\hypersetup{bookmarksdepth=3}

\theoremstyle{plain}
\newtheorem{theorem}[equation]{Theorem}

\newtheorem{corollary}[equation]{Corollary}

\newtheorem{proposition}[equation]{Proposition}
\newtheorem{lemma}[equation]{Lemma}

\theoremstyle{definition}
\newtheorem{definition}[equation]{Definition}
\newtheorem{example}[equation]{Example}

\newtheorem{question}[equation]{Question}

\theoremstyle{remark}
\newtheorem{remark}[equation]{Remark}

\newcommand{\al}{\alpha}

\newcommand{\be}{\beta}
\newcommand{\ben}{\begin{enumerate}}
\newcommand{\bit}{\begin{itemize}}
\newcommand{\C}{\mathbb{C}}

\newcommand{\cof}{\operatorname{cof}}

\newcommand{\dist}{\operatorname{dist}}
\newcommand{\een}{\end{enumerate}}
\newcommand{\eit}{\end{itemize}}

\newcommand{\eps}{\varepsilon}

\newcommand{\fh}{\mathfrak{h}}

\newcommand{\f}{\mathcal{F}}
\newcommand{\ga}{\gamma}

\renewcommand{\H}{\mathbb{H}}

\newcommand{\length}{\operatorname{length}}

\newcommand{\loc}{\operatorname{loc}}
\newcommand{\lra}{\longrightarrow}

\newcommand{\Om}{\Omega}

\newcommand{\R}{\mathbb{R}}
\newcommand{\ra}{\rightarrow}

\newcommand{\rank}{\operatorname{rank}}
\definecolor{gray}{gray}{0.7}

\newcommand{\restr}{\mbox{\Large \(|\)\normalsize}}

\newcommand{\Span}{\operatorname{span}}

\renewcommand{\th}{\theta}

\newcommand{\we}{\wedge}

\def\XXint#1#2#3{{\setbox0=\hbox{$#1{#2#3}{\int}$ }
\vcenter{\hbox{$#2#3$ }}\kern-.6\wd0}}


%% file: product_quotient.bib
@book {ahlfors66,
    AUTHOR = {Ahlfors, Lars V.},
     TITLE = {Lectures on quasiconformal mappings},
    SERIES = {Van Nostrand Mathematical Studies},
    VOLUME = {No. 10},
      NOTE = {Manuscript prepared with the assistance of Clifford J. Earle,
              Jr},
 PUBLISHER = {D. Van Nostrand Co., Inc., Toronto, Ont.-New York-London},
      YEAR = {1966},
     PAGES = {v+146},
   MRCLASS = {30.47},
  MRNUMBER = {200442},
MRREVIEWER = {P.\ Caraman},
}

@incollection {MR4944781,
    AUTHOR = {R{\"u}land, Angkana},
     TITLE = {Microstructures in the modelling of shape-memory alloys:
              rigidity, flexibility and scaling},
 BOOKTITLE = {Variational and {PDE} methods in nonlinear science},
    SERIES = {Lecture Notes in Math.},
    VOLUME = {2366},
     PAGES = {83--144},
 PUBLISHER = {Springer, Cham},
      YEAR = {[2025] \copyright 2025},
      ISBN = {978-3-031-87201-3; 978-3-031-87202-0},
   MRCLASS = {74M25},
  MRNUMBER = {4944781},
       DOI = {10.1007/978-3-031-87202-0\_2},
       URL = {https://doi.org/10.1007/978-3-031-87202-0_2},
}

@book{scheffer74,
	author = {Scheffer, Vladimir},
	mrclass = {Thesis},
	mrnumber = {2624766},
	note = {Thesis (Ph.D.)--Princeton University},
	pages = {116},
	publisher = {ProQuest LLC, Ann Arbor, MI},
	title = {Regularity and irregularity of solutions to nonlinear second--order elliptic systems of partial differential equations and inequalities},
	url = {http://gateway.proquest.com/openurl?url_ver=Z39.88-2004&rft_val_fmt=info:ofi/fmt:kev:mtx:dissertation&res_dat=xri:pqdiss&rft_dat=xri:pqdiss:7506676},
	year = {1974}}

@article {kmsx_counterexample,
    AUTHOR = {Kleiner, Bruce and M\"uller, Stefan and Sz\'ekelyhidi, Jr.,
              L\'aszl\'o{} and Xie, Xiangdong},
     TITLE = {Rigidity of {E}uclidean product structure: breakdown for low
              {S}obolev exponents},
   JOURNAL = {Commun. Pure Appl. Anal.},
  FJOURNAL = {Communications on Pure and Applied Analysis},
    VOLUME = {23},
      YEAR = {2024},
    NUMBER = {10},
     PAGES = {1569--1607},
      ISSN = {1534-0392,1553-5258},
   MRCLASS = {35R70 (30C62 30C65 35A35 46E35)},
  MRNUMBER = {4799456},
       DOI = {10.3934/cpaa.2024029},
       URL = {https://doi.org/10.3934/cpaa.2024029},
}

@incollection {CDS2012,
    AUTHOR = {Conti, Sergio and De Lellis, Camillo and Sz\'ekelyhidi, Jr.,
              L\'aszl\'o},
     TITLE = {{$h$}-principle and rigidity for {$C^{1,\alpha}$} isometric
              embeddings},
 BOOKTITLE = {Nonlinear partial differential equations},
    SERIES = {Abel Symp.},
    VOLUME = {7},
     PAGES = {83--116},
 PUBLISHER = {Springer, Heidelberg},
      YEAR = {2012},
      ISBN = {978-3-642-25360-7; 978-3-642-25361-4},
   MRCLASS = {53C24 (58A07)},
  MRNUMBER = {3289360},
MRREVIEWER = {Toru\ Yoshiyasu},
       DOI = {10.1007/978-3-642-25361-4\_5},
       URL = {https://doi.org/10.1007/978-3-642-25361-4_5},
}

@article{szekelyhidi_forster18,
	author = {F\"{o}rster, Clemens and Sz\'{e}kelyhidi, Jr., L\'{a}szl\'{o}},
	doi = {10.1007/s00526-017-1293-7},
	fjournal = {Calculus of Variations and Partial Differential Equations},
	issn = {0944-2669},
	journal = {Calc. Var. Partial Differential Equations},
	mrclass = {26B25 (49K21)},
	mrnumber = {3740399},
	mrreviewer = {Ernesto Aranda},
	number = {1},
	pages = {Paper No. 19, 12},
	title = {{$T_5$}-configurations and non-rigid sets of matrices},
	url = {https://mathscinet.ams.org/mathscinet-getitem?mr=3740399},
	volume = {57},
	year = {2018}}

@conference{murat77,
	author = {Murat, F.},
	title = {H-convergence, {S}{\'e}minaire d'analyse fonctionelle et num{\'e}rique, {Universit\'e d'Alger}, 1977--78. { \rm English translation: Murat, F. \& Tartar, L.}, {H}-convergence, { \rm Topics in the mathematical modelling of composite materials, 21--43, {P}rogr. {N}onlinear {D}ifferential {E}quations {A}ppl. , 31, {B}irkh{\"a}user, {B}oston, {MA,} 1997. MR 1493039}},
	year = {1977}}

@incollection{tartar93,
	author = {Tartar, Luc},
	booktitle = {Microstructure and phase transition},
	doi = {10.1007/978-1-4613-8360-4_12},
	mrclass = {49N60 (49J10 73C50 73V25)},
	mrnumber = {1320538},
	mrreviewer = {John M. Ball},
	pages = {191--204},
	publisher = {Springer, New York},
	series = {IMA Vol. Math. Appl.},
	title = {Some remarks on separately convex functions},
	url = {https://mathscinet.ams.org/mathscinet-getitem?mr=1320538},
	volume = {54},
	year = {1993}}

@article{casadio_tarabusi93,
	author = {Casadio Tarabusi, Enrico},
	fjournal = {Ricerche di Matematica},
	issn = {0035-5038},
	journal = {Ricerche Mat.},
	mrclass = {49N15 (26B25 73C50 73V25)},
	mrnumber = {1283802},
	mrreviewer = {Jean-Pierre Crouzeix},
	number = {1},
	pages = {11--24},
	title = {An algebraic characterization of quasi-convex functions},
	url = {https://mathscinet.ams.org/mathscinet-getitem?mr=1283802},
	volume = {42},
	year = {1993}}

@article{nesi_milton91,
	author = {Nesi, Vincenzo and Milton, Graeme W.},
	doi = {10.1016/0022-5096(91)90039-Q},
	fjournal = {Journal of the Mechanics and Physics of Solids},
	issn = {0022-5096},
	journal = {J. Mech. Phys. Solids},
	mrclass = {73R05 (73B99 73K20 73V25 78A25 82D25)},
	mrnumber = {1106125},
	mrreviewer = {Philippe Boulanger},
	number = {4},
	pages = {525--542},
	title = {Polycrystalline configurations that maximize electrical resistivity},
	url = {https://mathscinet.ams.org/mathscinet-getitem?mr=1106125},
	volume = {39},
	year = {1991}}

@article{aumann_hart86,
	author = {Aumann, Robert J. and Hart, Sergiu},
	doi = {10.1007/BF02764940},
	fjournal = {Israel Journal of Mathematics},
	issn = {0021-2172},
	journal = {Israel J. Math.},
	mrclass = {90D15 (52A05 90D20)},
	mrnumber = {852476},
	mrreviewer = {Erwin Klein},
	number = {2},
	pages = {159--180},
	title = {Bi-convexity and bi-martingales},
	url = {https://mathscinet.ams.org/mathscinet-getitem?mr=852476},
	volume = {54},
	year = {1986}}

@article{szekelyhidi04,
	author = {Sz{\'{e}}kelyhidi, Jr., L\'{a}szl\'{o}},
	doi = {10.1007/s00205-003-0300-7},
	fjournal = {Archive for Rational Mechanics and Analysis},
	issn = {0003-9527},
	journal = {Arch. Ration. Mech. Anal.},
	mrclass = {49N60 (35J50 49J10 74G40 74G65)},
	mrnumber = {2048569},
	mrreviewer = {Marc Oliver Rieger},
	number = {1},
	pages = {133--152},
	title = {The regularity of critical points of polyconvex functionals},
	url = {https://mathscinet.ams.org/mathscinet-getitem?mr=2048569},
	volume = {172},
	year = {2004}}

@incollection{tartar15,
	author = {Tartar, Luc},
	booktitle = {Differential geometry and continuum mechanics},
	doi = {10.1007/978-3-319-18573-6_1},
	mrclass = {74-01 (53Z05)},
	mrnumber = {3447482},
	pages = {3--26},
	publisher = {Springer, Cham},
	series = {Springer Proc. Math. Stat.},
	title = {Compensated compactness with more geometry},
	url = {https://mathscinet.ams.org/mathscinet-getitem?mr=3447482},
	volume = {137},
	year = {2015}}

@article{murat78,
	author = {Murat, Fran\c{c}ois},
	fjournal = {Annali della Scuola Normale Superiore di Pisa. Classe di Scienze. Serie IV},
	issn = {0391-173X},
	journal = {Ann. Scuola Norm. Sup. Pisa Cl. Sci. (4)},
	mrclass = {46E35 (49A99)},
	mrnumber = {506997},
	mrreviewer = {Pierre-Louis Lions},
	number = {3},
	pages = {489--507},
	title = {Compacit\'{e} par compensation},
	url = {https://mathscinet.ams.org/mathscinet-getitem?mr=506997},
	volume = {5},
	year = {1978}}

@article{astala_faraco_szekelyhidi08,
	author = {Astala, Kari and Faraco, Daniel and Sz\'{e}kelyhidi, Jr., L\'{a}szl\'{o}},
	fjournal = {Annali della Scuola Normale Superiore di Pisa. Classe di Scienze. Serie V},
	issn = {0391-173X},
	journal = {Ann. Sc. Norm. Super. Pisa Cl. Sci. (5)},
	mrclass = {35J25 (30C62 35B65 35D10)},
	mrnumber = {2413671},
	mrreviewer = {Leonid V. Kovalev},
	number = {1},
	pages = {1--50},
	title = {Convex integration and the {$L^p$} theory of elliptic equations},
	url = {https://mathscinet.ams.org/mathscinet-getitem?mr=2413671},
	volume = {7},
	year = {2008}}

@article{sychev01,
	author = {Sychev, M. A.},
	doi = {10.1007/PL00009929},
	fjournal = {Calculus of Variations and Partial Differential Equations},
	issn = {0944-2669},
	journal = {Calc. Var. Partial Differential Equations},
	mrclass = {35R70 (49K24)},
	mrnumber = {1861098},
	mrreviewer = {Tzanko D. Donchev},
	number = {2},
	pages = {213--229},
	title = {Comparing two methods of resolving homogeneous differential inclusions},
	url = {https://mathscinet.ams.org/mathscinet-getitem?mr=1861098},
	volume = {13},
	year = {2001}}

@misc{kirchheim03,
	author = {Kirchheim, Bernd},
	howpublished = {Habilitation thesis, University of Leipzig, https://www.mis.mpg.de/preprints/ln/lecturenote-1603.pdf},
	title = {Analysis and geometry of microstructure},
	year = {2003}}

@book{dacorogna_book08,
	author = {Dacorogna, Bernard},
	edition = {Second},
	isbn = {978-0-387-35779-9},
	mrclass = {49-02 (49J10 49J45 74B20)},
	mrnumber = {2361288},
	mrreviewer = {Pietro Celada},
	pages = {xii+619},
	publisher = {Springer, New York},
	series = {Applied Mathematical Sciences},
	title = {Direct methods in the calculus of variations},
	url = {https://mathscinet.ams.org/mathscinet-getitem?mr=2361288},
	volume = {78},
	year = {2008}}

@incollection{muller_sverak96,
	author = {M\"{u}ller, Stefan and {\v{S}}ver\'{a}k, Vladimir},
	booktitle = {Geometric analysis and the calculus of variations},
	mrclass = {49J10 (58C35 73B30 73V25)},
	mrnumber = {1449410},
	mrreviewer = {U. D'Ambrosio},
	pages = {239--251},
	publisher = {Int. Press, Cambridge, MA},
	title = {Attainment results for the two-well problem by convex integration},
	url = {https://mathscinet.ams.org/mathscinet-getitem?mr=1449410},
	year = {1996}}

@article{fonseca_muller_pedregal98,
	author = {Fonseca, Irene and M\"{u}ller, Stefan and Pedregal, Pablo},
	doi = {10.1137/S0036141096306534},
	fjournal = {SIAM Journal on Mathematical Analysis},
	issn = {0036-1410},
	journal = {SIAM J. Math. Anal.},
	mrclass = {49J45 (35B99)},
	mrnumber = {1617712},
	mrreviewer = {Georg K. Dolzmann},
	number = {3},
	pages = {736--756},
	title = {Analysis of concentration and oscillation effects generated by gradients},
	url = {https://mathscinet.ams.org/mathscinet-getitem?mr=1617712},
	volume = {29},
	year = {1998}}

@article{muller_sverak_constraint,
	author = {M\"{u}ller, S. and {\v{S}}ver\'{a}k, V.},
	doi = {10.1007/s100970050012},
	fjournal = {Journal of the European Mathematical Society (JEMS)},
	issn = {1435-9855},
	journal = {J. Eur. Math. Soc. (JEMS)},
	mrclass = {35J60 (35D10 35J20 49J10 74N05)},
	mrnumber = {1728376},
	mrreviewer = {Jean-Pierre Raymond},
	number = {4},
	pages = {393--422},
	title = {Convex integration with constraints and applications to phase transitions and partial differential equations},
	url = {https://mathscinet.ams.org/mathscinet-getitem?mr=1728376},
	volume = {1},
	year = {1999}}

@article{szekelyhidi05,
	author = {Sz{\'{e}}kelyhidi, Jr., L\'{a}szl\'{o}},
	doi = {10.1007/s00526-004-0272-y},
	fjournal = {Calculus of Variations and Partial Differential Equations},
	issn = {0944-2669},
	journal = {Calc. Var. Partial Differential Equations},
	mrclass = {49J45 (52A30 74G65 74N15)},
	mrnumber = {2118899},
	mrreviewer = {Karim Trabelsi},
	number = {3},
	pages = {253--281},
	title = {Rank-one convex hulls in {$\Bbb R^{2\times 2}$}},
	url = {https://mathscinet.ams.org/mathscinet-getitem?mr=2118899},
	volume = {22},
	year = {2005}}

@article{ball_invertibility81,
	author = {Ball, J. M.},
	doi = {10.1017/S030821050002014X},
	fjournal = {Proceedings of the Royal Society of Edinburgh. Section A. Mathematics},
	issn = {0308-2105},
	journal = {Proc. Roy. Soc. Edinburgh Sect. A},
	mrclass = {73C50 (26B99 58C15)},
	mrnumber = {616782},
	mrreviewer = {R. R. Huilgol},
	number = {3-4},
	pages = {315--328},
	title = {Global invertibility of {S}obolev functions and the interpenetration of matter},
	url = {https://mathscinet.ams.org/mathscinet-getitem?mr=616782},
	volume = {88},
	year = {1981}}

@misc{kmx_approximation_low_p,
	author = {Kleiner, B. and M{\"{u}}ller, S. and Xie, X.},
	year={2021},
    eprint={2007.06694},
    archivePrefix={arXiv},
    primaryClass={math.DG},
    howpublished={https://arxiv.org/abs/2007.06694}, 
	title = {Pansu pullback and exterior differentiation for {S}obolev maps on {C}arnot groups}}

@misc{KMX1new,
      title={Pansu pullback and rigidity of mappings between Carnot groups}, 
      author={Kleiner, B. and M{\"{u}}ller, S. and Xie, X.},
      year={2021},
      eprint={2004.09271},
      archivePrefix={arXiv},
      primaryClass={math.DG},
      howpublished={https://arxiv.org/abs/2004.09271}, 
}

@article {kmx_iwasawa,
    AUTHOR = {Kleiner, B. and M\"uller, S. and Xie, X.},
     TITLE = {Rigidity of flag manifolds},
   JOURNAL = {Indiana Univ. Math. J.},
  FJOURNAL = {Indiana University Mathematics Journal},
    VOLUME = {74},
      YEAR = {2025},
    NUMBER = {1},
     PAGES = {195--224},
      ISSN = {0022-2518,1943-5258},
   MRCLASS = {53C24 (30C65 53C30)},
  MRNUMBER = {4883589},
MRREVIEWER = {Irina\ G.\ Markina},
}

@article{pansu,
	author = {Pansu, P.},
	doi = {10.2307/1971484},
	fjournal = {Annals of Mathematics. Second Series},
	issn = {0003-486X},
	journal = {Ann. of Math. (2)},
	mrclass = {53C20 (22E40)},
	mrnumber = {979599},
	mrreviewer = {Gudlaugur Thorbergsson},
	number = {1},
	pages = {1--60},
	title = {M\'{e}triques de {C}arnot-{C}arath\'{e}odory et quasiisom\'{e}tries des espaces sym\'{e}triques de rang un},
	url = {https://mathscinet.ams.org/mathscinet-getitem?mr=979599},
	volume = {129},
	year = {1989}}

@article{buckmaster_vicol19,
	author = {Buckmaster, T. and Vicol, V.},
	doi = {10.4007/annals.2019.189.1.3},
	fjournal = {Annals of Mathematics. Second Series},
	issn = {0003-486X},
	journal = {Ann. of Math. (2)},
	mrclass = {35Q30 (35Q31 35Q35 76D05 76F02)},
	mrnumber = {3898708},
	mrreviewer = {Isabelle Gruais},
	number = {1},
	pages = {101--144},
	title = {Nonuniqueness of weak solutions to the {N}avier-{S}tokes equation},
	url = {https://doi.org/10.4007/annals.2019.189.1.3},
	volume = {189},
	year = {2019}}

@article{isett18,
	author = {Isett, P.},
	doi = {10.4007/annals.2018.188.3.4},
	fjournal = {Annals of Mathematics. Second Series},
	issn = {0003-486X},
	journal = {Ann. of Math. (2)},
	mrclass = {35Q31 (35A02 35D30 76B03 76F02 76F05)},
	mrnumber = {3866888},
	mrreviewer = {Benedetta Ferrario},
	number = {3},
	pages = {871--963},
	title = {A proof of {O}nsager's conjecture},
	url = {https://doi.org/10.4007/annals.2018.188.3.4},
	volume = {188},
	year = {2018}}

@article{delellis_szekelyhidi16,
	author = {De Lellis, C. and Sz\'{e}kelyhidi, Jr., L.},
	doi = {10.1090/bull/1549},
	fjournal = {American Mathematical Society. Bulletin. New Series},
	issn = {0273-0979},
	journal = {Bull. Amer. Math. Soc. (N.S.)},
	mrclass = {35Q31 (35A01 35D30 53A99 53C21 76F02)},
	mrnumber = {3619726},
	mrreviewer = {Benedetta Ferrario},
	number = {2},
	pages = {247--282},
	title = {High dimensionality and h-principle in {PDE}},
	url = {https://doi.org/10.1090/bull/1549},
	volume = {54},
	year = {2017}}

@article{delellis_szekelyhidi09,
	author = {De Lellis, C. and Sz\'{e}kelyhidi, Jr., L.},
	doi = {10.4007/annals.2009.170.1417},
	fjournal = {Annals of Mathematics. Second Series},
	issn = {0003-486X},
	journal = {Ann. of Math. (2)},
	mrclass = {35Q31 (34A60 35D30 76B03)},
	mrnumber = {2600877},
	mrreviewer = {Fr\'{e}d\'{e}ric Charve},
	number = {3},
	pages = {1417--1436},
	title = {The {E}uler equations as a differential inclusion},
	url = {https://doi.org/10.4007/annals.2009.170.1417},
	volume = {170},
	year = {2009}}

@book{gromov_pdr,
	author = {Gromov, M.},
	doi = {10.1007/978-3-662-02267-2},
	isbn = {3-540-12177-3},
	mrclass = {58G99 (35A99 35B99 53C42 58-02)},
	mrnumber = {864505},
	mrreviewer = {Hung-Hsi Wu},
	pages = {x+363},
	publisher = {Springer-Verlag, Berlin},
	series = {Ergebnisse der Mathematik und ihrer Grenzgebiete (3) [Results in Mathematics and Related Areas (3)]},
	title = {Partial differential relations},
	url = {https://doi.org/10.1007/978-3-662-02267-2},
	volume = {9},
	year = {1986}}

@article{muller_sverak03,
	author = {M\"{u}ller, S. and {\v{S}}ver\'{a}k, V.},
	doi = {10.4007/annals.2003.157.715},
	fjournal = {Annals of Mathematics. Second Series},
	issn = {0003-486X},
	journal = {Ann. of Math. (2)},
	mrclass = {35D10 (35J45 35J50 49J10 49N60)},
	mrnumber = {1983780},
	mrreviewer = {John M. Ball},
	number = {3},
	pages = {715--742},
	title = {Convex integration for {L}ipschitz mappings and counterexamples to regularity},
	url = {https://doi.org/10.4007/annals.2003.157.715},
	volume = {157},
	year = {2003}}

@incollection{muller99,
	author = {M{\"{u}}ller, S.},
	booktitle = {Calculus of variations and geometric evolution problems ({C}etraro, 1996)},
	doi = {10.1007/BFb0092670},
	mrclass = {49J45 (35A15 58E50 74B20 74N15)},
	mrnumber = {1731640},
	mrreviewer = {John M. Ball},
	pages = {85--210},
	publisher = {Springer, Berlin},
	series = {Lecture Notes in Math.},
	title = {Variational models for microstructure and phase transitions},
	url = {https://doi.org/10.1007/BFb0092670},
	volume = {1713},
	year = {1999}}

@book{dacorogna_marcellini99,
	author = {Dacorogna, B. and Marcellini, P.},
	doi = {10.1007/978-1-4612-1562-2},
	isbn = {0-8176-4121-1},
	mrclass = {35A25 (35F20 35G20)},
	mrnumber = {1702252},
	mrreviewer = {Jolanta Przybycin},
	pages = {xiv+273},
	publisher = {Birkh\"{a}user Boston, Inc., Boston, MA},
	series = {Progress in Nonlinear Differential Equations and their Applications},
	title = {Implicit partial differential equations},
	url = {https://doi.org/10.1007/978-1-4612-1562-2},
	volume = {37},
	year = {1999}}

@article{scheffer93,
	author = {Scheffer, V.},
	doi = {10.1007/BF02921318},
	fjournal = {The Journal of Geometric Analysis},
	issn = {1050-6926},
	journal = {J. Geom. Anal.},
	mrclass = {35Q35 (28A80 76C99)},
	mrnumber = {1231007},
	mrreviewer = {Helena J. Nussenzveig Lopes},
	number = {4},
	pages = {343--401},
	title = {An inviscid flow with compact support in space-time},
	url = {https://doi.org/10.1007/BF02921318},
	volume = {3},
	year = {1993}}

@article{murat81,
	author = {Murat, F.},
	fjournal = {Annali della Scuola Normale Superiore di Pisa. Classe di Scienze. Serie IV},
	issn = {0391-173X},
	journal = {Ann. Scuola Norm. Sup. Pisa Cl. Sci. (4)},
	mrclass = {46E35 (35B99)},
	mrnumber = {616901},
	mrreviewer = {John M. Ball},
	number = {1},
	pages = {69--102},
	title = {Compacit\'{e} par compensation: condition n\'{e}cessaire et suffisante de continuit\'{e} faible sous une hypoth\`ese de rang constant},
	url = {http://www.numdam.org/item?id=ASNSP_1981_4_8_1_69_0},
	volume = {8},
	year = {1981}}

@incollection{tartar79,
	author = {Tartar, L.},
	booktitle = {Nonlinear analysis and mechanics: {H}eriot-{W}att {S}ymposium, {V}ol. {IV}},
	mrclass = {35A35 (35Q99 49A50 49D50)},
	mrnumber = {584398},
	mrreviewer = {R. Schumann},
	pages = {136--212},
	publisher = {Pitman, Boston, Mass.-London},
	series = {Res. Notes in Math.},
	title = {Compensated compactness and applications to partial differential equations},
	volume = {39},
	year = {1979}}

@article{nash54,
	author = {Nash, J.},
	doi = {10.2307/1969840},
	fjournal = {Annals of Mathematics. Second Series},
	issn = {0003-486X},
	journal = {Ann. of Math. (2)},
	mrclass = {53.0X},
	mrnumber = {65993},
	mrreviewer = {S. Chern},
	pages = {383--396},
	title = {{$C^1$} isometric imbeddings},
	url = {https://doi.org/10.2307/1969840},
	volume = {60},
	year = {1954}}

@article{Sverak:1993va,
author = {{\v{S}}ver{\'a}k, Vladimir},
title = {{On Tartar's conjecture}},
journal = {Annales de l'Institut Henri Poincare/Analyse non lineaire},
year = {1993},
volume = {10},
number = {4},
pages = {405--412}
}

@article{Lorent:2019kv,
author = {Lorent, Andrew and Peng, Guanying},
title = {{Null Lagrangian measures in subspaces, compensated compactness and conservation laws}},
journal = {Arch. Rational Mech. Anal.},
year = {2019},
volume = {234},
number = {2},
pages = {857--910}
}

@article{Ball:1980fy,
author = {Ball, John M},
title = {{Strict convexity, strong ellipticity, and regularity in the calculus of variations}},
journal = {Math. Proc. Camb. Phil. Soc.},
year = {1980},
volume = {87},
number = {3},
pages = {501--513}
}

@article{Chlebik:2002va,
author = {Chlebik, Miroslav and Kirchheim, Bernd},
title = {{Rigidity for the four gradient problem}},
journal = {J. Reine Angew. Math.},
year = {2002},
volume = {2002},
number = {551},
pages = {1--9}
}

@incollection{Kirchheim:2002wc,
author = {Kirchheim, Bernd and {\v{S}}ver{\'a}k, Vladimir and M{\"u}ller, Stefan},
title = {{Studying nonlinear pde by geometry in matrix space}},
booktitle = {Geometric analysis and nonlinear partial differential equations},
year = {2003},
pages = {347--395},
publisher = {Springer, Berlin}
}

@incollection{SzekelyhidiJr:2014tu,
author = {Sz{\'e}kelyhidi Jr, L{\'a}szl{\'o}},
title = {{From Isometric Embeddings to Turbulence}},
booktitle = {HCDTE Lecture Notes. Part II. Nonlinear Hyperbolic PDEs, Dispersive and Transport Equations},
year = {2014},
pages = {1--66},
publisher = {American Institute of Mathematical Sciences}
}

@article{Buckmaster:2019et,
author = {Buckmaster, Tristan and Vicol, Vlad},
title = {{Convex integration and phenomenologies in turbulence}},
journal = {EMS Surv. Math. Sci.},
year = {2019},
volume = {6},
number = {1},
pages = {173--263}
}

@article {LeonettiNesi,
    AUTHOR = {Leonetti, F. and Nesi, V.},
     TITLE = {Quasiconformal solutions to certain first order systems and
              the proof of a conjecture of {G}. {W}. {M}ilton},
   JOURNAL = {J. Math. Pures Appl. (9)},
  FJOURNAL = {Journal de Math\'ematiques Pures et Appliqu\'ees. Neuvi\`eme
              S\'erie},
    VOLUME = {76},
      YEAR = {1997},
    NUMBER = {2},
     PAGES = {109--124},
      ISSN = {0021-7824},
   MRCLASS = {35J15 (30C62 35F05)},
  MRNUMBER = {1432370},
       DOI = {10.1016/S0021-7824(97)89947-3},
       URL = {https://doi.org/10.1016/S0021-7824(97)89947-3},
}

@article {BojarskiDOnofrioetal,
    AUTHOR = {Bojarski, Bogdan and D'Onofrio, Luigi and Iwaniec, Tadeusz and
              Sbordone, Carlo},
     TITLE = {{$G$}-closed classes of elliptic operators in the complex
              plane},
   JOURNAL = {Ricerche Mat.},
  FJOURNAL = {Ricerche di Matematica},
    VOLUME = {54},
      YEAR = {2005},
    NUMBER = {2},
     PAGES = {403--432},
      ISSN = {0035-5038},
   MRCLASS = {30G20 (30C62 35J55)},
  MRNUMBER = {2289490},
MRREVIEWER = {M.\ Yu.\ Vasil\cprime chik},
}
